\newtheorem{theorem}{Theorem}[section]
\newtheorem{proposition}[theorem]{Proposition}
\newtheorem{lemma}[theorem]{Lemma}
\newtheorem{corollary}[theorem]{Corollary}
\newtheorem{definition}{Definition}
\newcommand{\E}{\mathcal{E}}
\numberwithin{equation}{section}
\begin{document}
	
	\title[Regularity theory for problems involving general stable operators]{Regularity theory for mixed local--nonlocal problem involving general stable operators}

	\author{Pedro Fellype Pontes}
	\author{Minbo Yang}

	\address[Pedro Fellype Silva Pontes]
	{\newline\indent Zhejiang Normal University
		\newline\indent
		School of Mathematical Sciences
		\newline\indent
		Jinhua 321004 -- People's Republic of China}
	\email{\href{fellype.pontes@gmail.com}{fellype.pontes@gmail.com}}
	
	\address[Minbo Yang]
	{\newline\indent Zhejiang Normal University
		\newline\indent
		School of Mathematical Sciences
		\newline\indent
		Jinhua 321004 -- People's Republic of China}
	\email{\href{mbyang@zjnu.edu.cn}{mbyang@zjnu.edu.cn}}
	
	\begin{abstract}

In this paper, we study the regularity of solutions to a linear elliptic equation involving a mixed local--nonlocal operator of the form
$$Lu - \operatorname{div}\big(a(x)\nabla u(x)\big)= f, \quad \text{in } \Omega \subset \mathbb{R}^n,$$
where $L$ is a general stable L\'{e}vy type operator and $a(\cdot)$ is a positive H\"{o}lder continuous weight. By establishing a maximum principle and a Liouville-type result in the entire space, we are able to derive the interior regularity and the regularity up to the boundary of the solutions under suitable assumptions on $f(x)$ and $a(x)$ . 

\medskip
\noindent \textbf{Keywords}: Mixed local--nonlocal operator, Interior regularity, Regularity up to the boundary, Stable L\'{e}vy process

\noindent \textbf{AMS Subject Classification:} 35B65; 35R11; 47G30; 60G52.
	\end{abstract}

	\thanks{Pedro Fellype Pontes was partially supported by NSFC (W2433017), and BSH (2024-002378) and Minbo Yang is the corresponding author who was partially supported by National Natural Science Foundation of China (12471114) and Natural Science Foundation of Zhejiang Province (LZ22A010001).}
	
\maketitle

\section{Introduction}	
In this paper, we are interested in the regularity of solutions to a mixed local and nonlocal problem. More precisely, we are going to establish regularity estimates for weak solutions to the equation
	\[
\E u\coloneqq Lu - \operatorname{div}\big(a(x)\nabla u(x)\big)= f, \quad \text{in } \Omega \subset \mathbb{R}^n. \tag{P}
	\]  
where the general stable operator $L$ is the infinitesimal generator of a symmetric stable L\'{e}vy process, defined by
	\begin{equation}\label{defL}
		Lu(x)= \int_{S^{n-1}} \int_{-\infty}^{+\infty} \dfrac{u(x+\theta r)+u(x-\theta r)-2u(x)}{|r|^{1+2s}}\ dr \ d\mu(\theta),
	\end{equation}
$\mu$ is a nonnegative finite measure on the unit sphere referred to as the \textit{spectral measure} and $s \in (0,1)$ and $a(\cdot) \in C_{\text{loc}}^{0,\alpha}(\mathbb{R}^n)$, for some $\alpha \in (0,1)$, satisfying
\begin{equation}\label{conda}
	0< a^- \le a(x) \le a^+, \quad x \in \mathbb{R}^n.
\end{equation}
Throughout our analysis, we also assume the following ellipticity condition on the nonlocal operator $L$:
	\begin{equation}\label{condL}
		0<\lambda_1 \le \inf_{\nu \in S^{n-1}} \int_{S^{n-1}} \vert \nu \cdot \theta \vert  \ d\mu(\theta), \; \mbox{and} \; \int_{S^{n-1}} d\mu \le \Lambda_1 < \infty.
	\end{equation}

\subsection{Background and Motivation}

Mixed local-nonlocal differential operators have recently attracted significant interest in mathematics, both for their rich theory and real-world applications. In general, one considers an operator of the form
	$$\mathcal{L} u = L_{\rm nonlocal}\ u + L_{\rm local} \ u,$$
where $L_{\rm nonlocal}$ is an integral operator (e.g.\ a fractional Laplacian or generator of a stable L\'{e}vy process) and $L_{\rm local}$ is a standard (often second-order) elliptic operator (e.g.\ a Laplacian or weighted Laplacian). Such mixed operators naturally arise when combining short-range diffusion and long-range jumps. Among others references, Biagi et al. \cite{BDVV1} note that superposing a Brownian diffusion (classical random walk) with a L\'{e}vy-flight jump process leads to exactly a Laplacian plus fractional Laplacian operator. This modeling perspective underlies phenomena like {\it animal foraging and population dispersal} (mixing local moves and rare long-range relocations) and {\it anomalous transport} in physics. For more details we refer the reader to the seminar paper of Dipierro and Valdinoci \cite{DV}. Mathematically, these operators pose new challenges: the interplay of local regularity and nonlocal scaling breaks standard elliptic invariance, and analysts must blend techniques for PDEs and integro-differential equations.

In recent years there has been an explosion of results on these operators. Biagi, Dipierro, Valdinoci, and Vecchi \cite{BDVV2} initiated a systematic study of the elliptic operator $-\Delta+(-\Delta)^s$. In that work they proved existence, uniqueness and regularity (interior Sobolev and boundary Lipschitz) for solutions of the mixed Dirichlet problem, as well as both weak and strong maximum principles. Next, Su, Valdinoci, Wei and Zhang \cite{SVWZ2} studied the problem
	$$	-\Delta u+(-\Delta)^s u= g(x,u),$$
in bounded domain, with $g$ satisfying suitable conditions, where they proved the interior $C^{2,\alpha}$-regularity for all $s \in (0,1)$, and the $C^{2,\alpha}$-regularity up to the boundary for all $s \in (0,\frac{1}{2})$.
For the parabolic case, Chen, Kim and Song \cite{CKS} were the first to derive sharp heat-kernel estimates for $\Delta + a^\alpha\Delta^{\alpha/2}$ in bounded domains, describing the fundamental solution of a Brownian motion with an independent symmetric $\alpha$-stable jump component. 
Furthermore, a substantial body of literature addresses mixed local--nonlocal problems, each with distinct features. Notable examples include classical works such as \cite{CKSV1,CKSV2,CK,F,BI}, along with recent developments found in \cite{GK,BLS,BDVV,DfM}.


Recently, the operator $L$ defined in \eqref{defL} has attracted considerable interest within the mathematical community, primarily due to its generality and the wide range of applications it encompasses. This operator extends several classical nonlocal operators and serves as a unifying framework for analyzing various nonlocal phenomena. In particular, operators of the form $L$ are central to models of {\it nomalous diffusion} in physics, where particles undergo L\'{e}vy-type jumps instead of classical Brownian motion. In fractional {\it quantum mechanics}, the standard Laplacian in the Schrödinger equation is replaced by a stable L\'{e}vy generator, giving rise to a fractional Schrödinger equation. Moreover, in {\it mathematical finance}, such operators appear in option pricing models involving jump processes (see, e.g., \cite{BNR,MS,N,ST} and the references therein).

To illustrate the flexibility of the operator $L$, let us discuss several notable special cases:

\begin{itemize}
	\item When the spectral measure is absolute continuous, i.e., $d\mu(\theta) = \mathfrak{a}(\theta)d\theta$, then theses operators can be written as
		$$L u(x) = \int_{\mathbb{R}^n} \dfrac{u(x+y)+u(x-y)-2u(x)}{|y|^{n+2s}} \mathfrak{a}\left(\dfrac{y}{|y|}\right) \ dy,$$
	where $\mathfrak{a}\in L^1(S^{n-1})$ is a nonnegative and even function.
	
	\item When $d \mu(\theta)=cd\theta$, with $c>0$, the operator $L$ turns into a multiple of the fractional Laplacian, i.e.,
		$$L u(x) = c \int_{\mathbb{R}^n} \dfrac{u(x+y)+u(x-y)-2u(x)}{|y|^{n+2s}}  \ dy.$$
		
	\item Consider now $X_t = (X_t^1, \dots, X_t^n)$, $X_t^i$ being independent symmetric stable process in dimension $1$. In this case, the spectral measure $\mu$ is the summation of Dirac measures
		$$\mu = \sum_{i=1}^n (\delta_{e_i}+ \delta_{-e_i}),$$
	on the unit orthogonal basis $\{e_i\}$. Then, the infinitesimal generators $L$ of $X_t$ coincides with
		$$\mathcal{I}u = \sum_{i=1}^n \mathcal{I}_i u,$$
	where
		$$\mathcal{I}_i u(x) = \dfrac{c}{2} \int_{-\infty}^{+\infty} \dfrac{u(x+ye_i)+u(x-ye_i)-2u(x)}{|y|^{1+2s}}\ dy.$$
\end{itemize}

There is now a substantial body of literature concerning problems driven by the operator $L$. Notable contributions include works by Bass \cite{B}, Bass and Chen \cite{BC}, Birindelli, Du and Galise \cite{BDG}, Bogdan and Sztonyk \cite{BS}, Caffarelli and Silvestre \cite{CS}, Du and Yang \cite{DY}, Fernández-Real and Ros-Oton \cite{FR}, and Ros-Oton and Serra \cite{RS}, among many others. Of particular importance is the seminal paper \cite{RS}, in which the authors developed sharp interior and boundary regularity estimates for solutions to equations involving general symmetric stable operators.
To be more precise, consider the equation $Lu = f$ in $B_1$ with bounded right-hand side. The main interior results in \cite{RS} establish the following estimates:
	\begin{itemize}
		\item If $f \in C^\beta(B_1)$, and $u \in C^\beta(\mathbb{R}^n)$ for some $\beta>0$, then
		$$\|u\|_{C^{2s+\beta}(B_{\frac{1}{2}})} \le C \Big(\|u\|_{C^\beta(\mathbb{R}^n)} + \|f\|_{C^\beta(B_1)}\Big),$$
		whenever $2s+\beta$ is not an integer.
		
		\item If $f \in L^{\infty}(B_1)$, and  $u \in L^{\infty}(\mathbb{R}^n)$ then
			$$\|u\|_{C^{2s}(B_{\frac{1}{2}})} \le C \Big(\|u\|_{L^\infty(\mathbb{R}^n)} + \|f\|_{L^\infty(B_1)}\Big), \quad \mbox{if} \ s \neq \dfrac{1}{2},$$
			and
			$$\|u\|_{C^{2s-\varepsilon}(B_{\frac{1}{2}})} \le C \Big(\|u\|_{L^\infty(\mathbb{R}^n)} + \|f\|_{L^\infty(B_1)}\Big), \quad \mbox{if} \ s = \dfrac{1}{2},$$
		for all $\varepsilon>0$;
	\end{itemize}
To establish the regularity results mentioned above, the authors of \cite{RS} employed an strategy that hinges on a combination of heat kernel techniques, Liouville-type theorems, and a blow-up analysis. Armed with this Liouville theorem, the authors carried out a compactness and contradiction argument: they assumed, for the sake of contradiction, that the desired H\"{o}lder regularity fails. Then, by rescaling and translating the sequence of solutions (i.e., constructing a blow-up sequence), they obtained a limiting function that is a global solution to a homogeneous equation driven by a translation-invariant operator. The Liouville theorem then forces this limit to be trivial, which contradicts the nontrivial behavior in the normalization of the blow-up sequence. This contradiction ultimately establishes the desired regularity estimates.

\subsection{Main results}

Motivated by the discussions and developments presented in the previous subsection, we are now led to investigate the regularity properties of solutions to the mixed local-nonlocal linear problem $\E u = f$, where the operator $\E$ is defined in $(\mathrm{P})$ with $L$ being the infinitesimal generator of a symmetric stable L\'{e}vy process, and $a(\cdot) \in C_{\text{loc}}^{0,\alpha}(\mathbb{R}^n)$ satisfying \eqref{conda}, for some $\alpha \in (0,1)$.


Before stating our main results, we begin by defining the notion of a weak solution to our problem:

\begin{definition}
	Given $f \in L^\infty(\Omega)$, we say that $u$ is a weak solution of
	$$\E u=f, \quad \mbox{in} \ \Omega \subset \mathbb{R}^n.$$
	if
	\begin{itemize}
		\item $\vert u(x)\vert \le C(1+\vert x \vert^{2s-\delta})$ in $\mathbb{R}^n$ for some $\delta>0$;
		
		\item $\displaystyle \int_{\mathbb{R}^n} u Lv \ dx + \int_{\mathbb{R}^n}a(x)\nabla u \cdot\nabla v \ dx = \int_{\Omega} fv \ dx,$ for any $v \in C_0^\infty(\Omega)$.
	\end{itemize}
\end{definition}

The first two results in the present paper are about the interior regularity. The main idea of our method inspired by \cite{RS} is to first establish a Liouville-type result for solutions to $\E u = 0$ in $\mathbb{R}^n$, under appropriate growth assumptions (see Theorem \ref{Liouville1}, and Corollary \ref{coroLiouville} below). This result serves as a crucial rigidity tool. The first regularity result can be read as follows.

\begin{theorem}\label{reg1}
	Let $\E$ be an operator as defined in $(\mathrm{P})$, where $a(\cdot) \in C_{\mathrm{loc}}^{0,\alpha}(\mathbb{R}^n)$, for some $\alpha \in (0,1)$, satisfying \eqref{conda}, and $L$ satisfies \eqref{defL} and \eqref{condL} with $s \in (0,1)$. Assume that $u$ is any bounded weak solutions to
	$$\E u = f \ \mbox{in} \ B_1$$
	with $f \in C^\gamma(B_1)$, for some $\gamma \in (0,1)$. Suppose that one of the conditions below is satisfied:
	\begin{enumerate}[$(a)$]
		\item If $\frac{1 - \alpha}{2} < s < 1$, assume that $\gamma \in (0,1)$ is such that $\lfloor 2s+\alpha+\gamma\rfloor \le 2$;
		
		\item If $0 < s \le \frac{1 - \alpha}{2}$, suppose in addition that $\gamma \in (0,1)$ is such that $2s + \gamma + \alpha \ge 1$.
	\end{enumerate}	
	If $u \in C^\gamma(\mathbb{R}^n)$, then
	$$\|u\|_{C^{2s+\alpha+\gamma}(B_{\frac{1}{2}})} \le C \Big(\|u\|_{C^\gamma(\mathbb{R}^n)} + \|f\|_{C^\gamma(B_1)} \Big),$$
	whenever $2s+\alpha+\gamma$ is not an integer.
\end{theorem}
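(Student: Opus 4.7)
The plan is to establish Theorem \ref{reg1} by a contradiction-and-blow-up strategy, in the spirit of Ros-Oton and Serra \cite{RS}, using the Liouville-type result (Theorem \ref{Liouville1} and Corollary \ref{coroLiouville}) as the rigidity input. I would suppose the desired H\"{o}lder estimate fails along a sequence of solutions, blow up around the point at which the $C^{2s+\alpha+\gamma}$-seminorm concentrates, extract a limiting profile $v_\infty$ defined on $\mathbb{R}^n$, and invoke Liouville to force this limit to be a polynomial that ultimately vanishes under the chosen normalization, producing a contradiction.

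More precisely, assume the estimate fails. Extract sequences $\{a_k\}\subset C^{0,\alpha}(\mathbb{R}^n)$ (satisfying \eqref{conda} with uniform bounds), $\{f_k\}\subset C^\gamma(B_1)$, and weak solutions $u_k$ of $\E_k u_k=f_k$ in $B_1$, normalized so that $\|u_k\|_{C^\gamma(\mathbb{R}^n)}+\|f_k\|_{C^\gamma(B_1)}\le 1$ while $[u_k]_{C^{2s+\alpha+\gamma}(B_{1/2})}\to\infty$. A standard ``largest-seminorm'' procedure locates points $z_k\in\overline{B_{1/2}}$ and scales $r_k\downarrow 0$ where the excess concentrates. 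Let $m:=\lfloor 2s+\alpha+\gamma\rfloor$, which satisfies $1\le m\le 2$ under either hypothesis (a) or (b), and let $P_k$ denote the degree-$m$ Taylor polynomial of $u_k$ at $z_k$. Define
\[
v_k(y)\;:=\;\frac{u_k(z_k+r_k y)-P_k(z_k+r_k y)}{M_k\, r_k^{2s+\alpha+\gamma}},\qquad M_k\to\infty,
\]
with $M_k$ chosen so that $[v_k]_{C^{2s+\alpha+\gamma}(B_1)}$ is bounded below while $|v_k(y)|\le C(1+|y|^{2s+\alpha+\gamma})$ on $\mathbb{R}^n$.

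A change-of-variables computation produces the rescaled equation
\[
r_k^{\,2-2s}\,Lv_k(y)\,-\,\operatorname{div}_y\!\bigl[\tilde a_k(y)\,\nabla v_k(y)\bigr]\;=\;\frac{r_k^{\,2-(2s+\alpha+\gamma)}}{M_k}\,\bigl[f_k(z_k+r_k y)-\E_k(P_k)(z_k+r_k y)\bigr],
\]
where $\tilde a_k(y):=a_k(z_k+r_k y)$. Since $s<1$, the coefficient $r_k^{2-2s}\to 0$, and the H\"{o}lder control on $a_k$, $f_k$ together with the Taylor choice of $P_k$ are designed to make the right-hand side vanish as $k\to\infty$. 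By Arzel\`{a}-Ascoli applied to $v_k$ and $\tilde a_k$, I would extract a locally uniform limit $v_k\to v_\infty$ with $\tilde a_k\to a(z_\infty)$; the limit $v_\infty$ is then a distributional solution on $\mathbb{R}^n$ of a translation-invariant equation that, after the scaling, reduces either to $-a(z_\infty)\,\Delta v_\infty=0$ or, in the finer regime preserving both parts, to a constant-coefficient mixed operator of the form covered by Corollary \ref{coroLiouville}. Either way, the Liouville theorem forces $v_\infty$ to be a polynomial of degree $\le m$, which the Taylor normalization at $z_k$ then annihilates, contradicting the lower bound on $[v_\infty]_{C^{2s+\alpha+\gamma}(B_1)}$.

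The principal obstacle will be this third step: controlling the rescaled right-hand side uniformly and proving genuine compactness of the blow-up sequence despite the disparate scalings of the nonlocal and local parts ($r_k^{2-2s}$ versus $r_k^{2-(2s+\alpha+\gamma)}$). The dichotomy of hypotheses (a) and (b) is precisely what permits a consistent choice of the polynomial degree $m$: hypothesis (a) caps $m$ at $2$ so that $L(P_k)$ retains a computable principal-value meaning, whereas hypothesis (b) enforces $2s+\alpha+\gamma\ge 1$ so that at least a linear Taylor piece of $u_k$ is available for subtraction. A particularly delicate point is estimating $\E_k(P_k)$ near $z_k$: since $P_k$ does not decay, $L(P_k)$ must be read in the principal-value sense dictated by \eqref{defL}, and $\operatorname{div}(a_k\nabla P_k)$ involves $\nabla a_k$, which lies only in a negative H\"{o}lder class. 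Showing that these contributions are absorbed into the normalization constant $M_k\,r_k^{2s+\alpha+\gamma}$, quantitatively using the $C^{0,\alpha}$ norm of $a_k$ and the $C^\gamma$ norm of $f_k$, will be the technical crux of the argument.
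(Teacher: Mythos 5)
Your proposal captures the high-level skeleton used by the paper---contradiction, blow-up, and Liouville---but two of the moves as you have written them would not survive scrutiny, and a third omission makes the argument incomplete.

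First, and most seriously, you apply $\E_k$ (and in particular $L_k$) directly to the rescaled function $v_k$, which requires making sense of $L_k(P_k)$ with $P_k$ a degree-$\lfloor 2s+\alpha+\gamma\rfloor$ Taylor polynomial. Under hypothesis (a) the floor can equal $2$; for a degree-two polynomial $P$ the second difference $\delta(P,x,\theta r)$ is a nonzero constant in $r$, so $\int_{|r|\le 1}\delta(P,x,\theta r)|r|^{-1-2s}\,dr$ diverges for every $s\in(0,1)$---there is no principal-value rescue because there is no sign cancellation, only a nonintegrable singularity. The paper sidesteps this entirely by never applying $\E_k$ to $v_m$ itself; Claim~3 of Proposition~\ref{prestep} applies $\E_{k_m}$ to the increment $v_m(\cdot+h)-v_m$, and the key algebraic identity \eqref{delta33}, $\delta(P,x+h,y)-\delta(P,x,y)=0$ for $\deg P\le 2$, makes the polynomial contribution vanish identically before any integration is performed. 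This is why the Liouville input has to be Corollary~\ref{coroLiouville} (an increment version of Theorem~\ref{Liouville1}), not a Liouville theorem for $v$ itself. Restructuring the blow-up around increments is not a small repair but a different organization of the proof, and it is essential.

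Second, you subtract the Taylor polynomial at $z_k$. The paper instead subtracts the $L^2(B_{\rho'_m}(z_m))$-orthogonal projection $P_{k,z,\rho}$ onto $\mathbb{P}_\tau$, precisely because this yields the orthogonality relation \eqref{ort}, $\int_{B_1}v_m\,Q=0$ for every $Q\in\mathbb{P}_\tau$, which passes to the limit and forces $v\equiv 0$ once Liouville says $v$ is a polynomial of degree $\le\tau$. Taylor normalization only kills jets at a single point and does not, by itself, annihilate a nonzero polynomial limit; you would need an extra step to recover the contradiction.

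Third, your proposal is formulated directly for the given weak solution $u$. The paper proves the core estimate (Proposition~\ref{prestep}) for $w\in C_0^\infty(\mathbb{R}^n)$, then removes the compact-support restriction via the cutoff argument around \eqref{38}--\eqref{39} (estimating $L(w\eta-w)$ in $C^\gamma(B_1)$ using the decay of the kernel away from the origin), and finally passes to general $u\in C^\gamma$ through mollification $u_\varepsilon=u*\eta_\varepsilon$. Without these layers the far-field contribution of $L$ is uncontrolled, and the blow-up argument does not apply verbatim to a merely bounded $C^\gamma$ solution. As a side remark, your observation that under the natural normalization the nonlocal coefficient scales like $r_k^{2-2s}\to 0$ is a reasonable heuristic---the paper's Claim~3 also ends with the quantity $\theta(\rho_m)\rho_m^\alpha\to\infty$ killing the residual error, and one could argue the limiting increment equation is effectively local---but as written the paper passes to the limit via Lemma~\ref{lemcontr} and Corollary~\ref{coroLiouville} for the mixed operator, and either way the correct route requires the increment/orthogonal-projection machinery that your sketch omits.
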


It is important to highlight that, due to the presence of the coefficient function $a(\cdot)$ in the definition of the operator $\E$, our regularity results must be divided into two different cases, depending on the relationship between the fractional order $s$ and the H\"{o}lder continuity exponent $\alpha$ of $a(\cdot)$. This bifurcation is essential and marks a key difference from the result obtained in \cite{RS}, where the operator was purely nonlocal.

However, this structural dependence on the interplay between $s$ and $\alpha$ is not unexpected. In fact, similar constraints appear in the recent work of Byun, Lee, and Song \cite{BLS}, where the regularity of minimizers is also derived under conditions involving these two exponents. These conditions are not merely technical: they reflect the delicate interaction between the local and nonlocal terms in the mixed operator, which must be carefully balanced to ensure that the solution inherits sufficient smoothness from both components.

Now, if we impose weaker {\it a priori} assumptions on the regularity of the right-hand side $f$ and the solution $u$, we are still able to derive meaningful estimates. In this more general setting, the regularity theory must be adapted to account for the reduced smoothness of the data, leading to a slightly weaker regularity result for solutions.

\begin{theorem}\label{reg2}
	Let $\E$ be an operator as defined in $(\mathrm{P})$, where $a(\cdot) \in C_{loc}^{0,\alpha}(\mathbb{R}^n)$, for some $\alpha \in (0,1)$, satisfying \eqref{conda}, and $L$ satisfies \eqref{defL} and \eqref{condL}, with $s \in \big[\frac{1-\alpha}{2},1\big)$. Suppose that $u$ is a solution of
	$$\E u = f, \quad \mbox{in} \ B_1,$$
	with $f \in L^\infty(B_1)$.  If $u \in L^\infty(\mathbb{R}^n)$, then the following estimates hold:
	\begin{itemize}
		\item If $s \neq \frac{1 - \alpha}{2}$ and $2s + \alpha \notin \mathbb{N}$, then
		$$\|u\|_{C^{2s+\alpha}(B_{\frac{1}{2}})} \le C \Big(\|u\|_{L^\infty(\mathbb{R}^n)} + \|f\|_{L^\infty(B_1)}\Big);$$
		
		\item If $s = \frac{1 - \alpha}{2}$, then for any $\varepsilon \in (0,1)$,
		$$\|u\|_{C^{2s+\alpha-\varepsilon}(B_{\frac{1}{2}})} \le C \Big(\|u\|_{L^\infty(\mathbb{R}^n)} + \|f\|_{L^\infty(B_1)}\Big).$$
	\end{itemize}
\end{theorem}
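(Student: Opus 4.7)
My plan is to follow the blow-up and contradiction strategy of \cite{RS}, now adapted to the mixed operator $\mathcal{E}$, invoking the Liouville rigidity of Corollary \ref{coroLiouville}. I will first handle the generic case $s\neq(1-\alpha)/2$ and $2s+\alpha\notin\mathbb{N}$: assuming the estimate fails, one finds sequences of coefficients $a_k$ and spectral measures $\mu_k$ (all satisfying \eqref{conda}, \eqref{condL}), sources $f_k\in L^\infty(B_1)$, and bounded weak solutions $u_k$ to $\mathcal{E}_k u_k=f_k$ in $B_1$ with $\|u_k\|_{L^\infty(\mathbb{R}^n)}+\|f_k\|_{L^\infty(B_1)}\le 1$, along which the Morrey-type quantity
$$\theta(r):=\sup_{k}\sup_{x_0\in B_{1/2}}\sup_{r'\ge r}(r')^{-\sigma}\inf_{P\in\mathcal{P}_d}\|u_k-P\|_{L^\infty(B_{r'}(x_0))}$$
blows up as $r\to 0^+$, where $\sigma:=2s+\alpha$, $d:=\lfloor\sigma\rfloor$, and $\mathcal{P}_d$ denotes polynomials of degree at most $d$.

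I would then pick $(k,x_k,r_k,P_k)$ almost attaining the supremum and form the blow-up sequence
$$v_k(y):=\frac{(u_k-P_k)(x_k+r_k y)}{r_k^\sigma\,\theta(r_k)}.$$
The minimality of $P_k$ gives $\|v_k\|_{L^\infty(B_1)}\ge c_0>0$, an orthogonality relation against $\mathcal{P}_d$ on $B_1$, and the growth bound $\|v_k\|_{L^\infty(B_R)}\le CR^\sigma$ for $R$ up to $\sim 1/r_k$. A direct computation shows that $v_k$ solves, weakly on an expanding ball,
$$L_k v_k - r_k^{2-2s}\operatorname{div}\!\bigl(\tilde a_k\,\nabla v_k\bigr) = \frac{r_k^{-\alpha}}{\theta(r_k)}\,f_k(x_k+r_k\cdot) - r_k^{2-2s}\operatorname{div}\!\bigl(\tilde a_k\,\nabla\tilde P_k\bigr),$$
with $\tilde a_k(y)=a_k(x_k+r_k y)$ and $\tilde P_k$ the rescaling of $P_k$. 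Because $2-2s>0$ and $\theta(r_k)\to\infty$, the local divergence term and the source both collapse in $L^\infty_{\mathrm{loc}}$, while \eqref{conda} forces $\tilde a_k\to a_\infty$ locally uniformly.

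Using the interior H\"older estimates of \cite{RS} for operators of type $L_k$ (uniform in $\mu_k$ under \eqref{condL}) together with weak compactness of $\mu_k$, I would extract a subsequence along which $v_k\to v_\infty$ locally uniformly, with $v_\infty$ a distributional solution of $L_\infty v_\infty=0$ on $\mathbb{R}^n$ satisfying $|v_\infty(y)|\le C(1+|y|^\sigma)$. Corollary \ref{coroLiouville} then forces $v_\infty$ to be a polynomial of degree at most $d$, but the minimality encoded in $P_k$ passes to the limit and forces $v_\infty\equiv 0$ on $B_1$, contradicting $\|v_\infty\|_{L^\infty(B_1)}\ge c_0$. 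In the borderline case $s=(1-\alpha)/2$, where $\sigma=1$, I would run the same scheme with $\sigma_\varepsilon:=1-\varepsilon$ and constant approximants ($d=0$); the local factor becomes $r_k^{1+\varepsilon}\to 0$, and the limit is an $L_\infty$-harmonic function with sub-$\sigma_\varepsilon$ growth, which Corollary \ref{coroLiouville} forces to be constant, producing the contradiction.

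The main obstacle will be the rescaling step when $\sigma>2$. The function $v_k$ is not bounded on $\mathbb{R}^n$, and although $L_k$ annihilates linear polynomials by symmetry, it does not annihilate the quadratic part of $P_k$ that arises when $\sigma\in(2,2+\alpha)$; I will have to evaluate $L_k P_k$ via a careful truncation and verify that the resulting tail, after division by $r_k^\sigma\theta(r_k)$, is absorbed into the right-hand side. This is precisely where the sharp matching between the regularity gain $2s+\alpha$ and the H\"older exponent $\alpha$ of $a(\cdot)$ is decisive; once this scaling bookkeeping is in place, the compactness and rigidity steps proceed as in \cite{RS}.
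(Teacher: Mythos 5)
The central scaling claim in your blow-up is reversed, and this breaks the argument. Writing $\tilde w_k(y)=w_{k_m}(z_m+\rho_m y)$, the nonlocal part scales as $L_{k_m}\tilde w_k=(\rho_m)^{2s}(L_{k_m}w_{k_m})(z_m+\rho_m\cdot)$ while the local part scales as $\operatorname{div}(\tilde a_k\nabla\tilde w_k)=(\rho_m)^{2}\bigl(\operatorname{div}(a_k\nabla w_{k_m})\bigr)(z_m+\rho_m\cdot)$. When you normalize so that $L_k v_k$ has unit coefficient (which you do, since your rescaled right-hand side is $r_k^{-\alpha}f_k/\theta(r_k)$), the divergence term picks up the factor $r_k^{2s-2}$, not $r_k^{2-2s}$. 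Since $s<1$, this factor blows up as $r_k\to 0$: in the blow-up regime the \emph{local} term dominates and the \emph{nonlocal} one would be the small perturbation, not the other way around. Consequently, your claim that the limit function solves a purely nonlocal equation $L_\infty v_\infty=0$ is false, and the plan to invoke the Liouville rigidity of Corollary \ref{coroLiouville} "for $L_\infty$" is misdirected. Note that Corollary \ref{coroLiouville} is in fact stated and used in the paper for the \emph{mixed} operator $\mathcal{E}=L-\operatorname{div}(a\nabla\cdot)$, which is precisely what is needed because the local part survives in the limit; the paper's argument (Propositions \ref{presteps>} and \ref{prestepborder}, built on Proposition \ref{prestep}) carries the full mixed operator to the limit via the compactness Lemma \ref{lemcontr}, rather than discarding the local part.

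Beyond the scaling error, two further pieces of the paper's argument are missing from your plan. First, the paper works with difference quotients $v_m(\cdot+h)-v_m$ and uses the identity $\delta(P,x+h,y)-\delta(P,x,y)=0$ for polynomials of degree $\le 2$ (equation \eqref{delta33}) to annihilate the nonlocal contribution of the approximating polynomial inside the operator; your proposal acknowledges a difficulty when $\sigma>2$ but does not offer this mechanism, and without it the "tail" of $L_kP_k$ is not controlled. Moreover, your $\theta$ is built from $L^\infty$ best approximations whereas the paper's $\theta$ uses Hölder seminorms, and the paper needs $H^1$-boundedness of the blow-up sequence (Claim 2 of Proposition \ref{prestep}) to invoke Lemma \ref{lemcontr} — this is exactly where the hypothesis $2s+\alpha\ge 1$ enters and why the theorem restricts $s$ to $[\frac{1-\alpha}{2},1)$; your outline does not address this. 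Second, the paper's proof of Theorem \ref{reg2} is not a direct blow-up on $u$: it first proves Propositions \ref{presteps>}/\ref{prestepborder} for $w\in C_0^\infty(\mathbb{R}^n)$, then applies a cutoff $\eta$ and estimates $\|L(w\eta-w)\|_{L^\infty(B_1)}$ using that $w\eta-w$ vanishes near $B_1$, and finally mollifies a general bounded weak solution $u$. Your proposal skips this localization/approximation step entirely.
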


We divide our previous regularity result into two separate cases, each corresponding to different regimes of the parameters involved. In both cases, we establish H\"{o}lder continuity of the solutions under boundedness assumptions. However, unlike \cite{RS}, we were not able to treat the delicate case when $2s + \alpha < 1$. The lack of sufficient regularity in this range is due to the technical obstacles that our methods do not work well, and thus this scenario remains an open and interesting question for future research.

Moving forward, in order to prove regularity up to the boundary, we employ a maximum principle version adapted to our case, as follows.

\begin{theorem}\label{maxprin}
	Let $\Omega$ be a bounded domain in $\mathbb{R}^n$, $L$ be any operator of the form \eqref{defL},\eqref{condL}, and $a(\cdot) \in C_{\mathrm{loc}}^{0,\alpha}(\mathbb{R}^n)$, for some $\alpha \in (0,1)$, satisfying \eqref{conda}. Assume that $u\in H^1(\mathbb{R}^n)$ satisfies $\mathcal{E}u \ge 0$ in $\Omega$ weakly. If $u \ge 0$ in $\mathbb{R}^n\setminus\Omega$, then $u \ge 0$ almost everywhere in $\Omega$.
\end{theorem}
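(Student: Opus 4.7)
The plan is to adapt the classical weak maximum principle argument by testing the variational inequality against $v := u^-$, the negative part of the solution. First I would verify admissibility of this test function: since $u \in H^1(\mathbb{R}^n)$ and $t \mapsto t^-$ is $1$-Lipschitz, we have $u^- \in H^1(\mathbb{R}^n)$ with $\nabla u^- = -\chi_{\{u < 0\}}\nabla u$, and the hypothesis $u \ge 0$ in $\mathbb{R}^n\setminus\Omega$ forces $u^-\equiv 0$ outside $\Omega$, so in fact $u^- \in H_0^1(\Omega)$. A standard mollification after a cut-off near $\partial\Omega$ produces non-negative $v_k \in C_0^\infty(\Omega)$ with $v_k \to u^-$ in $H^1$, and continuity of the bilinear pairing $(u,v) \mapsto \int u L v\,dx + \int a(x)\nabla u\cdot\nabla v\,dx$ under $H^1$-convergence (the nonlocal piece being controlled by the Gagliardo form of $L$, which is $H^s\times H^s$-continuous for $s\in(0,1)$ and hence dominated by $\|v\|_{H^1}$) allows passage to the limit, yielding
\[
\int_{\mathbb{R}^n} u\,Lu^-\,dx + \int_{\mathbb{R}^n} a(x)\,\nabla u\cdot\nabla u^-\,dx \ge 0.
\]

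Next I would estimate each piece with its proper sign. For the local part, the chain rule gives $\nabla u\cdot\nabla u^- = -|\nabla u^-|^2$ almost everywhere, and \eqref{conda} yields $\int a(x)\,\nabla u\cdot\nabla u^-\,dx \le -a^-\int|\nabla u^-|^2\,dx$. For the nonlocal part, I would invoke the elementary pointwise inequality $(b-a)(b^- - a^-) \le -(b^- - a^-)^2$, valid for all $a,b\in\mathbb{R}$ and verified by splitting into the four cases determined by the signs. Substituting $a = u(x)$, $b = u(y)$ and integrating against the symmetric L\'evy kernel of $L$, the Beurling--Deny contraction property gives $\int u\,Lu^-\,dx \le -\mathcal{Q}_L(u^-, u^-) \le 0$, where $\mathcal{Q}_L(v,v) \ge 0$ denotes the symmetric Dirichlet form naturally associated with $L$ via the weak formulation. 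Combining these two one-sided estimates with the variational inequality forces
\[
0 \le a^-\int|\nabla u^-|^2\,dx + \mathcal{Q}_L(u^-, u^-) \le 0,
\]
so both non-negative quantities vanish. In particular $\nabla u^- \equiv 0$ a.e.; since $u^- \in H_0^1(\Omega)$ with $\Omega$ bounded, Poincar\'e's inequality then gives $u^- \equiv 0$ a.e., i.e., $u \ge 0$ a.e.\ in $\Omega$.

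The main obstacle is the sign bookkeeping in the nonlocal term. After decomposing $u = u^+ - u^-$, the cross-interaction $\mathcal{Q}_L(u^+, u^-)$ --- which, thanks to the pointwise disjointness $u^+u^- \equiv 0$, collapses to the manifestly non-positive quantity $-2\iint u^+(x)\,u^-(y)\,K(x,y)\,dx\,dy$ with $K$ the symmetric L\'evy kernel of $L$ --- must combine with the self-energy $\mathcal{Q}_L(u^-, u^-) \ge 0$ in precisely the way required to yield the pointwise estimate above, which is essentially the classical unit-contraction property of Dirichlet forms of symmetric stable processes. A secondary technical point, the density of non-negative $C_0^\infty(\Omega)$-functions in the convex cone of non-negative $H_0^1(\Omega)$-functions, is routine via mollification combined with a cut-off, together with the standard truncation-by-zero extension of $H_0^1(\Omega)$-functions to all of $\mathbb{R}^n$.
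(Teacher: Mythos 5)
Your argument is correct and follows essentially the same route as the paper's proof: test the variational inequality with the negative part $u^-$, observe that the local term yields $-\int a|\nabla u^-|^2$ and that the nonlocal cross-term is non-positive because $u^+$ and $u^-$ have disjoint supports (your pointwise inequality $(b-a)(b^--a^-)\le -(b^--a^-)^2$ is exactly equivalent to the paper's decomposition $\mathcal{Q}(u,u^-)=\mathcal{Q}(u^+,u^-)-\mathcal{Q}(u^-,u^-)$ with $\mathcal{Q}(u^+,u^-)\le 0$), and conclude that both non-negative quantities vanish. Your write-up is slightly more explicit than the paper's at the end --- you spell out that $\nabla u^-\equiv 0$ together with $u^-\in H^1_0(\Omega)$ and Poincar\'e forces $u^-\equiv 0$, whereas the paper declares a contradiction without making this last step explicit --- but the mathematical content is identical.
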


We now establish a up to the boundary regularity result that, in contrast to the previous Theorem \ref{reg1}, does not rely on any \textit{a priori} regularity assumption on the solution $u$. In this setting, we restrict to the case $a \equiv 1$, and the precise statement reads as follows.

\begin{theorem}\label{boundary_reg}
	Let $\Omega$ be a $C^{1,1}$ domain in $\mathbb{R}^n$, $L$ be any operator of the form \eqref{defL},\eqref{condL} with $s \in (0,1)$, and $a(x)\equiv 1$. Assume that $u$ is a weak solution of
		$$\left\{
		\begin{array}{rcccl}
			\mathcal{E} u & =& f & \text{in} & \Omega; \\
			u & = & 0 & \text{in} & \mathbb{R}^n \setminus \Omega,
		\end{array}
		\right.$$
	for a given $f \in L^\infty(\Omega)$. If $u \in L^\infty(\Omega)$, then $u \in C^{1,\gamma}(\overline{\Omega})$ for any $\gamma \in (0,\min\{1,2-2s\})$.
\end{theorem}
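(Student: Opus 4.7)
The plan is to exploit that, with $a \equiv 1$, the local term $-\Delta$ has strictly higher differential order than the nonlocal term $L$ (of order $2s < 2$), so near $\partial\Omega$ the Laplacian dominates. My strategy is to rewrite the equation as
\[
-\Delta u = f - Lu \ \text{in}\ \Omega, \qquad u = 0 \ \text{on}\ \partial\Omega,
\]
and apply classical Dirichlet--Laplace regularity theory after estimating the forcing term $Lu$ carefully up to the boundary.

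The first step is to establish the linear boundary decay $|u(x)| \leq C\, d(x)$ (with $d(x) := \mathrm{dist}(x, \partial\Omega)$) via Theorem~\ref{maxprin} and a torsion-type barrier. Let $\psi$ solve $-\Delta \psi = K$ in $\Omega$ with $\psi = 0$ on $\partial\Omega$, extended by zero outside; since $\Omega$ is $C^{1,1}$, classical theory gives $\psi \in C^{1,1}(\overline{\Omega})$ with $\psi(x) \asymp K\, d(x)$. A scale-$d(x)/2$ split of the defining integral of $L\psi$, using the $C^{1,1}$ smoothness of $\psi$ on $B_{d(x)/2}(x)$ and a global Lipschitz bound outside, yields $|L\psi(x)| \leq C K (1 + d(x)^{1-2s})$, which is uniformly bounded when $s \leq 1/2$ and bounded below (in fact nonnegative near $\partial\Omega$, as is visible from the $1$D half-line model) when $s > 1/2$. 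Hence, for $K$ sufficiently large, $\mathcal{E}\psi \geq \|f\|_{L^\infty(\Omega)}$ in $\Omega$, and Theorem~\ref{maxprin} applied to $\psi \pm u$ delivers the claimed linear decay.

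In the second step, I upgrade $u$ to full interior smoothness using Theorem~\ref{reg1} with $a \equiv 1$ (so $\alpha \in (0,1)$ is at our disposal), obtaining $u \in C^{1,\beta}_{\mathrm{loc}}(\Omega)$ with $\beta$ close to $1$ (and even $C^{2,\beta'}_{\mathrm{loc}}$ for larger $s$). Combined with the boundary decay from Step 1, a second scale-$d(x)/2$ decomposition of $Lu(x)$ gives
\[
|Lu(x)| \leq C\bigl(1 + d(x)^{1-2s}\bigr) \ (s \neq 1/2), \qquad |Lu(x)| \leq C\bigl(1 + |\log d(x)|\bigr) \ (s = 1/2).
\]
Thus $g := f - Lu$ lies in $L^\infty(\Omega)$ when $s < 1/2$, in $L^p(\Omega)$ for every $p < \infty$ when $s = 1/2$, and satisfies the weighted bound $|g(x)| \leq C\, d(x)^{1-2s}$ when $s > 1/2$.

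Finally, I would close the argument by applying classical Dirichlet--Laplace regularity to $-\Delta u = g$ in $\Omega$. When $s \leq 1/2$, Calderón--Zygmund gives $u \in W^{2,p}(\Omega)$ for every $p < \infty$, hence $u \in C^{1,\gamma}(\overline{\Omega})$ for every $\gamma < 1 = \min\{1, 2-2s\}$. When $s > 1/2$, the weighted bound on $g$ together with the sharp Green's function estimate $G(x,y) \leq C \min\bigl(|x-y|^{2-n}, d(x) d(y)/|x-y|^n\bigr)$ (with the usual log replacement when $n = 2$) yields the pointwise bound $|u(x)| \leq C\, d(x)^{3-2s}$ in the normal direction; combined with tangential-normal Schauder theory in $C^{1,1}$ domains this upgrades to $u \in C^{1,\,2-2s-\varepsilon}(\overline{\Omega})$ for every $\varepsilon > 0$, as required. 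The main obstacle I anticipate lies in Step~1: for $s > 1/2$ the quantity $L\psi$ develops a boundary singularity of the same critical exponent $1 - 2s$, and one must be careful to exploit its sign — or, equivalently, work with smooth cut-offs of $\psi$ on $\Omega_\delta := \{d > \delta\}$ and pass to the limit $\delta \to 0^+$ — in order to apply the maximum principle cleanly.
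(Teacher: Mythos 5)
Your route is genuinely different from the paper's: the paper first proves a global $W^{2,p}$ theorem (Theorem \ref{W2p-reg}) by combining $L^p$ estimates for $L$ (Lemmas \ref{L<12Lp}--\ref{L>12Lp}), a Banach fixed-point argument for $Lu-\Delta u+\lambda u=f$ with $\lambda$ large (Lemmas \ref{existence<1/2}--\ref{existence>1/2}), and a barrier/maximum-principle bootstrap (Lemmas \ref{linftyestimate}--\ref{=1linftyestimate}), and then deduces $C^{1,\gamma}(\overline\Omega)$ by Sobolev embedding. Your proposal instead goes through pointwise boundary decay and Green's function bounds. Unfortunately there are concrete gaps. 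The barrier in Step~1 does not work as stated: since $\psi$ scales linearly in $K$, so does $L\psi$, and one cannot overcome a negative part of $L\psi$ by increasing $K$. In fact, for the torsion function on $\Omega=(0,1)$ at $x=\tfrac12$, one computes $L\psi(\tfrac12)=-K\,2^{2s-1}\left(\tfrac{1}{2-2s}+\tfrac{1}{2s}\right)$, so that $\mathcal{E}\psi(\tfrac12)=L\psi(\tfrac12)+K<0$ already for, say, $s=\tfrac14$; the comparison $\mathcal{E}\psi\geq\|f\|_\infty$ therefore fails. The paper avoids this by adding a large zeroth-order term $\lambda\,\mathrm{Id}$ and using a bounded, concave barrier $w=1-e^{\beta(|x|^2-R^2)}$ together with the convex change of variable $\varphi(t)=\tfrac1\lambda(1+e^{-\lambda t})$ (Lemmas \ref{vcinfty}--\ref{=1linftyestimate}). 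Also, Step~2 invokes Theorem~\ref{reg1}, which requires $u,f\in C^\gamma$; with only $L^\infty$ data you must use Theorem~\ref{reg2}, and making the scaled interior estimates quantitative near $\partial\Omega$ requires the very boundary decay Step~1 was supposed to provide, so there is a circularity.

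The more serious structural gap is in Step~3 for $s>\tfrac12$. The pointwise bound $|Lu(x)|\lesssim d(x)^{1-2s}$ gives $Lu\in L^p(\Omega)$ only for $p<\tfrac{1}{2s-1}$, because $\int_\Omega d(y)^{(1-2s)p}\,dy<\infty$ exactly when $(2s-1)p<1$. This falls short of $p>n$ (needed for $W^{2,p}\hookrightarrow C^{1,\gamma}$) as soon as $s\geq\tfrac12+\tfrac{1}{2n}$, i.e.\ for most of the range when $n\geq2$. The paper's Lemma \ref{L>12Lp} achieves the larger range $p<\tfrac{n}{2s-1}$ precisely because it bounds $\|Lu\|_p$ in terms of $\|u\|_{W^{2,p}}$ rather than a pointwise weight, exploiting that the boundary layer $\Omega\setminus\Omega_\varepsilon$ has measure $\sim\varepsilon$ and splitting the integral there; it is an operator estimate, not a pointwise one. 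Your Green's-function fallback is not worked out, and the claimed normal decay $|u(x)|\lesssim d(x)^{3-2s}$ with $3-2s>1$ cannot hold in general (generically $u\asymp d$ near a $C^{1,1}$ boundary). So while the idea of letting $-\Delta$ absorb $Lu$ is the right heuristic, as written the argument does not close for $s$ away from $\tfrac12$, and Step~1 does not close for any $s$.
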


In fact, Theorem \ref{boundary_reg} can be regarded as a direct consequence of the $W^{2,p}$-regularity established in Theorem \ref{W2p-reg} in Section 5. Moreover, it extends Theorem \ref{reg2} (in the case $a \equiv 1$), bridging precisely the gap left open when $2s + \alpha < 1$.

We emphasize that the weight $a(\cdot)$ in the local part of the operator is not a mere technicality but also has a clear application meaning. As discussed before, in ecological models (see \cite{DV,DLV}), $u$ describes the density of a population within a habitat, and the mixed operator reflects the combination of Brownian and L\'{e}vy-type movements. The function $a(x)$ captures environmental heterogeneity: it is larger in open or favorable regions, enhancing local diffusion, and smaller in fragmented or adverse areas, where nonlocal dispersal becomes more significant.


To the best of our knowledge, this is the first result in the mixed local--nonlocal operator involving the general stable operator, even when the weigh $a(\cdot)$ is constant.

The structure of the paper is organized as follows. In Section \ref{Sec2}, we prove the maximum principle Theorem \ref{maxprin} and establish, by using tools from the theory of the Heat Kernel, a Liouville-type theorem in the entire space $\mathbb{R}^n$. Sections \ref{Sec3} and \ref{Sec4} are dedicated to the proofs of Theorems \ref{reg1} and \ref{reg2}, respectively. In both cases, the approach consists of first establishing regularity estimates for smooth test functions. These preliminary results are then extended to more general solutions via a standard approximation procedure. Finally, Section \ref{Sec5} is devoted to prove the up to the boundary regularity, namely Theorem \ref{boundary_reg}.

\section{A Maximum principle and Liouville-type theorem}\label{Sec2}

In this section, we establish Theorem \ref{maxprin} and prove a Liouville-type theorem in the whole space by means of an argument based on the heat kernel. We begin with the maximum principle.

To set the framework, let us clarify what we mean when we say that a function $u$ satisfies $\mathcal{E}u \ge 0$ weakly in $\Omega$. Specifically, this condition is understood in the sense that
	\begin{equation}\label{def>0}
		\int_{\mathbb{R}^n} u Lv \ dx + \int_{\mathbb{R}^n}a(x)\nabla u \cdot\nabla v \ dx \ge 0, \quad \mbox{for any}\ v \in C_0^\infty(\Omega).
	\end{equation}
By standard density and extension arguments, this inequality continues to hold for any function $v \in H^1(\mathbb{R}^n)$.
	
\begin{proof}[Proof of Theorem \ref{maxprin}]
	First, note that standard computation yields
		$$\int_{\mathbb{R}^n} u(x) Lv(x)\ dx = \dfrac{1}{2} \int_{\mathbb{R}^n}\int_{S^{n-1}}\int_{-\infty}^\infty \dfrac{\big(u(x)-u(x+\theta r)\big) \cdot \big(v(x)-v(x+\theta r)\big)}{|r|^{1+2s}} \ dr \ d\mu \ dx.$$
	We now argue by contradiction. Suppose that there exists a measurable set $E \subset \Omega$ of positive measure such that $u < 0$ in $E$. Define the positive and negative parts of $u$ as
		$$u_+ \coloneqq \max\{u,0\} \quad \mbox{and} \quad u_- \coloneqq \max\{-u,0\},$$
	so that $u = u_+ - u_-$. It is well known that $u_- \in H^1(\mathbb{R}^n)$, which makes it a valid function in \eqref{def>0}. Substituting $v = u_-$ into \eqref{def>0}, we obtain
		\begin{eqnarray}\label{cont>0}
			0&\le& \dfrac{1}{2} \int_{\Omega}\int_{S^{n-1}}\int_{-\infty}^\infty \dfrac{\big(u(x)-u(x+\theta r)\big) \cdot \big(u_-(x)-u_-(x+\theta r)\big)}{|r|^{1+2s}} dr d\mu dx \nonumber\\
			&& \hspace{10cm}+ \int_{\Omega}a(x)\nabla u \cdot\nabla u_- dx \nonumber \\
			&=&\dfrac{1}{2} \int_{\Omega}\int_{S^{n-1}}\int_{-\infty}^\infty \dfrac{\big(u(x)-u(x+\theta r)\big) \cdot \big(u_-(x)-u_-(x+\theta r)\big)}{|r|^{1+2s}} dr d\mu dx - \int_{\Omega} a(x)|\nabla u_-|^2 dx \nonumber \\
			&\le& \dfrac{1}{2} \int_{\Omega}\int_{S^{n-1}}\int_{-\infty}^\infty \dfrac{\big(u(x)-u(x+\theta r)\big) \cdot \big(u_-(x)-u_-(x+\theta r)\big)}{|r|^{1+2s}} dr d\mu dx.
		\end{eqnarray}
	On the other hand, using the decomposition $u = u_+ - u_-$, we find
		\begin{eqnarray*}
			&&\dfrac{1}{2} \int_{\Omega}\int_{S^{n-1}}\int_{-\infty}^\infty \dfrac{\big(u(x)-u(x+\theta r)\big) \cdot \big(u_-(x)-u_-(x+\theta r)\big)}{|r|^{1+2s}} dr d\mu dx \\
			&=& \dfrac{1}{2} \int_{\Omega}\int_{S^{n-1}}\int_{-\infty}^\infty \dfrac{\big(u_+(x)-u_+(x+\theta r)\big) \cdot \big(u_-(x)-u_-(x+\theta r)\big)}{|r|^{1+2s}} dr d\mu dx\\
			&& - \dfrac{1}{2} \int_{\Omega}\int_{S^{n-1}}\int_{-\infty}^\infty \dfrac{\big|u_-(x)-u_-(x+\theta r)\big|^2}{|r|^{1+2s}} dr d\mu dx \\
			&\le&\dfrac{1}{2} \int_{\Omega}\int_{S^{n-1}}\int_{-\infty}^\infty \dfrac{\big(u_+(x)-u_+(x+\theta r)\big) \cdot \big(u_-(x)-u_-(x+\theta r)\big)}{|r|^{1+2s}} dr d\mu dx.
		\end{eqnarray*}
	Observe that we always have $\big(u_+(x)-u_+(x+\theta r)\big) \cdot \big(u_-(x)-u_-(x+\theta r)\big) \le 0$, since $u_+$ and $u_-$ have disjoint supports. Consequently, the whole expression is less than or equal to zero. This contradicts inequality \eqref{cont>0}, which required the left-hand side to be non-negative. Therefore, our assumption was false, and we conclude that $u\ge0$ almost everywhere in $\Omega$, as desired.
\end{proof}

Now, our main goal is to prove the Liouville-type theorem. To begin with, we point out that the heat kernel associated with the operator $\E$ is defined via the Fourier transform as
	$$H(t,\cdot) = \mathcal{F}\big(\exp(-A(\xi))t\big),$$
where $A(\xi)$ denotes the Fourier symbol of $\E$. This symbol can be expressed as the sum of the Fourier symbols corresponding to the operators $L$ and $-\mbox{div}(a(x)\nabla(\cdot))$. As discussed in \cite{RS} the symbol of $L$ is given by
	$$A_L(\xi) = c \int_{S^{n-1}} \vert \xi \cdot \theta \vert^{2s} \ d\mu(\theta).$$
On the other hand, due to the presence of the variable coefficient $a(x)$, the symbol $A_d(\xi)$ corresponding to $\mbox{div}(a(x)\nabla u)$ cannot be written explicitly, because it depends on the Fourier transform of $a$. However, by the condition \eqref{conda}, there exist $\lambda_2, \Lambda_2>0$ depending on the constants of \eqref{conda} such that the bounds
	$$\lambda_2 |\xi|^2 \le A_d(\xi) \le \Lambda_2 |\xi|^2,$$
holds. Combining both contributions, we deduce the following two-sided estimate for $A(\xi)$:
	\begin{equation}\label{FSbounds}
		\lambda \min \{\vert \xi\vert^{2s}, \vert \xi\vert^{2}\} \le A(\xi) \le \Lambda \max \{\vert \xi\vert^{2s}, \vert \xi\vert^{2}\},
	\end{equation}
for some constants $\lambda,\Lambda>0$.

\begin{proposition}\label{heatk}
	Let $\E$ be any operator defined as in $(P)$ with $L$ satisfying \eqref{defL} and \eqref{condL}, $s \in (0,1)$ and $a(\cdot) \in C_{loc}^{0,\alpha}(\mathbb{R}^n)$ satisfying \eqref{conda}. Then, there exists a constant $C=C(n,s,\alpha,\delta,\lambda,\Lambda)>0$ such that
		\begin{enumerate}[$(a)$]
			\item For all $\delta>0$ we have $\displaystyle \int_{\mathbb{R}^n} \big(1+\vert x \vert^{2s-\delta}\big)H(1,x) \ dx \le C$;
			
			\item Moreover, $[H(1,x)]_{C^{0,1}(\mathbb{R}^n)} \le C$.
		\end{enumerate}
\end{proposition}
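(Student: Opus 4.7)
The plan is to exploit the factorization $e^{-A(\xi)} = e^{-A_L(\xi)}\cdot e^{-A_d(\xi)}$ of the Fourier symbol, which by the convolution theorem yields
$$H(1,\cdot) = H_L(1,\cdot) * H_d(1,\cdot),$$
where $H_L = \mathcal{F}^{-1}(e^{-A_L})$ is the heat kernel of the purely nonlocal stable operator $L$ and $H_d = \mathcal{F}^{-1}(e^{-A_d})$ is the heat kernel of the local divergence-form part. Both factors are non-negative probability densities, and one can import sharp pointwise bounds for each from the literature.

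I would prove (b) first, as it is the most direct. Differentiating under the Fourier integral one gets
$$|\nabla H(1,x)| \le c\int_{\mathbb{R}^n}|\xi|\,e^{-A(\xi)}\,d\xi,$$
and splitting the integration region into $\{|\xi|\le 1\}$ and $\{|\xi|>1\}$, the first contribution is trivially bounded while on the second the inequality $A(\xi)\ge \lambda|\xi|^{2s}$ from \eqref{FSbounds} (which holds on $|\xi|\ge 1$, since there $|\xi|^{2s}\le|\xi|^2$) produces $e^{-A(\xi)}\le e^{-\lambda|\xi|^{2s}}$, making the tail integral converge. This yields a uniform bound on $|\nabla H(1,\cdot)|$, i.e.\ the Lipschitz seminorm estimate.

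For (a) I would combine the convolution structure with the elementary inequality $|z+y|^{p}\le C_{p}(|z|^{p}+|y|^{p})$ (valid for every $p>0$: sub-additivity if $p\le 1$, convexity if $p>1$). Taking $p = 2s-\delta$ in the non-trivial range $\delta\in(0,2s)$ — the range $\delta\ge 2s$ being immediate from $|x|^{2s-\delta}\le 1+|x|$ — one reduces the moment estimate to
$$\int_{\mathbb{R}^n}|x|^{2s-\delta}H(1,x)\,dx \le C\int_{\mathbb{R}^n}|z|^{2s-\delta}H_L(1,z)\,dz + C\int_{\mathbb{R}^n}|y|^{2s-\delta}H_d(1,y)\,dy,$$
using $\int H_L = \int H_d = 1$. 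The first integral is finite thanks to the classical pointwise bound $H_L(1,z)\le C(1+|z|)^{-n-2s}$ recalled e.g.\ in \cite{RS}: the integrand behaves like $|z|^{-n-\delta}$ at infinity and is locally integrable. The second integral is finite because the lower bound $A_d(\xi)\ge \lambda_2|\xi|^2$ forces $H_d(1,\cdot)$ to admit Gaussian-type upper bounds, and in particular every polynomial moment is finite. Together with $\int H(1,x)\,dx = e^{-A(0)} = 1$, this proves (a).

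The main obstacle is a rigorous justification of the decay/moment estimate for $H_d$, since the symbol $A_d$ of the variable-coefficient operator $-\operatorname{div}(a(x)\nabla\cdot)$ is known only through the two-sided bound $\lambda_2|\xi|^2\le A_d(\xi)\le \Lambda_2|\xi|^2$ rather than given explicitly. A clean route is to invoke Aronson-type Gaussian upper bounds for heat kernels of uniformly elliptic divergence-form operators, which directly yield $H_d(1,y)\le Ce^{-c|y|^2}$ and hence all moments; alternatively one may differentiate $e^{-A_d(\xi)}$ in $\xi$ and use that each derivative remains integrable to convert smoothness of the symbol into polynomial decay of $H_d$ of arbitrary order. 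If one prefers to avoid the convolution structure altogether, part (a) can be recovered directly by the L\'evy--Khintchine-type identity
$$\int_{\mathbb{R}^n}|x|^{2s-\delta}H(1,x)\,dx = c_{n,s,\delta}\int_{\mathbb{R}^n}\frac{1-e^{-A(\xi)}}{|\xi|^{n+2s-\delta}}\,d\xi,$$
where the estimate $1-e^{-A(\xi)}\le \min(1,A(\xi))$ combined with \eqref{FSbounds} produces integrability at both the origin (via $A(\xi)\le \Lambda|\xi|^{2s}$ for $|\xi|\le 1$) and at infinity (via $\delta<2s$), for every $\delta\in(0,2s)$.
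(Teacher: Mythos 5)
Your part (b) is essentially the paper's argument made explicit, and is fine. The main route you propose for part (a), via the convolution factorization $H(1,\cdot)=H_L(1,\cdot)*H_d(1,\cdot)$, has a genuine gap: the pointwise bound you attribute to \cite{RS}, namely $H_L(1,z)\le C(1+|z|)^{-n-2s}$, is false for general stable operators. What \cite{RS} actually proves (their Lemma~2.1) is $H_L(1,z)\le C(1+|z|)^{-1-2s}$, and this is sharp because the spectral measure $\mu$ can concentrate on finitely many directions (e.g.\ $\mu=\sum(\delta_{e_i}+\delta_{-e_i})$, where the kernel is a product of one-dimensional kernels and decays only like $|z_i|^{-1-2s}$ along the $i$-th axis). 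With the correct decay $(1+|z|)^{-1-2s}$, the integral $\int_{\mathbb{R}^n}|z|^{2s-\delta}H_L(1,z)\,dz$ is not finite by a naive pointwise estimate when $n\ge 2$, since in polar coordinates the integrand scales like $r^{n-1}\cdot r^{2s-\delta}\cdot r^{-1-2s}=r^{n-2-\delta}$ at infinity. This is precisely why \cite{RS} do not deduce their moment bound (Corollary~2.4) from a pointwise kernel estimate, and why the paper in front of you instead builds the barrier $\varphi(x)=(1+|x|^2)^{s-\delta}$, checks $|\mathcal{E}\varphi|\le C$ by rescaling, and then uses the identity $\int\varphi\,H(1,\cdot)-1=\int_0^1\int\mathcal{E}\varphi\,H(t,\cdot)\,dx\,dt$ together with $\int H(t,\cdot)=1$. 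Replacing your pointwise bound by a direct citation of \cite{RS}, Corollary~2.4, would repair the convolution route, but as written the crucial step fails.

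Your closing alternative --- the Fourier-side identity
\begin{equation*}
\int_{\mathbb{R}^n}|x|^{2s-\delta}H(1,x)\,dx = c_{n,s,\delta}\int_{\mathbb{R}^n}\frac{1-e^{-A(\xi)}}{|\xi|^{n+2s-\delta}}\,d\xi,
\end{equation*}
valid for $0<2s-\delta<2$ by Tonelli (both $1-\cos(x\cdot\xi)$ and $H(1,x)$ are nonnegative), combined with $1-e^{-A(\xi)}\le\min\{1,A(\xi)\}$ and the bounds \eqref{FSbounds} --- is correct and is a genuinely different proof from the paper's. It converges near the origin because $A(\xi)\le\Lambda|\xi|^{2s}$ gives $|\xi|^{\delta-n}$ locally, and at infinity because the numerator is bounded and $\delta<2s$. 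This Fourier-analytic route is in some respects cleaner than the barrier argument: it needs neither a comparison function nor the semigroup identity, and it bypasses the convolution factorization entirely, using only the symbol bounds \eqref{FSbounds} that the paper already postulates. If you adopt it as the main argument (rather than as a remark), part (a) is proved.
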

\begin{proof}
	$(a)$ We begin the proof by claiming that the function $\varphi(x) = (1 + |x|^2)^{s - \delta}$ satisfies
		$$|\E \varphi(x)| \le C,$$
	for all $x \in \mathbb{R}^n$, where $C > 0$ is a constant depending only on $n$, $s$, $\alpha$, $\lambda$, and $\Lambda$. To prove this, consider the family of rescaled functions
		$$\varphi_\rho(x) = \rho^{-2s + 2\delta} \varphi(\rho x), \quad \rho \ge 1.$$
	These functions satisfy
		$$
		\begin{cases}
			\varphi_\rho(x) = (\rho^{-2} + |x|^2)^{s - \delta}, \\
			|\E \varphi_\rho(x)| \le C \quad \text{for } x \in B_2 \setminus B_1,
		\end{cases}
		$$
	with a constant $C$ independent of $\rho$. Indeed, from \cite{RS}, we know
		$$|L \varphi_\rho| \le C \quad \text{in } B_2 \setminus B_1.$$
	To estimate the second-order term, a straightforward computation shows
		$$\nabla \varphi_\rho(x) = 2(s - \delta)(\rho^{-2} + |x|^2)^{s - \delta - 1}x,$$
	and the Laplacian becomes
		$$\Delta \varphi_\rho(x) = 4(s - \delta)(s - \delta - 1)(\rho^{-2} + |x|^2)^{s - \delta - 2}|x|^2 + 2n(s - \delta)(\rho^{-2} + |x|^2)^{s - \delta - 1}.$$
	Both terms are uniformly bounded in the annulus $B_2 \setminus B_1$ for all $\rho \ge 1$, hence
		$$|\mbox{div}(a(x)\nabla \varphi_\rho)| \le C.$$
	Combining these bounds, we conclude that
	$$|\E \varphi_\rho| \le C \quad \text{in } B_2 \setminus B_1.$$
	Scaling back yields
	$$|\E \varphi(x)| \le C \rho^{2 - 2\delta} \quad \text{in } B_{2\rho} \setminus B_\rho,$$
	for any $\rho \ge 1$. This proves that $\E \varphi$ remains uniformly bounded in whole $\mathbb{R}^n$.
	
	Now, since $H(0,x)=\overline{\delta}(x)$, with $\overline{\delta}$ denoting the Dirac's delta, we have by sifting property that
		$$\int_{\mathbb{R}^n}\varphi(x)H(0,x)dx = \int_{\mathbb{R}^n}\varphi(x)\overline{\delta}(x)dx = \varphi(0) = 1.$$
	Thus, by mean value theorem, we get
		\begin{eqnarray*}
			\int_{\mathbb{R}^n}\varphi(x)H(1,x)dx -1 &=& \int_{\mathbb{R}^n}\varphi(x)\Big(H(1,x)-H(0,x)\Big)dx \\
			&=& \int_{0}^{1}\int_{\mathbb{R}^n} \varphi(x)H_t(t,x)\ dx\ dt \\
			&=& \int_{0}^{1} \int_{\mathbb{R}^n} \E \varphi(x) H(t,x)\ dx \ dt,
		\end{eqnarray*}
	provide that $H(t,x)$ is the heat kernel of $\E$. Therefore, it follows
		\begin{eqnarray*}
			\int_{\mathbb{R}^n}\varphi(x)H(1,x)dx &\le& 1 + \int_{0}^{1}\int_{\mathbb{R}^n} \vert \E\varphi\vert \cdot H(t,x) \ dx \ dt \\
			&\le& 1+ C \int_{0}^{1}\int_{\mathbb{R}^n}H(t,x)\ dx \ dt \\
			&\le& C,
		\end{eqnarray*}
	and the item $(a)$ follows.
	
	\noindent $(b)$ By condition \eqref{conda} we know that \eqref{FSbounds} holds. Thus, it allow us to see that the Fourier transform of $H(1,x)$ is rapidly decreasing and then the gradient of $H(1,x)$ is bounded, and therefore the result follows.
\end{proof}

Now we are able to state and prove the Liouville-type theorem in the entire space, as follows.

\begin{theorem}\label{Liouville1}
	Let $\E$ be any operator defined as in $(P)$, with $L$ satisfying \eqref{defL} and \eqref{condL}, $s \in (0,1)$ and $a(\cdot) \in C_{loc}^{0,\alpha}(\mathbb{R}^n)$ satisfying \eqref{conda}. Let also $u$ be any weak solution of
		$$\E u = 0, \quad \mbox{in} \ \mathbb{R}^n,$$
	satisfying the following growth condition
		$$\Vert u \Vert_{L^\infty(B_R)} \le C R^\gamma \quad \mbox{for any} \ R\ge1 \ \mbox{and some} \ \gamma<2s.$$
	Then, $u$ is a polynomial of degree at most $\lfloor \gamma \rfloor$.
\end{theorem}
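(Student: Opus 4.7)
The plan is to follow the heat-kernel strategy used in the purely nonlocal setting of Ros-Oton-Serra, now powered by the global bounds for $H(t,\cdot)$ established in Proposition \ref{heatk}. First, I would choose $\delta>0$ small enough that $\gamma\le 2s-\delta$. Combining Proposition \ref{heatk}(a) with the hypothesis $|u(x)|\le C(1+|x|^{\gamma})$ shows that $u\ast H(t,\cdot)$ is absolutely convergent and defines a smooth function for every $t>0$, because the decay of $H(1,\cdot)$ against the weight $1+|\cdot|^{2s-\delta}$ strictly dominates the polynomial growth of $u$. Rescaling $H(t,\cdot)$ from $H(1,\cdot)$ via the Fourier symbol and differentiating under the integral gives analogous statements for any spatial derivative.

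The core of the proof is the reproducing identity $u = u\ast H(t,\cdot)$ for every $t>0$. Formally this comes from differentiating in $t$ and using $\partial_t H = -\E H$, which yields
\[
\partial_t\bigl(u\ast H(t,\cdot)\bigr) = -u\ast \E H(t,\cdot) = -\E\bigl(u\ast H(t,\cdot)\bigr) = -(\E u)\ast H(t,\cdot)= 0,
\]
together with the fact that $H(0,\cdot)$ is the Dirac mass. Rigorously, one tests the weak form of $\E u = 0$ against $H$ (or a suitable mollification) and uses the symbol bounds (\ref{FSbounds}) to justify the manipulations.

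Once this identity is in hand, writing $\nabla u(x) = (u\ast \nabla H(t,\cdot))(x)$ and invoking Proposition \ref{heatk}(b) together with the natural scaling of $H(t,\cdot)$ yields a bound of the form $\|\nabla u\|_{L^\infty(B_R)}\le C\,R^{\gamma-1}$, so that each differentiation strips one power from the growth exponent. Iterating this $\lfloor\gamma\rfloor+1$ times produces $\|D^k u\|_{L^\infty(B_R)}\le C\,R^{\gamma-k}\to 0$ as $R\to\infty$, forcing $D^k u\equiv 0$ and hence $u$ to be a polynomial of degree at most $\lfloor\gamma\rfloor$.

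The principal obstacle is the lack of translation invariance of $\E$: the weight $a(x)$ in the divergence term breaks convolution symmetry, so the relation $\E(u\ast H) = (\E u)\ast H$ used formally above does not strictly hold. Handling this --- either by working with the genuine two-variable heat kernel $H(t,x,y)$ of $\E$ and controlling its spatial regularity through the H\"older continuity of $a(\cdot)$, or by quantifying the commutator $[\E,\text{convolution with }H]$ via the symbol bounds (\ref{FSbounds}) --- will be the technical heart of the argument. A secondary subtlety is that $\E$ interpolates between differential orders $2s$ and $2$, so in picking the auxiliary time $t$ to extract the optimal gain in growth one must take $t$ comparable to $R^{2s}$ (the regime relevant as $R\to\infty$) rather than to $R^2$.
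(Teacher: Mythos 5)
Your proposal and the paper's proof both hinge on the reproducing identity $u = u * H(t,\cdot)$, but they diverge fundamentally in how the polynomial conclusion is extracted, and your route has gaps the paper's route is specifically designed to avoid.

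The paper never differentiates the heat kernel and never uses $H(t,\cdot)$ for $t\ne 1$ in the estimates. After establishing $v = H(1,\cdot)*v$ for the rescaling $v(x)=\rho^{-\gamma}u(\rho x)$, it extracts only a \emph{small} H\"older gain $[v]_{C^\tau(B_1)}\le C$ by splitting the convolution at scale $K$, using Proposition~\ref{heatk}$(b)$ for the near field and Proposition~\ref{heatk}$(a)$ for the far field, and optimizing over $K$. It then transfers this to $[u]_{C^\tau(B_\rho)}\le C\rho^{\gamma-\tau}$, forms the incremental quotient $u_h^\tau=(u(\cdot+h)-u)/|h|^\tau$ (which grows like $R^{\gamma-\tau}$), and re-runs the entire argument. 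Finitely many iterations of a gain $\tau$ suffice to push the exponent below zero and conclude. All heat-kernel input is confined to the $t=1$ slice, precisely because the mixed operator $\E$ has no exact scaling.

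Your proposal instead tries to gain one full derivative per step by writing $\nabla u = u*\nabla H(t,\cdot)$ and using scaling in $t$, and there are three concrete problems. First, Proposition~\ref{heatk}$(b)$ gives only $[H(1,\cdot)]_{C^{0,1}}\le C$, i.e.\ an $L^\infty$ bound on $\nabla H(1,\cdot)$; your argument needs a \emph{weighted $L^1$ bound} $\int(1+|y|)^\gamma|\nabla H(t,y)|\,dy<\infty$, which is strictly stronger and is not proved (nor is it obviously deducible from $(a)$ and $(b)$, since $H_L(1,\cdot)$ has only power-law tails $\sim|x|^{-n-2s}$, so $\nabla H$ inherits slow decay too). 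Second, even granting such a bound, the naive estimate $|\nabla u(x)|\le\int|u(x-y)||\nabla H(t,y)|\,dy$ does not yield the claimed $R^{\gamma-1}$ when $\gamma\ge1$: to extract the gain you must first exploit the cancellation $\int\nabla H(t,\cdot)=0$ and write $\nabla u(x)=\int\bigl(u(x-y)-u(x)\bigr)\nabla H(t,y)\,dy$ before estimating, a step your sketch omits. Third, and most seriously, the iteration over $t$ needs a scaling law $H(t,x)\approx t^{-n/2s}\Phi(t^{-1/2s}x)$ for $t$ large; for the mixed operator this fails because $L$ scales at order $2s$ and $\Delta$ at order $2$, so there is no self-similar heat kernel, and the asymptotic statement that the nonlocal part dominates as $t\to\infty$ would itself require a nontrivial heat-kernel estimate that the paper never proves (because it never needs it). You flag this scaling issue and the non-translation-invariance of $\E$ as obstacles, which is honest, but leaving them unresolved leaves a real hole where the paper's $C^\tau$-gain-plus-incremental-quotient scheme is a complete argument using only what Proposition~\ref{heatk} provides.
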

\begin{proof}
	Consider $\rho\ge1$ and define $v(x) \coloneqq\rho^\gamma u(\rho x)$. Since $\E(u)=0$, then $\E (v) =0$ in the whole $\mathbb{R}^n$. Moreover, by the growth hypothesis on $u$ we have
		\begin{equation}\label{growthv}
			\|v\|_{L^\infty(B_R)} \le C R^\gamma.
		\end{equation}	
	We claim that
		\begin{equation}\label{claim1}
			v \equiv H(1,\cdot) * v,
		\end{equation}
	where \( H \) denotes the heat kernel associated with the operator \( \mathcal{E} \), and \( * \) stands for convolution. To justify this identity, observe that \( v \) is a weak solution to \( \mathcal{E}(v) = 0 \) in \( \mathbb{R}^n \). Therefore, for every test function \( \eta \in C_0^\infty\big((0,1)\times \mathbb{R}^n\big) \), we have
		\begin{equation}\label{*}
			\int_{\mathbb{R}^n} v(x-z)\, \mathcal{E} \eta(t,x)\, dx = 0,
		\end{equation}
	for all \( (t,x) \in (0,1) \times \mathbb{R}^n \) and fixed \( z \in \mathbb{R}^n \).
	
	Now, define
		$$\omega(t,x) := (H(t,\cdot) * v)(x).$$
	Using the growth estimate for \( v \) (cf. \eqref{growthv}) and Proposition \ref{heatk}(a), it follows that \( \omega \) is a weak solution of the parabolic equation
		$$\partial_t \omega = \mathcal{E} \omega \quad \text{in } (0,+\infty) \times \mathbb{R}^n.$$
	
	By integrating by parts in time, we find
	\begin{align*}
		-\int_{0}^{1} \int_{\mathbb{R}^n} \omega\, \partial_t \eta\, dx\, dt
		&= \int_{0}^{1}\int_{\mathbb{R}^n} \partial_t \omega\, \eta\, dx\, dt - \int_{\mathbb{R}^n} \omega(1,x)\eta(1,x)\, dx + \int_{\mathbb{R}^n} \omega(0,x)\eta(0,x)\, dx \\
		&= \int_{0}^{1}\int_{\mathbb{R}^n} \omega\, \mathcal{E} \eta\, dx\, dt,
	\end{align*}
	where in the last step we used that $\omega$ is a weak solution.
	
	From the definition of $ \omega $, it follows that
		$$-\int_{0}^{1} \int_{\mathbb{R}^n} \omega\, \partial_t \eta\, dx\, dt
	= \int_{0}^{1} \int_{\mathbb{R}^n} \int_{\mathbb{R}^n} H(t,z)\, v(x-z)\, \mathcal{E} \eta(t,x)\, dz\, dx\, dt.$$	
	With the aim to use the Fubini’s theorem to change the order of integration, we need to verify the integrability:
		$$\int_{0}^{1} \int_{\mathbb{R}^n} \int_{\mathbb{R}^n} H(t,z)\, |v(x-z)|\, |\mathcal{E} \eta(t,x)|\, dx\, dz\, dt < \infty.$$
	
	Note that, due to the growth control \eqref{growthv}, we have
		$$\int_{\mathbb{R}^n} |v(x-z)|\, |\mathcal{E} \eta(t,x)|\, dx \le C (1 + |z|)^\gamma,$$
	for some constant \( C > 0 \). Therefore,
		$$\int_{0}^{1} \int_{\mathbb{R}^n} H(t,z) \int_{\mathbb{R}^n} |v(x-z)|\, |\mathcal{E} \eta(t,x)|\, dx\, dz\, dt
	\le C \int_{0}^{1} \int_{\mathbb{R}^n} H(t,z)\, (1 + |z|)^\gamma\, dz\, dt.$$	
	To estimate the last integral, we split the domain of integration:
	
	\begin{itemize}
		\item For $|z| < 1$, we estimate
			$$\int_{[|z| < 1]} H(t,z)\, (1 + |z|)^\gamma\, dz \le 2^\gamma \int_{\mathbb{R}^n} H(t,z)\, dz = 2^\gamma.$$
		
		\item For $|z| > M $ (for some large $ M > 0 $), the Gaussian upper bound on the heat kernel gives
		$$H(t,z) \le C t^{-\frac{n}{2s}} e^{-\frac{|z|^{2s}}{t}} \le C |z|^{-(n + 2s)},$$
		and hence
			$$\int_{[|z| > M]} H(t,z)\, (1 + |z|)^\gamma\, dz \le C \int_{[|z| > M]} |z|^{\gamma - (n + 2s)}\, dz.$$
		Passing to spherical coordinates, we obtain
			$$\int_{[|z| > M]} |z|^{\gamma - (n + 2s)}\, dz
		= C \int_{M}^{\infty} r^{\gamma - 2s - 1}\, dr < \infty,$$
		since $ \gamma < 2s$.
		
		\item Finally, on the annulus \( 1 \le |z| \le M \), the integrand is continuous and bounded, so
		$$\int_{[1 \le |z| \le M]} H(t,z)\, (1 + |z|)^\gamma\, dz \le C.$$
	\end{itemize}	
	By combining these estimates, we conclude that
		$$\int_{0}^{1} \int_{\mathbb{R}^n} H(t,z)\, (1 + |z|)^\gamma\, dz\, dt \le C,$$
	as required. Therefore, by Fubini’s theorem,
		$$-\int_{0}^{1} \int_{\mathbb{R}^n} \omega\, \partial_t \eta\, dx\, dt
	= \int_{0}^{1} \int_{\mathbb{R}^n} \int_{\mathbb{R}^n} H(t,z)\, v(x-z)\, \mathcal{E} \eta(t,x)\, dx\, dz\, dt \stackrel{\eqref{*}}{=} 0.$$	
	By the arbitrariness of $\eta \in C_0^\infty((0,1)\times \mathbb{R}^n)$, we get
		$$\omega(1,x) = \omega(0,x),$$
	which establishes the desired identity \eqref{claim1}.
	
	Next, we claim that
		\begin{equation}\label{Holdv}
			[v]_{C^\tau(B_1)} \le C,
		\end{equation}
	for some $\tau>0$ and $C>0$ depending only on $n,\lambda,\Lambda,\gamma$.
	
	Indeed, fix $x,x'\in B_1$, with $x\neq x'$. Using identity \eqref{claim1}, we write for any $K>0$:
		\begin{eqnarray}\label{I12}
			|v(x)-v(x')| &=& \big|\big(H(1,\cdot)*v\big)(x) - \big(H(1,\cdot)*v\big)(x')\big| \nonumber\\
			&=& \left|\int_{\mathbb{R}^n} \Big(H(1,x-y)-H(1,x'-y)\Big)v(y) \ dy\right| \nonumber \\
			&\le& \left|\int_{[|y|\le K]} \Big(H(1,x-y)-H(1,x'-y)\Big)v(y) \ dy\right| \nonumber \\
			&& + 2\sup_{x \in B_1} \left|\int_{[|y|\ge K]} H(1,x-y)v(y) \ dy\right| \nonumber\\
			&\coloneqq& I_1 + I_2.
		\end{eqnarray}
	We now estimate the terms $I_1$ and $I_2$. Using Proposition \ref{heatk} $(b)$ and \eqref{growthv}
		$$|I_1| \le \|v\|_{L^\infty(B_K)} \int_{[|y|\le K]} |x-x'| \ dy \le CK^{\gamma+n}|x-x'|.$$
	While, by using Proposition \ref{heatk} $(a)$ with $\delta>0$ satisfying $2\delta=2s-\gamma$, we obtain
		\begin{eqnarray*}
			\int_{[|y|\ge K]} H(1,x-y) \ |v(y)|\ dy &\le& \int_{[|y|\ge K]} C \int_{[|y|\ge K]} H(1,x-y) \dfrac{|y|^\gamma}{(1+|y|)^{\gamma+\delta}} \\
			&\le& CK^{-\delta} \int_{\mathbb{R}^n} H(1,x-y) (1+|y|)^{2s-\delta} \ dy \\
			&\le& CK^{-\delta},
		\end{eqnarray*}
	leading to $I_2 \le CK^{-\delta}$.	Thus, by combination of $I_1$ and $I_2$, \eqref{I12} turns into
		$$|v(x)-v(x')| \le C K^{n+\gamma}|x-x'| + C K^{-\delta}.$$
	Since $K>0$ is arbitrary, we can choose
		$$K=|x-x'|^{-\frac{\tau}{\delta}}, \quad \mbox{with} \ 1-\dfrac{(n+\gamma)\tau}{\delta} = \tau.$$
	Substituting this choice of $K$, we get
		$$|v(x)-v(x')| \le C|x-x'|^{-\frac{\tau(n+\gamma)}{\delta}}|x-x'| + C|x-x'|^\tau \le C|x-x'|^\tau,$$
	for some $\tau>0$. This proves the H\"{o}lder continuity estimate \eqref{Holdv}.
	
	From the definition of $v$, the H\"{o}lder estimate \eqref{Holdv} implies that
		$$[u]_{C^\tau(B_\rho)} \le C \rho^{\gamma-\tau}, \quad \mbox{for any} \ \rho \ge1.$$
	Consider now the incremental quotient
		$$u_h^\tau \coloneqq \dfrac{u(\cdot + h)-u}{|h|^\tau},$$
	which satisfies the estimate
		$$\|u_h^\tau\|_{L^\infty(B_R)} \le C R^{\gamma -\tau}.$$
	Therefore, by applying the previous argument with $v$ replaced by $u_h^\tau$ and $\gamma$ replaced by $\gamma-\tau$, we deduce that
		$$[u_h^\tau]_{C^\tau(B_R)} \le CR^{\gamma-2\tau}.$$
	This yields
		$$[u]_{C^{2\tau}(B_R)} \le CR^{\gamma-2\tau}.$$
	By iterating this process, we obtain by induction
		$$[u]_{C^{N\tau}(B_R)} \le CR^{\gamma-N\tau}.$$
	Now choose the smallest integer $N$ such that $\gamma-N\tau<0$. Then, letting $R\to\infty$ we conclude that
		$$[u]_{C^{N\tau}(\mathbb{R}^n)} = 0.$$
	This implies that $u$ must be a polynomial of degree at most $\lfloor \gamma\rfloor$.
\end{proof}

\begin{corollary}\label{coroLiouville}
	Let $\E$ be any operator defined as in $(P)$, with $L$ satisfying \eqref{defL} and \eqref{condL}, $s \in (0,1)$, and $a(\cdot) \in C_{loc}^{0,\alpha}(\mathbb{R}^n)$ satisfying \eqref{conda}. Consider $u$ any function satisfying, in the weak sense
		$$\E\Big(u(\cdot+h)-u(\cdot)\Big)=0, \quad \mbox{in} \ \mathbb{R}^n \; \; \mbox{for any} \ h \in \mathbb{R}^n.$$
	Suppose that $u$ satisfies
		$$[u]_{C^\tau(B_R)} \le C R^\gamma, \quad \mbox{for any} \ R\ge1,$$
	for some $\gamma<2s$. Then, $u$ is a polynomial of degree at most $\lfloor\gamma+\tau\rfloor$.
\end{corollary}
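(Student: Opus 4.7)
My plan is to reduce the claim to Theorem \ref{Liouville1} via an incremental quotient, and then refine the degree bound using the Hölder growth condition. For fixed $h \in \mathbb{R}^n \setminus \{0\}$, set
$$v_h(x) \coloneqq \frac{u(x+h) - u(x)}{|h|^\tau}.$$
The hypothesis $\E\big(u(\cdot+h) - u(\cdot)\big) = 0$ in $\mathbb{R}^n$, combined with the fact that $|h|^\tau$ is a constant in $x$, gives $\E v_h = 0$ weakly in $\mathbb{R}^n$. Moreover, from the Hölder growth hypothesis, for $R \ge 1$ one has
$$\|v_h\|_{L^\infty(B_R)} \le [u]_{C^\tau(B_{R+|h|})} \le C(R + |h|)^\gamma \le C_h R^\gamma,$$
with $C_h$ independent of $R$. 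Since $\gamma < 2s$, Theorem \ref{Liouville1} applies and forces $v_h$ to be a polynomial in $x$ of degree at most $k \coloneqq \lfloor \gamma \rfloor$; consequently the same is true of $\Delta_h u \coloneqq u(\cdot + h) - u$ for every $h \in \mathbb{R}^n$.

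Next, I propagate this polynomial structure from the first differences back to $u$ itself. Since the operator $\Delta_{h'}$ lowers the polynomial degree by one, an easy induction yields $\Delta_{h_{k+2}} \cdots \Delta_{h_1} u \equiv 0$ for any $h_1, \dots, h_{k+2} \in \mathbb{R}^n$. Because $u$ is continuous (being locally of class $C^\tau$), Fréchet's classical characterization of polynomials via iterated finite differences then implies that $u$ is a polynomial in $x$ of degree at most $k + 1 = \lfloor \gamma \rfloor + 1$.

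Finally, I sharpen the degree estimate using the Hölder growth bound. Let $N \coloneqq \deg u$ and let $P_N$ denote the (nonzero) homogeneous top-degree part of $u$. A direct computation (based on the mean value theorem on $B_R$ together with the elementary estimate $|x-y|^{1-\tau} \le (2R)^{1-\tau}$) shows that for any nontrivial homogeneous polynomial $P_N$ of degree $N \ge 1$ one has $[P_N]_{C^\tau(B_R)} \asymp R^{N - \tau}$ as $R \to \infty$; the lower-degree contributions are of order $R^{N-1-\tau} = o(R^{N-\tau})$ and therefore cannot cancel the leading asymptotic. Combined with the hypothesis $[u]_{C^\tau(B_R)} \le C R^\gamma$, this forces $N - \tau \le \gamma$, i.e., $N \le \lfloor \gamma + \tau \rfloor$, which is the desired conclusion.

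The main technical point is arranging the growth bound on $\|v_h\|_{L^\infty(B_R)}$ so that Theorem \ref{Liouville1} actually applies to $v_h$; once the constant is allowed to depend on the fixed parameter $h$, this is immediate from the Hölder growth hypothesis. The subsequent Fréchet iteration and the asymptotic computation of $C^\tau$-seminorms of monomials are essentially routine once set up correctly.
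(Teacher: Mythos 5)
Your proof is correct and follows the same route as the paper's (very terse) argument: apply Theorem \ref{Liouville1} to the first differences $u(\cdot+h)-u$, conclude they are polynomials, and deduce that $u$ is itself a polynomial of the claimed degree. You carefully supply the three ingredients the paper leaves implicit — the growth bound $\|v_h\|_{L^\infty(B_R)}\le C_h R^\gamma$ needed to invoke the theorem, the Fréchet characterization of polynomials via iterated differences that justifies passing from ``$\Delta_h u$ is a polynomial for all $h$'' to ``$u$ is a polynomial,'' and the scaling asymptotic $[P_N]_{C^\tau(B_R)}\asymp R^{N-\tau}$ that pins down the degree bound $\lfloor\gamma+\tau\rfloor$.
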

\begin{proof}
	To conclude, we just need apply Theorem \ref{Liouville1} to the difference function
		$$v(x)\coloneqq u(x+h)-u(x),$$
	and deduce that $v$ is a polynomial. Since this argument is valid for every $h \in \mathbb{R}^n$, it follows that $u$ must be a polynomial as well. Additionally, the prescribed growth condition ensures that the degree of $u$ is at most $\lfloor \gamma + \tau\rfloor$.
\end{proof}

\section{Interior regularity (part one)}\label{Sec3}

In this section we will prove the first interior regularity, namely Theorem \ref{reg1}. The idea is use a compactness argument and the Liouville-type theorem established in the previous section. We will start wi the following result:

\begin{lemma}\label{lemcontr}
	Consider $\{\E_k\}_{k\in\mathbb{N}}$ a sequence of operators defined by
		$$\E_k v = L_k v - \mbox{div}(a_k(x)\nabla v),$$
	with $\{a_k\} \in C_{loc}^{0,\alpha}(\mathbb{R}^n)$, and $L_k$ in the form \eqref{defL} satisfying \eqref{condL}. Let $\{u_k\}_{k\in \mathbb{N}}$ and $\{f_k\}_{k\in \mathbb{N}}$ be sequences satisfying
		$$\E_k u_k = f_k, \quad \mbox{in} \ \Omega\subset \mathbb{R}^n,$$
	in the weak sense. Assume that $L_k$ have spectral measures $\mu_k$ converging to a spectral measure $\mu$. Let $L$ be the operator associated to $\mu$, and suppose that
		\begin{itemize}
			\item the sequence $\{u_k\}$ is bounded in $H^1(\Omega)$, while $\{a_k\}$ is uniformly bounded in $C_{loc}^{0,\alpha}(\mathbb{R}^n)$;
			
			\item $u_k \to u$ uniformly in compact sets of $\mathbb{R}^n$;
			
			\item $f_k \to f$ uniformly in $\Omega$;
			
			\item $|u_k(x)|\le C (1+|x|^{2s-\varepsilon})$ for some $\varepsilon>0$ and any $x \in \mathbb{R}^n$;
		\end{itemize}
	Then, $u$ satisfies
		$$\E u = f \quad \mbox{in} \ \Omega,$$
	in the weak sense, where $\E v = Lv - \mbox{div}(a(x)\nabla v)$, for some $a \in C_{loc}^{0,\alpha_1}(\Omega)$ with $\alpha_1 \in (0,\alpha)$.
\end{lemma}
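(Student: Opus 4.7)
The strategy is a direct passage to the limit in the weak formulation of $\mathcal{E}_k u_k = f_k$. Fix an arbitrary test function $v \in C_0^\infty(\Omega)$; by hypothesis,
$$\int_{\mathbb{R}^n} u_k\, L_k v \, dx + \int_{\mathbb{R}^n} a_k(x) \nabla u_k \cdot \nabla v \, dx = \int_{\Omega} f_k v \, dx, \qquad \forall k \in \mathbb{N},$$
and it suffices to prove that each term converges to its natural analogue with $(u, a, L, f)$ in place of $(u_k, a_k, L_k, f_k)$. Before passing to the limit, I would extract subsequences twice. First, the uniform bound of $\{a_k\}$ in $C_{\mathrm{loc}}^{0,\alpha}(\mathbb{R}^n)$ combined with Arzel\`a--Ascoli delivers a limit $a \in C_{\mathrm{loc}}^{0,\alpha_1}(\mathbb{R}^n)$, for any $\alpha_1 \in (0,\alpha)$, such that $a_k \to a$ locally uniformly. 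Second, the boundedness of $\{u_k\}$ in $H^1(\Omega)$ provides (up to a further subsequence) $\nabla u_k \rightharpoonup \nabla u$ weakly in $L^2_{\mathrm{loc}}(\Omega)$; uniqueness of distributional limits and the uniform-on-compacts convergence $u_k \to u$ force the weak limit to coincide with $\nabla u$.

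The local term and the right-hand side are then straightforward. Since $v$ is compactly supported in $\Omega$, we have $a_k \nabla v \to a \nabla v$ strongly in $L^2(\mathrm{supp}\,v)$, so weak--strong pairing yields
$$\int_{\mathbb{R}^n} a_k(x)\, \nabla u_k \cdot \nabla v \, dx \longrightarrow \int_{\mathbb{R}^n} a(x)\, \nabla u \cdot \nabla v \, dx,$$
while $f_k \to f$ uniformly on $\Omega$ trivially gives $\int_\Omega f_k v \, dx \to \int_\Omega f v \, dx$. Moreover, since $|u_k(x)| \le C(1+|x|^{2s-\varepsilon})$ and $u_k \to u$ locally uniformly, the limit $u$ inherits the same growth control and thus qualifies as a weak solution in the sense of Definition~1.

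The nonlocal term is the heart of the matter. Because $v \in C_0^\infty(\Omega)$, I would establish two uniform-in-$k$ facts about $L_k v$. The pointwise decay
$$|L_k v(x)| \le C (1+|x|)^{-n-2s}, \qquad x \in \mathbb{R}^n,$$
follows from splitting the inner $r$-integral into $|r| \le 1$ (where one uses a second-order Taylor expansion of $v$, which is controlled using $\mu_k(S^{n-1}) \le \Lambda_1$) and $|r| \ge 1$ (where, for $|x|$ large, the numerator vanishes except on a bounded range and the integrand is dominated by $|r|^{-1-2s}$). For each fixed $x$, the inner $r$-integral of $v$ defines a bounded continuous function on $S^{n-1}$, so the weak convergence $\mu_k \to \mu$ gives the pointwise limit $L_k v(x) \to L v(x)$. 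Now, to pass to the limit in $\int u_k L_k v \, dx$, split into $B_R$ and $\mathbb{R}^n \setminus B_R$: on $B_R$, the uniform convergence of $u_k$ combined with the dominating function $C(1+|x|^{2s-\varepsilon})(1+|x|)^{-n-2s} \le C(1+|x|)^{-n-\varepsilon}$ enables dominated convergence, while on $\mathbb{R}^n \setminus B_R$ the same dominant function shows the tail is uniformly small in $k$ as $R\to\infty$. Combining the three convergences yields the desired identity for $u$.

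The main obstacle, as anticipated, is the nonlocal step: one must simultaneously control pointwise convergence induced by $\mu_k \rightharpoonup \mu$ and produce a uniform-in-$k$ integrable majorant that tames both the singular kernel near the origin and the power-type growth of $u_k$ at infinity. The compactness $\{v \in C_0^\infty(\Omega)\}$ and the ellipticity bound $\int_{S^{n-1}} d\mu_k \le \Lambda_1$ are precisely what allow one to extract such a majorant; the exponent $\varepsilon > 0$ in the growth of $u_k$ is essential, because at the borderline $|u_k(x)| \le C(1+|x|^{2s})$ the tail term $C(1+|x|)^{-n}$ would only be marginally non-integrable.
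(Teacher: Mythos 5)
Your argument tracks the paper's proof almost line for line: the uniform $C^{0,\alpha}_{\mathrm{loc}}$ bound on $a_k$ plus Arzel\`a--Ascoli gives the limit $a$, weak $L^2$ convergence of $\nabla u_k$ paired against the strong $L^2$ limit $a\nabla v$ handles the local term, and a decay-plus-growth majorant handles the nonlocal term. The refinements you add --- deducing the pointwise limit $L_k v(x)\to Lv(x)$ from the weak convergence $\mu_k\rightharpoonup\mu$ tested against the continuous, bounded inner $r$-integral, and explicitly splitting $|r|\le1$ from $|r|\ge1$ --- are correct and slightly sharpen the paper's terse wording.

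There is, however, a gap that you share with the paper: the pointwise decay $|L_k v(x)|\le C(1+|x|)^{-n-2s}$ is not true for general spectral measures $\mu_k$ satisfying only \eqref{condL}. For $v\in C_0^\infty(B_\rho)$ and $|x|>2\rho$, the inner $r$-integral is $O(|x|^{-1-2s})$ whenever the line $x+\mathbb{R}\theta$ meets $B_\rho$, and zero otherwise; using only $\mu_k(S^{n-1})\le\Lambda_1$, the best \emph{uniform pointwise} bound is therefore $|L_kv(x)|\le C(1+|x|)^{-1-2s}$. The faster rate $(1+|x|)^{-n-2s}$ would need the contributing angular set to carry $\mu_k$-mass $O((\rho/|x|)^{n-1})$, which holds when $\mu_k$ is absolutely continuous with bounded density, but not, for instance, when $\mu_k=\sum_i(\delta_{e_i}+\delta_{-e_i})$, where the decay along the coordinate axes is exactly $|x|^{-1-2s}$. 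Multiplying by $|u_k|\le C(1+|x|)^{2s-\varepsilon}$ then gives only $C(1+|x|)^{-1-\varepsilon}$, which fails to be integrable on $\mathbb{R}^n$ for $n\ge 2$ and $\varepsilon$ small, so the pointwise dominated-convergence step as written does not close. The conclusion is nevertheless correct: by Tonelli one checks that
$$\int_{|x|>R}|u_k(x)|\,|L_kv(x)|\,dx\le C\int_{S^{n-1}}\int_{|r|>R-\rho}\frac{(1+|r|)^{2s-\varepsilon}}{|r|^{1+2s}}\,dr\,d\mu_k\le C\Lambda_1\int_{R-\rho}^\infty r^{-1-\varepsilon}\,dr\longrightarrow 0$$
uniformly in $k$ as $R\to\infty$ (the missing $|x|^{-(n-1)}$ is recovered because, for $|x|$ large, $L_kv$ is supported in tubes of $O(\rho)$ cross-sectional width about the rays $\mathbb{R}\,\operatorname{supp}\mu_k$), and combining this uniform tail-smallness with the locally uniform convergence $u_k\to u$, $L_kv\to Lv$ on $B_R$ gives $\int u_kL_kv\to\int uLv$. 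You should replace the pointwise power-law majorant with this Tonelli-type tail estimate.
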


\begin{proof}
	Since $\mathcal{E}_k u_k = f_k$, we have, for any $\eta \in C_0^\infty(\Omega)$,
		$$\int_{\mathbb{R}^n} u_k \, L_k \eta \, dx - \int_{\Omega} a_k(x) \nabla u_k \cdot \nabla \eta \, dx = \int_{\Omega} f_k \eta \, dx.$$
	On the other hand, using the inequality
		$$|\eta(x+y) + \eta(x-y) - 2\eta(x)| \le C \min\{1, |y|^2\},$$
	and applying Lebesgue's dominated convergence theorem, we obtain that $L_k \eta \to L \eta$ uniformly over compact subsets of $\mathbb{R}^n$, where $L$ is the operator associated with the limit $\mu$ of the measures $\mu_k$. Since $\eta$ is compactly supported in $\Omega$, we also have the bound
		$$|L_k \eta(x)| \le \frac{C}{1 + |x|^{n + 2s}}.$$
	Combining this with the growth estimate for $u_k$, it follows that
		$$|u_k L_k \eta| \le \frac{C}{1 + |x|^{n + \varepsilon}},$$
	for some $\varepsilon > 0$. Hence, by dominated convergence again,
		$$\int_{\mathbb{R}^n} u_k \, L_k \eta \, dx \to \int_{\mathbb{R}^n} u \, L \eta \, dx, \quad \text{for all } \eta \in C_0^\infty(\Omega).$$
	
	Next, since $\{a_k\}$ is uniformly bounded in $C^{0,\alpha}_{\text{loc}}(\mathbb{R}^n)$, the Arzelà-Ascoli theorem ensures the existence of a function $a \in C^{0,\alpha_1}_{\text{loc}}(\mathbb{R}^n)$, for some $\alpha_1 \in (0,\alpha)$, such that
		$$a_k \to a \quad \text{uniformly on compact subsets of } \mathbb{R}^n.$$
	Moreover, as $\{u_k\}$ is bounded in $H^1(\Omega)$, we have
		$$\nabla u_k \rightharpoonup \nabla u \quad \text{weakly in } L^2(\Omega).$$
	Consequently,
		$$\int_{\Omega} a_k(x) \nabla u_k \cdot \nabla \eta \, dx \to \int_{\Omega} a(x) \nabla u \cdot \nabla \eta \, dx, \quad \text{for all } \eta \in C_0^\infty(\Omega).$$
	Finally, since
		$$\int_{\Omega} f_k \eta \, dx \to \int_{\Omega} f \eta \, dx,$$
	we conclude that $u$ is a weak solution to the equation
		$$\mathcal{E} u \coloneqq L u - \operatorname{div}(a(x) \nabla u) = f \quad \text{in } \Omega.$$
\end{proof}

Before stating and proving the first interior regularity result, we present the key step required for its proof.

\begin{proposition}\label{prestep}
	Let $\mathcal{E}$ be an operator as defined in $(P)$, where $a(\cdot) \in C_{\mathrm{loc}}^{0,\alpha}(\mathbb{R}^n)$, for some $\alpha \in (0,1)$, satisfying \eqref{conda}, and $L$ satisfies \eqref{defL}–\eqref{condL} with $s \in (0,1)$. Suppose that $w \in C_0^\infty(\mathbb{R}^n)$ satisfies
		$$\E w = f \ \mbox{in} \ B_1,$$
	with $f \in C^\gamma(B_1)$, for some $\gamma \in (0,1)$. Assume that  $2s+\gamma+\alpha \notin \mathbb{N}$, and one of the conditions below is satisfied:
		\begin{enumerate}[$(a)$]
			\item If $\frac{1 - \alpha}{2} < s < 1$, assume that $\gamma \in (0,1)$ is such that $\lfloor 2s+\alpha+\gamma\rfloor \le 2$;
			
			\item If $0 < s \le \frac{1 - \alpha}{2}$, suppose in addition that $\gamma \in (0,1)$ is such that $2s + \gamma + \alpha \ge 1$.
		\end{enumerate}	
	Let $\gamma' \in (0, \gamma)$ satisfying $\lfloor 2s+\alpha+\gamma\rfloor < 2s+\alpha+\gamma'$, and $\gamma <2s+\gamma'$. Then the following estimate holds:
		$$[w]_{C^{2s+\alpha+\gamma}(B_{\frac{1}{2}})} \le C \Big([f]_{C^{\gamma}(B_1)} + \|w\|_{C^{2s+\alpha+\gamma'}(\mathbb{R}^n)}\Big).$$
\end{proposition}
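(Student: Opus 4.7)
The plan is to run a compactness-and-contradiction scheme in the spirit of \cite{RS}, with the Liouville-type result from the preceding section (Corollary \ref{coroLiouville}) as the rigidity input. Suppose the estimate fails; then there exist sequences $w_k, f_k, a_k, L_k$ with $\mathcal{E}_k w_k = f_k$ in $B_1$, uniformly bounded $[f_k]_{C^\gamma(B_1)} + \|w_k\|_{C^{2s+\alpha+\gamma'}(\mathbb{R}^n)}$, yet $[w_k]_{C^{2s+\alpha+\gamma}(B_{1/2})} \to \infty$. To isolate the scale and location of the blow-up, I introduce the Campanato-type quantity
$$\theta(r) \;:=\; \sup_{k}\,\sup_{\bar x \in B_{1/2}}\,\sup_{r' \ge r}\;(r')^{-(2s+\alpha+\gamma)}\inf_{\deg P \le N}\|w_k - P\|_{L^\infty(B_{r'}(\bar x))},\qquad N:=\lfloor 2s+\alpha+\gamma\rfloor.$$
The strict gap $N < 2s+\alpha+\gamma'$ combined with the a priori $C^{2s+\alpha+\gamma'}$ bound makes $\theta(r)$ finite for each $r > 0$, while the blow-up of the H\"older seminorm forces $\theta(r)\to\infty$ as $r\to 0^+$.

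Next, pick almost-maximizers $(k_m,\bar x_m,r_m)$ with $r_m\to 0^+$ together with near-optimal polynomials $P_m$, and form the blow-up
$$\tilde w_m(x) \;:=\; \frac{(w_{k_m}-P_m)(\bar x_m + r_m x)}{r_m^{2s+\alpha+\gamma}\,\theta(r_m)}.$$
The normalization yields $\inf_{\deg P\le N}\|\tilde w_m-P\|_{L^\infty(B_1)}\sim 1$ together with the standard dyadic growth $\|\tilde w_m\|_{L^\infty(B_R)}\le C\,R^{2s+\alpha+\gamma}$, while the a priori $C^{2s+\alpha+\gamma'}$ control — used crucially with the hypothesis $\gamma < 2s+\gamma'$ — yields a uniform $C^{2s+\alpha+\gamma'}$ estimate for $\tilde w_m$ on compact sets, hence Arzel\`a--Ascoli compactness. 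A direct scaling computation shows that $\tilde w_m$ weakly solves
$$L_{k_m}\tilde w_m - r_m^{2-2s}\operatorname{div}\!\big(\tilde a_m\,\nabla \tilde w_m\big) \;=\; \tilde f_m \quad\text{in } B_{1/(2r_m)},$$
where $\tilde a_m(x) = a_{k_m}(\bar x_m + r_m x)$ and $\tilde f_m$ packages the rescaled $f_{k_m}$-contribution together with a contribution from $\mathcal{E}_{k_m}P_m$. Up to subsequences, $\bar x_m\to\bar x_\infty$, $\tilde a_m \to a_\infty$ (constant) locally uniformly, and $\mu_{k_m}\rightharpoonup \mu_\infty$ weak-$\ast$ satisfying \eqref{condL}. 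Since $s<1$, the factor $r_m^{2-2s}$ in front of the divergence term vanishes and the local contribution drops out, and a careful analysis of $\tilde f_m$ (where the dichotomy between cases (a) and (b) enters, see below) shows $\tilde f_m\to 0$ locally uniformly. Lemma \ref{lemcontr} then yields a locally uniform limit $\tilde w_\infty$ solving $L_\infty \tilde w_\infty = 0$ in $\mathbb{R}^n$, with the inherited polynomial growth and the nondegeneracy $\inf_{\deg P\le N}\|\tilde w_\infty-P\|_{L^\infty(B_1)}\ge c_0>0$.

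To close the contradiction, interpolate the growth $|\tilde w_\infty(x)|\le C(1+|x|)^{2s+\alpha+\gamma}$ with the inherited $C^{2s+\alpha+\gamma'}$ regularity to obtain $[\tilde w_\infty]_{C^{2s+\alpha+\gamma'}(B_R)}\le C R^{\gamma-\gamma'}$. Since $\gamma-\gamma' < 2s$, exactly the content of $\gamma<2s+\gamma'$, Corollary \ref{coroLiouville}, applied with $\tau = 2s+\alpha+\gamma'$, forces $\tilde w_\infty$ to be a polynomial of degree at most $\lfloor(\gamma-\gamma')+(2s+\alpha+\gamma')\rfloor = N$, contradicting the nondegeneracy above. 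The main technical obstacle is handling the $\mathcal{E}_{k_m}P_m$ contribution to $\tilde f_m$, which is governed by the mismatch between the $r_m^{2s}$ scaling of $L$ and the $r_m^2$ scaling of the divergence term: this is precisely what forces the two-case split. In case (a), with $s>(1-\alpha)/2$, the polynomial $P_m$ may have degree up to $N\le 2$, and the factor $r_m^{2-2s}$ absorbs the quadratic divergence contribution of $\mathcal{E}_{k_m}P_m$ — with $L_{k_m}P_m$ interpreted weakly if $\deg P_m = 2$. In case (b), the extra hypothesis $2s+\alpha+\gamma\ge 1$ forces $N\le 1$, so $P_m$ is at most affine, $L_{k_m}P_m\equiv 0$ by symmetry of $\mu_{k_m}$, and only the first-order term $\operatorname{div}(\tilde a_m\nabla P_m)$ survives, once again controlled by the $r_m^{2-2s}$ prefactor.
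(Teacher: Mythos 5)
Your blow-up scaling is wrong in sign, and this error propagates to the key claim that "the local contribution drops out." Computing directly: with $\tilde w_m(x)=\lambda_m^{-1}(w_{k_m}-P_m)(\bar x_m+r_mx)$, $\lambda_m=r_m^{2s+\alpha+\gamma}\theta(r_m)$, one finds $L_{k_m}\tilde w_m(x)=\frac{r_m^{2s}}{\lambda_m}(L_{k_m}(w_{k_m}-P_m))(\bar x_m+r_m x)$ while $\operatorname{div}(\tilde a_m\nabla\tilde w_m)(x)=\frac{r_m^{2}}{\lambda_m}\operatorname{div}(a_{k_m}\nabla(w_{k_m}-P_m))(\bar x_m+r_m x)$. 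Plugging into $\mathcal{E}_{k_m}w_{k_m}=f_{k_m}$ and normalizing the nonlocal term yields
\[
L_{k_m}\tilde w_m(x)\;-\;r_m^{\,2s-2}\operatorname{div}\!\big(\tilde a_m\nabla\tilde w_m\big)(x)\;=\;\tilde f_m(x),
\]
so the exponent is $2s-2<0$, not $2-2s$: the coefficient on the divergence term \emph{blows up} as $r_m\to 0^+$, which is exactly what you expect since the second-order local operator is of higher order than the $2s$-order nonlocal one and therefore dominates under a point-scale blow-up. In your normalization the nonlocal term is the one that would be subordinate, not the local one; the alternative normalization $r_m^{2-2s}L_{k_m}\tilde w_m-\operatorname{div}(\tilde a_m\nabla\tilde w_m)=\cdots$ kills the \emph{nonlocal} part, and the limit would be a constant-coefficient \emph{local} equation. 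Either way your conclusion that $\tilde w_\infty$ solves $L_\infty\tilde w_\infty=0$ cannot be reached, and the subsequent appeal to Corollary~\ref{coroLiouville} as stated (which is formulated for the full mixed operator) does not then go through as you describe.

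There is a second, related gap. You fold $\mathcal{E}_{k_m}P_m$ into a right-hand side $\tilde f_m$ and claim to control it by the scaling prefactor, parenthetically remarking that $L_{k_m}P_m$ is "interpreted weakly" when $\deg P_m=2$. But $L_{k_m}$ of a genuine quadratic is not defined (the tail integral $\int|r|^{1-2s}\,dr$ diverges at infinity for every $s\in(0,1)$), so this cannot be dismissed in a sentence. The paper avoids this entirely by passing to incremental quotients $v_m(\cdot+h)-v_m$ and using the identity \eqref{delta33}: second differences of translates of a degree-$\le 2$ polynomial cancel, so the ill-defined term never appears. That difference trick also yields the clean right-hand-side bound $(\rho_m)^\gamma|h|^\gamma$ from the $C^\gamma$ seminorm of $f_k$ without any subtraction of a base value. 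Your Campanato-type choice of $\theta$ (best $L^\infty$-polynomial approximation) versus the paper's $C^{2s+\alpha+\gamma'}$-seminorm-weighted $\theta$ is a legitimate alternative implementation of the same idea and is not a problem by itself; the scaling sign and the treatment of $L_{k_m}P_m$, however, are genuine gaps that invalidate the argument as written.
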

\begin{proof}
	$(a)$ We proceed via contradiction. Assume the result fails. Then, for each $k \in \mathbb{N}$, there exist:
		\begin{itemize}
			\item a sequence of modular $\{a_k\} \subset C_{loc}^{0,\alpha}(\mathbb{R}^n)$;
			
			\item a sequence of functions $\{w_k\} \subset C_0^\infty(\mathbb{R}^n)$;
			
			\item right-hand sides $\{f_k\} \subset C^\gamma(B_1)$;
			
			\item and a family of operators $\{L_k\}$ of the form \eqref{defL} satisfying \eqref{condL}, with fixed ellipticity constants $\lambda_1, \Lambda_1 > 0$;
		\end{itemize}
	such that the following conditions are met:
		\begin{itemize}
			\item $\E_k w_k \coloneqq L_k w_k - \mbox{div}(a_k(x)\nabla w_k) = f_k$ in $B_1$;
			
			\item $[f_k]_{C^\gamma(B_1)} + \|w_k\|_{C^{2s+\alpha+\gamma'}(\mathbb{R}^n)} \le 1$;
			
			\item $\|w_k\|_{C^{2s+\alpha+\gamma}(B_{\frac{1}{2}})} \ge k$.
		\end{itemize}
Thus, the idea is use the Lemma \ref{lemcontr} to get a contradiction.

Our first goal is to show that the sequence $\{a_k\}$ is uniformly bounded. Indeed, suppose by contradiction there exists some compact set $K\subset B_1$, a subsequence, still denoted by $\{a_k\}$, and points $x_k \in K$ such that	
	$$|a_k(x_k)|\to \infty , \quad \mbox{as} \ k\to \infty.$$
Now, define $\delta_k = \|a_k\|_{L^\infty(B_{\frac{1}{2}})}^{-1/2}$ and $M_k = \|w_k\|_{C^{2s+\alpha+\gamma}(B_{\frac{1}{2}})}$. So, $\delta_k \to 0$ and $M_k \to \infty$ as $k \to \infty$. Consider also the rescaled function
	$$\widetilde{w}_k(x) \coloneqq \dfrac{w_k(x_k+\delta_k x)}{M_k}.$$
By construction,
	$$\|\widetilde{w}_k\|_{C^{2s+\alpha+\gamma'}(\mathbb{R}^n)} \le \dfrac{1}{M_k} \to 0.$$
Furthermore, the original equation $\E_k w_k = f_k$ becomes
	$$\dfrac{1}{\delta_k^{2s}} L_k \widetilde{w}_k - \dfrac{1}{\delta_k}\mbox{div}\Big(a_k(x_k+\delta_k x)\nabla \widetilde{w}_k\Big) = \dfrac{1}{M_k}f_k(x_k+\delta_k x).$$
Notice that the righ-hand side vanishes as $k\to\infty$. However, the divergence term dominates the left-hand side and behaves as $\|a_k\|_{L^\infty}^{3/2}\Delta \widetilde{w}_k$ at infinity. Then,
	$$\|a_k\|_{L^\infty}^{3/2}\Delta \widetilde{w}_k \to 0.$$
But this is possible only if $\widetilde{w}_k$ vanishes at infinity, what is impossible since $\|\widetilde{w}_k\|_{C^{2s+\gamma+\alpha}}\ge1$. That implies $\{a_k\}$ must be a uniformly bounded sequence, as claimed.
	
Throughout the proof, we will denote $\tau = \lfloor2s+\gamma +\alpha\rfloor$. Since $\tau < 2s+\gamma'+\alpha \le 2s+\gamma+\alpha$ we have
	\begin{equation}
		\sup_k \ \sup_{z \in B_{\frac{1}{2}}} \ \sup_{\rho>0} (\rho)^{\gamma'-\gamma}[w_k]_{C^{2s+\alpha+\gamma'}(B_{\rho}(z))} = \infty.
	\end{equation}
Next, we define
	$$\theta(\rho) \coloneqq \sup_k \ \sup_{z \in B_{\frac{1}{2}}} \ \sup_{\rho'>\rho} (\rho')^{\gamma'-\gamma}[w_k]_{C^{2s+\alpha+\gamma'}(B_{\rho'}(z))}.$$
The function $\theta$ satisfies:
	\begin{itemize}
		\item is monotone nonincreasing;
		
		 \item $\theta(\rho)<\infty$, for any $\rho>0$;
		
		 \item $\theta(\rho) \to \infty$, as $\rho\to0^+$;
		
		 \item $\theta(\rho)\rho^\alpha \to \infty$, as $\rho \to 0^+$.
	\end{itemize}

Now, for any positive integer $m$, by definition of $\theta(1/m)$, there exist $\rho_m' \ge 1/m$, $k_m$, and $z_m \in B_{\frac{1}{2}}$, for which
	$$(\rho_m')^{\gamma'-\gamma}[w_{k_m}]_{C^{2s+\alpha+\gamma'}(B_{\rho_m'}(z_m))}\ge \dfrac{1}{2}\theta\left(\frac{1}{m}\right)\ge \dfrac{1}{2}\theta(\rho_m').$$

To proceed, we recall the definition of the $\texttt{arg min}$ operator. Let $F : X \to \mathbb{R}$ be a real-valued function defined on a set $X$. Then,
	$$\underset{x \in X}{\texttt{arg min}}\ F(x)\coloneqq \Big\{y \in X \; \; : \;\; F(y) \le F(x), \; \mbox{for any} \ x \in X\Big\}.$$
In this context, for each $k\in\mathbb{N}$, $z \in \mathbb{R}^n$, and $\rho > 0$, we define
	$$P_{k,z,\rho}\coloneqq \underset{P \in \mathbb{P}_\tau}{\texttt{arg min}}\ \int_{B_\rho(z)} \Big(w_k(x)-P(x-z)\Big)^2dx,$$
where $\mathbb{P}_\tau$ denotes the linear space of polynomials of degree at most $\tau$ with real coefficients. In other words, $P_{k,z,\rho}(\cdot - z)$ is the polynomial of degree at most $\tau$ (in the variable $x - z$) that best approximates $w_k$ on $B_\rho(z)$ in the least-squares sense.

From now on, for simplicity we denote $P_m = P_{k_m,z_m,\rho_m'}$. We also consider the blow-up sequence
	\begin{equation}
		v_m(x) \coloneqq \dfrac{w_{k_m}(z_m+\rho_m x) - P_m(\rho_m x)}{(\rho_m)^{2s+\alpha+\gamma}\theta(\rho_m)}.
	\end{equation}
By definition of $P_m$ and changing variables we know that
	\begin{equation}\label{ort}
		\int_{B_1} v_m(x) Q(x) \ dx =0 \quad \mbox{for any} \ Q \in \mathbb{P}_\tau.
	\end{equation}
Thus, the main goal now is to prove that $\big\{v_m(\cdot+h)-v_m(\cdot)\big\}$ satisfy, for any $h\in\mathbb{R}^n$ the assumptions of Lemma \ref{lemcontr}. First of all, by same reasoning as in \cite[Proposition 3.2]{RS} we know that $v_m$ satisfies
	\begin{enumerate}[$(i)$]
		\item $[v_m]_{C^{2s+\alpha+\gamma'}(B_1)} \ge \dfrac{1}{2}$;
		
		\item $[v_m]_{C^{2s+\alpha+\gamma'}(B_R)} \le R^{\gamma-\gamma'}$, for any $R\ge1$;
		
		\item $\|v_m\|_{L^\infty(B_1)}\le C$;
		
		\item $[v_m]_{C^{\sigma}(B_R)} \le CR^{2s+\alpha+\gamma-\sigma}$, for any $\sigma \in [0,2s+\alpha+\gamma']$;
		
		\item $\displaystyle \sup_{|l|=\tau} \ \underset{B_1}{osc} \ D^l v_m \ge \frac{1}{4}.$
	\end{enumerate}
Note that $(iv)$ yields
	\begin{equation}\label{vi}
		|v_m(x+h)-v_m(x)| \le C R^{2s-\varepsilon} \quad  \mbox{for any}\ R\ge1 \ \mbox{and some}\ \varepsilon\in (0,2s+\gamma'-\gamma).
	\end{equation}
Now, let us prove a sequence of claims about $\{v_m\}$ that will help us in our objective.

\vspace{.5cm}
\noindent {\bf Claim 1:} The sequence $\{v_m\}$ converges in $C^{\frac{2s+\alpha+\gamma'+\tau}{2}}_{loc}(\mathbb{R}^n)$ to a function $v \in C^{2s+\alpha+\gamma'}_{loc}(\mathbb{R}^n)$, which satisfies the hypotheses of Corollary \ref{coroLiouville}.

This convergence is a consequence of the uniform $C^{\frac{2s+\alpha+\gamma'+\tau}{2}}$ bounds on compact subsets, given by estimate $(iv)$, along with the Arzelà–Ascoli theorem and a standard diagonalization procedure. The choice of exponent $\frac{2s+\alpha+\gamma'+\tau}{2}$ guarantees that it is less than $2s+\alpha+\gamma'$ and greater than both $\tau$ and $2s$.

In addition, by taking the limit as $m \to \infty$ in $(iv)$ for some $\sigma \in (\alpha+\gamma,2s+\alpha+\gamma']$ (this choice is possible because $\gamma < 2s+\gamma'$), we deduce
	$$[v]_{C^\sigma(B_R)}\le C R^{\gamma_0}, \quad \mbox{for any} \ R\ge1,$$
where $\gamma_0 = 2s+\alpha + \gamma - \sigma < 2s$. This shows that $v$ satisfies the required growth condition in Corollary \ref{coroLiouville}, completing the proof of the claim.

\vspace{.5cm}
\noindent {\bf Claim 2:} $\{v_m\}$ is a bounded sequence in $H^1(B_1)$.

First, by $(iii)$ we know that $\|v_m\|_{L^2(B_1)} \le C$. For the gradient, since $2s+\alpha+\gamma'> \lfloor 2s+\alpha+\gamma\rfloor\ge1$ (since $2s+\alpha>1$ by hypothesis), by $(iv)$, choosing $\sigma=1$ and $R=1$, we get
	$$\sup_{\parbox{1.5cm}{\tiny $x,y \in B_1 \\ x \neq y$}} \dfrac{|v_m(x)-v_m(y)|}{|x-y|} = [v_m]_{C^1(B_1)} \le C,$$
with $C>0$ independent of $m$. Thus,
	$$|\nabla v_m (x)| = \lim_{h\to0} \dfrac{|v_m(x+h)-v_m(x)|}{|h|} \le C,$$
which implies that $\|\nabla v_m\|_{L^\infty(B_1)}\le C$, and then  $\|\nabla v_m\|_{L^2(B_1)} \le C$, and the boundedness in $H^1(B_1)$ follows.

\vspace{.5cm}
\noindent {\bf Claim 3:} $\displaystyle \Big|\E_{k_m} \Big(\big[v_m(\cdot+h)-v_m(\cdot)\big](x)\Big)\Big| \le \dfrac{1}{\theta(\rho_m)\rho_m^\alpha},$ whenever $|x| \le \dfrac{1}{2\rho_m}.$

Indeed, first note that, as usual, we define increment
	$$\delta(\varphi,x,y) \coloneqq \varphi(x+y) + \varphi(x-y) - 2\varphi(x),$$
for any map $\varphi$. Since $\tau \le 2$ (by hypothesis), it follows that
	\begin{equation}\label{delta33}
		\delta(P,x+h,y) - \delta(P,x,y) = 0,
	\end{equation}
for any $P \in \mathbb{P}_\tau$ and any $x,y,z \in \mathbb{R}^n$. Now, recalling that $w_k$ satisfies $\E_k w_k = f_k$ in $B_1$ and $[f_k]_{C^\gamma(B_1)}\le1$, we find
	$$\Big|\E_k w_k(\overline{x}+\overline{h}) - \E_k w_k(\overline{x})\Big|\le |\overline{h}|^\gamma,$$
for any $\overline{x}\in B_{\frac{1}{2}}(z)$ and $\overline{h} \in B_{\frac{1}{2}}$. By setting $\overline{h} = \rho_m h$ and $\overline{x} = z_m + \rho_m x$, we obtain
	\begin{equation}\label{*1}
		\Big|\E_{k_m} w_{k_m}\big(z_m+\rho_m(x+h)\big) - \E_{k_m} w_{k_m}\big(z_m+\rho_m x\big)\Big| \le (\rho_m)^\gamma |h|^\gamma.
	\end{equation}
On the other hand, using the definition of $v_m$, we can write
	$$w_{k_m} \Big(z_m + \rho_m(x+h)\Big) = (\rho_m)^{2s+\alpha+\gamma} \theta(\rho_m)v_m(x+h) + P_m(\rho_m(x+h)),$$
and similarly,
	$$w_{k_m}\big(z_m+\rho_m x\big) = (\rho_m)^{2s+\alpha+\gamma}\theta(\rho_m)v_m(x) + P_m(\rho_m x).$$
Hence, after a change of variables, we find
	{\footnotesize\begin{eqnarray}\label{*2}
		L_{k_m} w_{k_m}\Big(z_m+\rho_m(x+h)\Big) - L_{k_m}w_{k_m}\big(z_m+\rho_m x\big)&=& L_{k_m} \Big((\rho_m)^{2s+\alpha+\gamma} \theta(\rho_m)\big(v_m(x+h)-v_m(x)\big)\Big) \nonumber \\
		&& + L_{k_m}\Big(P_m\big(\rho_m(x+h)\big) - P_m\big(\rho_m x\big)\Big) \nonumber \\
		&\stackrel{\eqref{delta33}}{=}& \dfrac{1}{(\rho_m)^{2s}} L_{k_m} \Big((\rho_m)^{2s+\alpha+\gamma} \theta(\rho_m)\big(v_m(\cdot+h)-v_m\big)(x)\Big). \nonumber \\
	\end{eqnarray}}
Similarly, we have
	\begin{eqnarray}\label{*3}
		\mbox{div}\Big(a_{k_m}(x) \nabla w_{k_m}(z_m\rho_m(x+h))\Big) &-& \mbox{div} \Big(a_{k_m}(x)\nabla w_{k_m}(z_m+\rho_m x)\Big) \nonumber \\
		&=& \mbox{div} \Big((\rho_m)^{2s+\alpha+\gamma}\theta(\rho_m)a_{k_m}(x)\big(\nabla v_m(x+h) - \nabla v_m(x)\big)\Big) \nonumber \\
		&=& \dfrac{1}{\rho_m^2} \mbox{div} \Big((\rho_m)^{2s+\alpha+\gamma}\theta(\rho_m)a_{k_m}(x)\big(\nabla v_m (\cdot+h)-\nabla v_m\big)(x)\Big). \nonumber \\
	\end{eqnarray}
By combining equations \eqref{*1}, \eqref{*2}, and \eqref{*3}, we conclude
	$$\dfrac{1}{(\rho_m)^{2s}} \Big|\E_{k_m} \Big((\rho_m)^{2s+\alpha+\gamma}\theta(\rho_m)\big(v_m(\cdot+h) - v_m\big)(x)\Big)\Big|\le (\rho_m)^\gamma |h|^\gamma,$$
whenever $|x|\le \frac{1}{2\rho_m}$. Here, we use that $\rho_m<1$ for $m$ large enough. And this complete the proof of Claim 3.

In summary, we began by noting that the sequence $\{a_k\}$ is uniformly bounded. From Claim 2, the sequence $\{v_m(\cdot+h) - v_m\}$ is bounded in $H^1(B_1)$, while Claim 1 ensures that it converges uniformly on compact sets to $v(\cdot+h) - v$. Moreover, the uniform $C^\gamma$ bound on $\{f_{k_m}\}$ ensures that these functions converge uniformly in $B_1$ to some limit $f$. Finally, using the compactness of probability measures on the sphere, we extract a subsequence of spectral measures $\{\mu_{k_m}\}$ converging to a spectral measure $\mu$ associated with an operator $L$ of the form \eqref{defL}–\eqref{condL}. These properties, along with \eqref{vi}, enable the application of Lemma \ref{lemcontr} to the sequence $\{v_m(\cdot + h) - v_m\}$.

Consequently, taking the limit as $m \to \infty$ in Claim 3, by applying Lemma \ref{lemcontr} and since $\theta(\rho)\rho^\alpha \to\infty$, as $\rho \to 0^+$, we deduce
	$$\E \Big(\big(v(\cdot+h)-v\big)(x)\Big) = 0, \quad \mbox{in whole} \ \mathbb{R}^n,$$
where $\E(\cdot) \coloneqq L(\cdot) - \mbox{div}(a(x)\nabla (\cdot))$. Since Claim 1 guarantees that $v$ satisfies the assumptions of Corollary \ref{coroLiouville}, it follows that $v$ must be a polynomial of degree at most $\tau$. However, passing to the limit in \eqref{ort}, we find that $v$ is orthogonal to every such polynomial in $B_1$, and hence $v \equiv 0$. This is incompatible with the normalization condition in item $(i)$ in the limit, yielding a contradiction.

\vspace{.5cm}

\noindent $(b)$ Observe that in the preceding proof, for our case involving a double-phase structure, the primary effect of the interaction between $s$ and the H\"{o}lder exponents appears in the boundedness of the sequence ${v_m}$ in the space $H^1(B_1)$, as shown in Claim 2, and in the decay estimates for the operators $\E_{k_m}$, as discussed in Claim 3. Specifically, in item $(a)$, since $2s + \alpha > 1$, we may proceed as in Claim 2 without further assumptions. However, to apply Claim 3, it is necessary to assume that $\lfloor 2s + \alpha + \gamma \rfloor \le 2$. On the other hand, in item $(b)$, since $2s + \alpha \le 1$, we automatically have $\lfloor 2s + \alpha + \gamma \rfloor \le 2$, and thus Claim 3 holds {\it ipsis litteris}. In this case, however, we must impose $2s + \gamma + \alpha > 1$ in order to ensure Claim 2 applies. Therefore, taking these considerations into account, the previous arguments can be repeated, and item $(b)$ follows.
\end{proof}


Now we are able to prove the first regularity result as follows.

\begin{proof}[Proof of Theorem \ref{reg1}]
	We will prove only under condition $(a)$, since the item $(b)$ follows by same reasoning. As before, consider $\tau = \lfloor 2s+\alpha+\gamma\rfloor$, and $\gamma'\in(0,1)$ be such that $\tau <2s+\alpha+\gamma'$, and $\gamma <2s+\gamma'$. Now, let $w \in C_0^\infty(\mathbb{R}^n)$ be such that $\E w = f$ in $B_1$. Take a cutoff function $\eta \in C_0^\infty(B_2)$ such that $\eta \equiv 1$ in $B_{\frac{3}{2}}$. Note that
	$$\E (w\eta) = \E(w\eta -w) + f \quad \mbox{in} \ B_1.$$
Then, since $w\eta-w$ vanishes in $B_{\frac{3}{2}}$, by Proposition \ref{prestep}
	\begin{eqnarray}\label{38}
		[w\eta]_{C^{2s+\alpha+\gamma}(B_{\frac{1}{2}})} &\le& C \Big([f+\E(w\eta-w)]_{C^\gamma(B_1)} + \|w\eta\|_{C^{2s+\alpha+\gamma'}(B_2)}\Big) \nonumber \\
		&\le& C \Big([f]_{C^\gamma(B_1)}+[L(w\eta-w)]_{C^\gamma(B_1)} + \|w\eta\|_{C^{2s+\alpha+\gamma'}(B_2)}\Big).
	\end{eqnarray}
Set $\varphi \coloneqq w\eta -w$. The product rule for H\"{o}lder seminorms yields $[\varphi]_{C^\gamma(\mathbb{R}^n)} \le C \|w\|_{C^\gamma(\mathbb{R}^n)}$. Thus,
	\begin{eqnarray*}
		|L\varphi(x)-L\varphi(y)| &\le& \int_{S^{n-1}} \int_{\frac{1}{2}}^{+\infty} \dfrac{|\varphi(x+\theta r) - \varphi(y+\theta r)|+|\varphi(y-\theta r)- \varphi(x-\theta r)|}{|r|^{1+2s}} \ dr \ d\mu(\theta) \\
		&\le& 2[\varphi]_{C^\gamma(\mathbb{R}^n)} \ |x-y|^\gamma \int_{S^{n-1}} d\mu \int_{\frac{1}{2}}^{+\infty} r ^{-1-2s}dr \\
		&\le& \dfrac{2^{2s+1}}{s} \Lambda [\varphi]_{C^\gamma(\mathbb{R}^n)} \ |x-y|^\gamma,
	\end{eqnarray*}
for any $x,y \in B_1$. Then,
	\begin{equation}\label{39}
		[L\varphi]_{C^\gamma(B_1)} = \sup_{\substack{x,y \in B_1 \\ x \neq y}} \dfrac{|L\varphi(x)-L\varphi(y)|}{|x-y|^\gamma} \le C[\varphi]_{C_\gamma(\mathbb{R}^n)} \le C\|w\|_{C^\gamma(\mathbb{R}^n)}.
	\end{equation}
By combining \eqref{38} and \eqref{39}, we get
	$$[w]_{C^{2s+\alpha+\gamma}(B_\frac{1}{2})} \le C\Big([f]_{C^\gamma(B_1)} + \|w\|_{C^\gamma(\mathbb{R}^n)} + \|w\|_{C^{2s+\alpha+\gamma'}(B_2)}\Big).$$
Therefore, by standard arguments (see for instance \cite{GT} or \cite{S}), we have
	$$\|w\|_{C^{2s+\alpha+\gamma}(B_\frac{1}{2})} \le C \Big(\|f\|_{C^\gamma(B_1)}+\|w\|_{C^\gamma(\mathbb{R}^n)}\Big).$$
Finally, to get the previous result for any $u \in C^\gamma$, consider $\eta_\varepsilon \in C_0^\infty(B_\varepsilon)$ a mollifier, and $u_\varepsilon \coloneqq u * \eta_\varepsilon$. Then,
	$$\E u_\varepsilon = \E u * \eta_\varepsilon = f * \eta_\varepsilon,$$
in $B_1^\varepsilon = \{x \in B_1 \; \; : \;\; \mbox{dist}(x,S^{n-1})>\varepsilon\}$. Therefore, by approximation arguments
	 $$\|u\|_{C^{2s+\alpha+\gamma}(B_\frac{1}{2})} \le C \Big(\|f\|_{C^\gamma(B_1)}+\|u\|_{C^\gamma(\mathbb{R}^n)}\Big),$$
as desired.
\end{proof}

\section{Interior regularity (part two)}\label{Sec4}

In this section we will prove the regularity estimate with a $L^\infty$ right-hand side. Similar as before, we prove first a preliminary result.

\begin{proposition}\label{presteps>}
	Let $\E$ be an operator as defined in $(P)$, where $a(\cdot) \in C_{loc}^{0,\alpha}(\mathbb{R}^n)$, for some $\alpha\in(0,1)$, satisfying \eqref{conda}, and $L$ satisfies \eqref{defL} and \eqref{condL}, with $\frac{1-\alpha}{2} < s <1$ and $2s+\alpha \notin \mathbb{N}$. Let $\gamma \in (0,2s+\alpha)$ be such that
		$$\lfloor 2s+\alpha \rfloor < \gamma < 2s+\alpha.$$
	Assume $w \in C_0^{\infty}(\mathbb{R}^n)$ satisfying
		$$\E w = f, \quad \mbox{in} \ B_1,$$
	with $f \in L^\infty(B_1)$. Then,
		$$[w]_{C^{2s+\alpha}(B_{\frac{1}{2}})} \le C \Big(\|f\|_{L^\infty(B_1)} + \|w\|_{C^\gamma(\mathbb{R}^n)}\Big).$$
\end{proposition}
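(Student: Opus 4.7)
The plan is to adapt the blow-up contradiction scheme of Proposition \ref{prestep} to the $L^\infty$ setting. I argue by contradiction: if the estimate fails, there exist sequences $\{w_k\}\subset C_0^\infty(\mathbb{R}^n)$, $\{f_k\}\subset L^\infty(B_1)$, $\{a_k\}\subset C^{0,\alpha}_{\mathrm{loc}}(\mathbb{R}^n)$, and operators $\{L_k\}$ satisfying \eqref{defL} and \eqref{condL}, such that $\E_k w_k = f_k$ in $B_1$, $\|f_k\|_{L^\infty(B_1)}+\|w_k\|_{C^\gamma(\mathbb{R}^n)}\le 1$, yet $[w_k]_{C^{2s+\alpha}(B_{1/2})}\to\infty$. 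The preliminary step of showing that $\{a_k\}$ remains uniformly bounded on compact sets is identical to the corresponding argument in Proposition \ref{prestep}.

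The heart of the argument is a Campanato-type blow-up. Let $\tau := \lfloor 2s+\alpha\rfloor$; under the standing assumption $s>(1-\alpha)/2$ and $2s+\alpha\notin\mathbb{N}$ one has $\tau\in\{1,2\}$, and the hypothesis furnishes $\gamma>\tau$. For each $(k,z,\rho)$ let $P_{k,z,\rho}\in\mathbb{P}_\tau$ denote the best $L^2(B_\rho(z))$-approximant of $w_k$, and define the nonincreasing quantity
$$\theta(\rho) := \sup_k \sup_{z\in\overline{B_{1/2}}}\sup_{\rho'\ge\rho}(\rho')^{-(2s+\alpha)}\,\|w_k - P_{k,z,\rho'}\|_{L^\infty(B_{\rho'}(z))}.$$
The bound $\|w_k-P_{k,z,\rho'}\|_{L^\infty(B_{\rho'}(z))}\le C[w_k]_{C^\gamma}(\rho')^\gamma$, valid since $\deg P_{k,z,\rho'}\le\tau<\gamma$, gives $\theta(\rho)\le C\rho^{\gamma-(2s+\alpha)}<\infty$; the contradiction hypothesis forces $\theta(\rho)\to\infty$ as $\rho\to 0^+$ through the Campanato characterization of $C^{2s+\alpha}$; and an argument parallel to the one in Proposition \ref{prestep} yields the refined behavior $\theta(\rho)\rho^\alpha\to\infty$. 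Picking $k_m$, $z_m\in\overline{B_{1/2}}$ and $\rho_m\ge 1/m$ at which the supremum is almost attained and setting $P_m:=P_{k_m,z_m,\rho_m}$, I introduce the blow-up
$$v_m(x):=\frac{w_{k_m}(z_m+\rho_m x)-P_m(\rho_m x)}{\rho_m^{2s+\alpha}\,\theta(\rho_m)}.$$
Standard dyadic polynomial comparison then produces $\|v_m\|_{L^\infty(B_1)}\ge c_0>0$, the $L^2$-orthogonality $\int_{B_1}v_m Q\,dx=0$ for every $Q\in\mathbb{P}_\tau$, the global growth $\|v_m\|_{L^\infty(B_R)}\le CR^{2s+\alpha}$ for $R\ge 1$, and enough local equicontinuity to extract a locally uniform limit $v$ on $\mathbb{R}^n$.

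The decisive estimate is the $L^\infty$ analogue of Claim 3 in Proposition \ref{prestep}: using the scaling $(Lw)(z_m+\rho_m\,\cdot\,)=\rho_m^{-2s}L(w(z_m+\rho_m\,\cdot\,))$, the corresponding identity for $\mathrm{div}(a\nabla\cdot)$, the polynomial cancellation $\delta(P,x+h,y)-\delta(P,x,y)=0$ valid for $P\in\mathbb{P}_\tau$ because $\tau\le 2$, and the crude bound $|\E_{k_m}w_{k_m}(y_1)-\E_{k_m}w_{k_m}(y_2)|\le 2\|f_{k_m}\|_{L^\infty(B_1)}\le 2$ (which replaces the Hölder bound $|\overline h|^\gamma$ used in Proposition \ref{prestep}), one arrives at
$$\big|\E_{k_m}\big(v_m(\cdot+h)-v_m\big)(x)\big|\le\frac{C}{\rho_m^\alpha\,\theta(\rho_m)},\qquad |x|\le\tfrac{1}{2\rho_m},$$
whose right-hand side vanishes as $m\to\infty$ thanks to $\theta(\rho_m)\rho_m^\alpha\to\infty$. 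Lemma \ref{lemcontr} then yields $\E(v(\cdot+h)-v)=0$ in $\mathbb{R}^n$ for every $h\in\mathbb{R}^n$. To apply Corollary \ref{coroLiouville}, I need a Hölder growth estimate $[v]_{C^\sigma(B_R)}\le CR^{(2s+\alpha)-\sigma}$ for some $\sigma\in(\alpha,2s)$; this follows by rescaling $v$ at large radii and observing that the divergence part of the rescaled equation carries a factor $R^{2s-2}\to 0$ (since $s<1$), so that $v$ is asymptotically $L$-harmonic and inherits the interior $C^\sigma$ estimates of \cite{RS}. Since $(2s+\alpha)-\sigma<2s$, Corollary \ref{coroLiouville} forces $v$ to be a polynomial of degree at most $\lfloor\sigma+(2s+\alpha-\sigma)\rfloor=\tau$; passing the orthogonality to the limit then gives $v\equiv 0$, contradicting $\|v\|_{L^\infty(B_1)}\ge c_0$. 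I expect the most delicate points to be the verification of $\theta(\rho)\rho^\alpha\to\infty$ and the interior Hölder control on the limit $v$ at large scales, both of which hinge on the standing assumption $s>(1-\alpha)/2$ and on the fact that the nonlocal operator dominates the divergence operator at infinity.
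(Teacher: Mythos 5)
The overall scaffolding of your argument matches the paper's — a compactness/contradiction blow-up reducing to the Liouville theorem (Corollary \ref{coroLiouville}) through Lemma \ref{lemcontr}. However, there is a genuine gap in the way you set up the monotone quantity $\theta$, and it propagates to exactly the points you yourself flag as ``most delicate.''

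You define $\theta(\rho)$ via the $L^\infty$-Campanato distance $\|w_k - P_{k,z,\rho'}\|_{L^\infty(B_{\rho'}(z))}$, whereas the paper defines $\theta(\rho)$ via the H\"older seminorm $[w_k]_{C^{2s+\alpha}(B_{\rho'}(z))}$ weighted by $(\rho')^{-(2s+\alpha-\gamma)}$. This is not a cosmetic change. The paper's H\"older-based $\theta$ hands over the estimate $[v_m]_{C^\sigma(B_R)}\le CR^{2s+\alpha-\sigma}$ for all $\sigma\in[0,\gamma]$ and $R\ge1$; from it one reads off, uniformly in $m$: (i) local equicontinuity hence compactness by Arzel\`a--Ascoli; (ii) the $H^1(B_1)$ bound on $\{v_m\}$ by taking $\sigma=1$ (possible since $\gamma>\lfloor 2s+\alpha\rfloor\ge1$); and (iii) the crucial growth hypothesis of Lemma \ref{lemcontr}, namely $|v_m(x+h)-v_m(x)|\le C|h|^\sigma R^{2s+\alpha-\sigma}\le CR^{2s-\varepsilon}$ for $x\in B_R$, obtained by picking $\sigma>\alpha$. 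Your $L^\infty$-Campanato normalization gives only $\|v_m\|_{L^\infty(B_R)}\le CR^{2s+\alpha}$, and since $2s+\alpha>2s$, the increments $v_m(\cdot+h)-v_m$ are \emph{not} seen to obey $|u_k(x)|\le C(1+|x|^{2s-\varepsilon})$. You never establish the $H^1$ bound at all, and Lemma \ref{lemcontr} cannot be invoked without both. Note also that the blow-up inherits Campanato control only at scales $\ge 1$ (the rescaled image of the scales $\rho'\ge\rho_m$ in the definition of $\theta$), so the classical Campanato $\Rightarrow$ H\"older passage is not available inside $B_1$ either.

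Your proposed fix — that $v$ is ``asymptotically $L$-harmonic'' at large radii and so inherits the interior $C^\sigma$ estimates of \cite{RS} — does not close this gap. First, it is circular: you would need the growth bound \emph{before} applying Lemma \ref{lemcontr} to even conclude that $v$ solves the homogeneous equation. Second, it only addresses the limit $v$, not the sequence $\{v_m\}$, which is what the compactness lemma requires; and the $v_m$ solve the mixed equation with a nondegenerate divergence part, not a purely nonlocal one. Third, the object that satisfies the homogeneous equation is the increment $v(\cdot+h)-v$, not $v$ itself. In short, to make your $L^\infty$-Campanato route rigorous you would need a separate, quantitative Schauder-type estimate for the mixed operator at all scales, uniform along the contradiction sequence — which is precisely the statement being proved. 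The paper avoids this by encoding H\"older seminorms directly into $\theta$ so that the needed uniform H\"older control is a tautological consequence of the definition.

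Everything else — the uniform boundedness of $\{a_k\}$, the orthogonality $\int_{B_1}v_mQ=0$ for $Q\in\mathbb{P}_\tau$, the scaling of the operator in Claim 3 with $\|f_k\|_{L^\infty}\le1$ replacing the $C^\gamma$ bound, $\theta(\rho)\rho^\alpha\to\infty$, and the final Liouville argument — is aligned with the paper. Replacing your $L^\infty$-Campanato $\theta$ by the H\"older-seminorm $\theta$ and carrying the corresponding estimates $(i)$--$(iv)$ over from Proposition \ref{prestep} would make the argument go through.
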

\begin{proof}
	We follow the same reasoning as in Proposition \ref{prestep}, highlighting only the key differences. Suppose the claimed estimate fails. Then there exist sequences $a_k\in C^{0,\alpha}$, $w_k\in C^\infty_0(\mathbb{R}^n)$, $f_k\in L^\infty(B_1)$, and $L_k$ of type \eqref{defL} satisfying \eqref{condL} (sharing the same ellipticity constants $\lambda_1,\Lambda_1$), such that
		$$\E_k w_k=f_k\quad\text{in }B_1,\quad
		\|f_k\|_{L^\infty(B_1)}+\|w_k\|_{C^\gamma(\mathbb{R}^n)}\le1,
		\quad
		[w_k]_{C^{2s+\alpha}(B_{1/2})}\to\infty.$$
	Similarly as in Proposition \ref{prestep} we know that $\{a_k\}$ is uniformly bounded. Define
		$$\tau\coloneqq\lfloor2s+\alpha\rfloor,\quad
		\beta\coloneqq2s+\alpha-\gamma>0,$$
	and for $\rho>0$ let the quantity
		$$\theta(\rho)  =\sup_k\sup_{z\in B_{1/2}}\sup_{\rho'>\rho}(\rho')^{-\beta}\,[w_k]_{C^{2s+\alpha}(B_{\rho'}(z))}.$$
	Then $\theta$ is nonincreasing, finite for each $\rho>0$, and
	$\theta(\rho)\,\rho^\alpha\to\infty$ as $\rho\to0^+$. For each $m\in\mathbb{N}$, choose
		$$0<\rho'_m\le\tfrac1m,\quad z_m\in B_{1/2},\quad k_m\in\mathbb{N},$$
	such that
		\begin{equation}\label{rmt}
			(\rho'_m)^{-\beta}\,[w_{k_m}]_{C^{2s+\alpha}(B_{\rho'_m}(z_m))}
			\ge \dfrac{1}{2}\theta(\rho'_m).
		\end{equation}
	Define $P_m = P_{k_m,z_m,\rho_m'}$ as in the Proposition \ref{prestep}, and consider the blow-up sequence
		$$v_m(x) \coloneqq \dfrac{w_{k_m}(z_m+\rho_m' x) - P_m(\rho_m' x)}{(\rho_m')^{2s+\alpha}\theta(\rho_m')}.$$
	Note that, for any $m\ge1$
		\begin{equation}\label{intvQ}
			\int_{B_1} v_m(x)Q(x)\ dx=0, \quad \mbox{for any} \ Q \in \mathbb{P}_\tau.
		\end{equation}
	By \eqref{rmt}, the same reasoning as in Proposition \ref{prestep} yields
		\begin{equation}\label{belowinf}
			[v_m]_{C^\gamma(B_1)} \ge \dfrac{1}{2}, \quad \mbox{for any} \ m\ge1,
		\end{equation}
		\begin{equation}\label{uniformB}
			[v_m]_{C^\sigma(B_R)} \le C R^{2s+\alpha-\sigma}, \quad \mbox{for any} \ R \ge1 \ \mbox{and} \ \sigma\in[0,\gamma],
		\end{equation}
	as well as,	
		\begin{equation}
			|v_m(x)| \le C R^{2s-\varepsilon} \quad  \mbox{for any}\ R\ge1 \ \mbox{and some}\ \varepsilon>0.
		\end{equation}
	
	By Arzelà–Ascoli and diagonalization, a subsequence converges $v_m\to v$ in $C^{\gamma'}_{\mathrm{loc}}(\mathbb{R}^n)$ for all $\gamma'<\gamma$. Moreover, passing to the limit of $m\to\infty$ in \eqref{uniformB} with $\sigma=\gamma$, we find
		$$[v]_{C^\gamma(B_R)}\le CR^{2s+\alpha-\gamma}, \quad R\ge1.$$
	Since $1<2s+\alpha$ and $\lfloor2s+\alpha \rfloor<\gamma$, then $2s+\alpha-\gamma <2s$, and this implies that $v$ satisfies the assumptions of Corollary \ref{coroLiouville}.
	
	Further, in a same way as in Claim 2 of Proposition \ref{prestep}, but now using \eqref{belowinf}, \eqref{uniformB}, and the fact that $\gamma >1$, we can see that $\{v_m\}$ is a bounded sequence in $H^1(B_1)$.
	
	Finally, following the same reasoning of Claim 3 in Proposition \ref{prestep}, but now using that $\|f_k\|_{L^\infty(B_1)} \le 1$, we can prove that
		\begin{equation}\label{Emlimit}
			\Big|\E_{k_m} \Big(\big[v_m(\cdot+h)-v_m\big](x)\Big)\Big| \le \dfrac{2}{\theta(\rho_m'){(\rho_m')^\alpha}},
		\end{equation}
	whenever $|x| \le \dfrac{1}{2\rho_m}.$
	
	In summary, we have that
		\begin{itemize}
			\item the sequence $\{a_k\}$ is uniformly bounded;
			
			\item the sequence $\{v_m(\cdot+h) - v_m\}$ is bounded in $H^1(B_1)$, and converges uniformly on compact sets to $v(\cdot+h) - v$, with $v$ satisfying the assumptions of Corollary \ref{coroLiouville};
			
			\item $|v_m(x+h)-v_m(x)| \le C R^{2s-\varepsilon}$, for any $R\ge1$ and $h \in \mathbb{R}^n$.
		\end{itemize}
	Thus, passing to the limit of $m \to \infty$ in \eqref{Emlimit}, by applying Lemma \ref{lemcontr} we get
		$$\E \Big(\big(v(\cdot+h)-v\big)(x)\Big) = 0, \quad \mbox{in whole} \ \mathbb{R}^n,$$
	where $\E$ is in the form $L - \mbox{div}(a(x)\nabla \cdot)$. Since $v$ satisfies the assumptions of Corollary \ref{coroLiouville}, it follows that $v$ must be a polynomial of degree at most $\tau$. However, passing to the limit in \eqref{intvQ}, we find that $v$ is orthogonal to every such polynomial in $B_1$, and hence $v \equiv 0$. This is incompatible with the normalization condition in \eqref{belowinf} in the limit, yielding a contradiction.
\end{proof}

Note that in the previous proposition, the choice of $\gamma$  is feasible precisely due to the strict inequality $\frac{1-\alpha}{2} < s <1$ holds. For the borderline case, which means $s = \frac{1-\alpha}{2}$, we establish the following regularity estimate.

\begin{proposition}\label{prestepborder}
	Let $\E$ be an operator as defined in $(P)$, where $a(\cdot) \in C_{loc}^{0,\alpha}(\mathbb{R}^n)$, for some $\alpha\in(0,1)$, satisfying \eqref{conda}, and $L$ satisfies \eqref{defL} and \eqref{condL}, with $s=\frac{1-\alpha}{2}$. Suppose $w \in C_0^{\infty}(\mathbb{R}^n)$ solves
		$$\E w = f, \quad \mbox{in} \ B_1,$$
	with $f \in L^\infty(B_1)$. Then, for any $\varepsilon \in (0,1)$
		$$[w]_{C^{2s+\alpha-\varepsilon}(B_{\frac{1}{2}})} \le C \Big(\|f\|_{L^\infty(B_1)} + \|w\|_{L^\infty(\mathbb{R}^n)}\Big).$$
\end{proposition}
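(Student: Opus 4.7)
The plan is to mimic the contradiction--blow-up--Liouville scheme of Proposition \ref{presteps>}, with modifications specific to the borderline case $s=\frac{1-\alpha}{2}$, where $2s+\alpha=1$ is an integer. The $\varepsilon$-loss in the statement forces one to work with the target exponent $2s+\alpha-\varepsilon\in(0,1)$, so that $\tau:=\lfloor 2s+\alpha-\varepsilon\rfloor=0$ and the polynomials subtracted in the blow-up reduce to constants.

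I would start by assuming, for contradiction, the existence of sequences $\{a_k\}\subset C^{0,\alpha}_{\mathrm{loc}}(\mathbb{R}^n)$, $\{w_k\}\subset C_0^\infty(\mathbb{R}^n)$, $\{f_k\}\subset L^\infty(B_1)$, and operators $\{L_k\}$ of the form \eqref{defL}--\eqref{condL} with fixed ellipticity constants such that $\mathcal{E}_k w_k=f_k$ in $B_1$, with
\begin{equation*}
\|f_k\|_{L^\infty(B_1)}+\|w_k\|_{L^\infty(\mathbb{R}^n)}\le 1, \qquad [w_k]_{C^{2s+\alpha-\varepsilon}(B_{1/2})}\to\infty.
\end{equation*}
Uniform boundedness of $\{a_k\}$ follows as in Proposition \ref{prestep}. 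I would then introduce the Campanato-type worst-scale function
\begin{equation*}
\theta(\rho):=\sup_k\,\sup_{z\in B_{1/2}}\,\sup_{\rho'>\rho}\,(\rho')^{-(2s+\alpha-\varepsilon)}\inf_{c\in\mathbb{R}}\|w_k-c\|_{L^\infty(B_{\rho'}(z))},
\end{equation*}
which is finite for each $\rho>0$ thanks to $\|w_k\|_{L^\infty}\le 1$, and which must satisfy $\theta(\rho)\to\infty$ and $\theta(\rho)\rho^\alpha\to\infty$ as $\rho\to 0^+$; otherwise the Campanato characterization of H\"older spaces would produce a uniform $C^{2s+\alpha-\varepsilon}$ bound on $\{w_k\}$ over $B_{1/2}$, contradicting the blow-up assumption. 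For each $m\in\mathbb{N}$, I would pick $\rho_m'\in(0,1/m]$, $z_m\in B_{1/2}$, $k_m$, and a best-approximating constant $c_m$ on $B_{\rho_m'}(z_m)$ realizing the supremum up to a factor $1/2$, and set
\begin{equation*}
v_m(x):=\frac{w_{k_m}(z_m+\rho_m'x)-c_m}{(\rho_m')^{2s+\alpha-\varepsilon}\,\theta(\rho_m')}.
\end{equation*}
By construction $\|v_m\|_{L^\infty(B_1)}\ge 1/2$, and a standard dyadic telescoping combined with the monotonicity of $\theta$ yields the growth bound $\|v_m\|_{L^\infty(B_R)}\le CR^{2s+\alpha-\varepsilon}$ for every $R\ge 1$, with $C$ independent of $m$.

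From here the argument follows the structure of Claims 1--3 in Proposition \ref{presteps>}. Arzel\`a--Ascoli together with a diagonal procedure extracts a subsequence $v_m\to v$ locally uniformly on $\mathbb{R}^n$, with $v$ satisfying the growth hypothesis of Corollary \ref{coroLiouville}. The analog of Claim 3 reads
\begin{equation*}
\big|\mathcal{E}_{k_m}\big((v_m(\cdot+h)-v_m)\big)(x)\big|\le \frac{C}{\theta(\rho_m')(\rho_m')^\alpha}\longrightarrow 0 \quad \text{for } |x|\le \tfrac{1}{2\rho_m'},
\end{equation*}
and it is precisely the condition $\theta(\rho)\rho^\alpha\to\infty$ that absorbs the mismatch between the $2s$- and $2$-scalings of the nonlocal and local parts of $\mathcal{E}_{k_m}$. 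Applying Lemma \ref{lemcontr} and Corollary \ref{coroLiouville} then forces $v$ to be a polynomial of degree at most $0$, i.e.\ a constant, and the centering built into each $v_m$ yields $v\equiv 0$, contradicting $\|v_m\|_{L^\infty(B_1)}\ge 1/2$.

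The main technical hurdle, compared with Proposition \ref{presteps>}, is the uniform $H^1(B_1)$ bound on $\{v_m\}$ required by Lemma \ref{lemcontr}. In that proposition one obtained it for free from the Lipschitz control implied by the auxiliary exponent $\gamma>1$, a route no longer available here since the target exponent lies below $1$. I plan to recover the $H^1$ bound by a Caccioppoli-type energy estimate applied to the scaled equation satisfied by $v_m$: testing against $v_m\varphi^2$ for a cutoff $\varphi\in C_0^\infty(B_1)$ and using the ellipticity $a_{k_m}\ge a^->0$ together with the nonnegativity of the Dirichlet form associated to $L_{k_m}$, the local gradient energy $\int a_{k_m}|\nabla v_m|^2\varphi^2\,dx$ is controlled by $\|v_m\|_{L^\infty}$-type quantities and the source term, both bounded uniformly in $m$ by our choice of $\rho_m'$. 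This delicate balance between local and nonlocal energies in the borderline regime is the crux of the argument, and mirrors the treatment of $s=\frac{1}{2}$ in the purely nonlocal setting of \cite{RS}.
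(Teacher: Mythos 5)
Your overall scheme — contradiction, blow-up, Liouville — is the same as the paper's, but you deviate at two linked points and the deviation opens a genuine gap. The paper's proof (adapting Proposition~\ref{presteps>}) keeps the worst-scale function $\theta$ built from the \emph{H\"older seminorm} $[w_k]_{C^{2s+\alpha}(B_{\rho'}(z))}$ and asserts the rescaled bound $[v_m]_{C^\sigma(B_R)}\le CR^{2s+\alpha-\varepsilon}$ for all $\sigma\in[0,2s+\alpha]$. Since $2s+\alpha=1$ at the borderline, taking $\sigma=1$, $R=1$ gives a uniform Lipschitz bound $[v_m]_{C^1(B_1)}\le C$, from which both the $H^1(B_1)$ bound and the equicontinuity needed for Arzel\`a--Ascoli follow immediately. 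You replace the H\"older-seminorm $\theta$ by a Campanato-type oscillation $\inf_c\|w_k-c\|_{L^\infty}$, and as you correctly observe this costs you the Lipschitz control. The problem is that the resulting gap is wider than you acknowledge.

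Concretely, the oscillation-based $\theta$ you define is $\sup_{\rho'>\rho}(\rho')^{-(2s+\alpha-\varepsilon)}\inf_c\|w_k-c\|_{L^\infty(B_{\rho'}(z))}$; after blow-up at scale $\rho_m'$ this controls the oscillation of $v_m$ only on balls $B_{R}$ with $R\ge1$, not on sub-unit scales inside $B_1$. You therefore have a growth bound $\|v_m\|_{L^\infty(B_R)}\le CR^{2s+\alpha-\varepsilon}$ and a nondegeneracy $\|v_m\|_{L^\infty(B_1)}\ge 1/2$, but \emph{no modulus of continuity} for $v_m$ on $B_1$. This means: (i) your appeal to Arzel\`a--Ascoli to extract a locally uniformly convergent subsequence is not justified — pure $L^\infty$ bounds, or even $H^1$ bounds in $n\ge2$, do not give equicontinuity; and (ii) passing to the limit in the lower bound $\|v_m\|_{L^\infty(B_1)}\ge 1/2$ to conclude $v\not\equiv 0$ requires exactly that uniform convergence you have not secured. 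Your proposed Caccioppoli estimate, even if carried out (and it would need care: the rescaled nonlocal form appears with the factor $\rho_m^{2-2s}$ and has cutoff cross-terms and a tail that must be controlled by the growth bound), only yields an $H^1(B_1)$ bound, which is necessary for Lemma~\ref{lemcontr} but is not a substitute for equicontinuity. To close the argument you would either need to work with the H\"older-seminorm $\theta$ as the paper does, thereby recovering a uniform $C^\sigma(B_1)$ bound with $\sigma$ close to $1$ directly from the normalization, or else supplement the Caccioppoli estimate with a De Giorgi--Nash--Moser type $C^{0,\delta}$ estimate for the rescaled equation to supply the missing compactness. As written, the proposal does not establish the uniform convergence that drives the contradiction.
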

\begin{proof}
	We adapt the argument from Proposition \ref{presteps>}, with two key adjustments, namely: Set $\tau =0$, and $\beta = 2s+\alpha-\varepsilon$. For the blow-up sequence $\{v_m\}$ we will be able to prove that
		$$[v_m]_{C^\sigma(B_R)} \le C R^{2s+\alpha-\varepsilon}, \quad \mbox{for any} \ R\ge 1 \ \mbox{and} \ \sigma \in [0,2s+\alpha].$$
	This ensures $\{v_m\}$ remains bounded in $H^1(B_1)$, analogous to the earlier reasoning. The remaining steps follow verbatim from Proposition \ref{presteps>}, as the structure of the contradiction argument and limiting process remain unchanged.
\end{proof}

Now we are able to prove the second regularity result as follows.

\begin{proof}[Proof of Theorem \ref{reg2}]
	We begin by considering the case $s \neq \frac{1 - \alpha}{2}$. Choose any \linebreak $\gamma \in \big(\lfloor 2s + \alpha \rfloor, 2s + \alpha\big)$, and let $w \in C_0^\infty(\mathbb{R}^n)$ be a smooth function satisfying
		$$\E w = f, \quad \mbox{in} \ B_1.$$
	Let $\eta \in C_0^\infty(B_2)$ be a cutoff function such that $\eta \equiv 1$ in $B_{\frac{3}{2}}$. Then, using the linearity of the operator, we obtain
		$$\E (w\eta) = \E \big(w\eta -w\big) + f, \quad \mbox{in} \ B_1.$$
	Thus, since $w\eta - w$ vanishes in $B_{\frac{3}{2}}$, Proposition \ref{presteps>} yields
		\begin{eqnarray}\label{weta}
			[w\eta]_{C^{2s+\alpha}(B_{\frac{1}{2}})} &\le& C \Big(\|\E(w\eta -w)+f\|_{L^\infty(B_1)} + \|w\eta\|_{C^\gamma(B_2)}\Big) \nonumber \\
			&\le& C \Big(\|L(w\eta -w)\|_{L^\infty(B_1)} + \|f\|_{L^\infty(B_1)} + \|w\eta\|_{C^\gamma(B_2)}\Big).
		\end{eqnarray}
	To control the nonlocal term $|L(w\eta - w)|{L^\infty(B_1)}$, define $\phi \coloneqq w\eta - w$, and notice that
		\begin{equation}\label{phi}
			\|\phi\|_{L^\infty(\mathbb{R}^n)} \le \|w\|_{L^\infty(\mathbb{R}^n)}.
		\end{equation}
	Further, for any $x \in B_1$
		\begin{eqnarray}\label{Lphi}
			|L\phi(x)| &\le& \int_{S^{n-1}} \int_{-\infty}^{+\infty} \dfrac{|\phi(x+\theta r) + \phi(x-\theta r) - 2\phi(x)|}{|r|^{1+2s}}\ dr \ d\mu(\theta) \nonumber \\
			&=& \int_{S^{n-1}} \int_{\frac{1}{2}}^{+\infty} \dfrac{|\phi(x+\theta r) + \phi(x-\theta r)|}{|r|^{1+2s}}\ dr \ d\mu(\theta) \nonumber \\
			&\le& C \int_{S^{n-1}} \int_{\frac{1}{2}}^{+\infty} \dfrac{|\phi(x+\theta r)|}{|r|^{1+2s}}\ dr \ d\mu(\theta) \nonumber \\
			&\le& C \|\phi\|_{L^\infty(\mathbb{R}^n)} \int_{S^{n-1}} \int_{\frac{1}{2}}^{+\infty} r^{-(1+2s)} \ dr \ d\mu(\theta) \nonumber \\
			&\le& C \Lambda_1 \|\phi\|_{L^\infty(\mathbb{R}^n)}.
		\end{eqnarray}
	Plugging \eqref{Lphi} combined with \eqref{phi} into \eqref{weta}, we obtain
		\begin{eqnarray}
			[w]_{C^{2s+\alpha}(B_{\frac{1}{2}})} &=& [w\eta]_{C^{2s+\alpha}(B_{\frac{1}{2}})} \nonumber \\
			&\le& C \Big(\|f\|_{L^\infty(B_1)} + \|w\|_{L^\infty(B_1)} + \|w\|_{C^\gamma(B_2)}\Big), \nonumber
		\end{eqnarray}
	where we used $\|w\eta\|_{C^\gamma(B_2)} \le C\|w\|_{C^\gamma(B_2)}$, for $C>0$ depending on $\eta$ fixed. By standard interpolation and the smoothness of the cutoff, it follows that	
		$$\|w\|_{C^{2s+\alpha}(B_{\frac{1}{2}})} \le C \Big(\|w\|_{L^\infty(\mathbb{R}^n)} + \|f\|_{L^\infty(B_1)}\Big).$$
	
	To obtain the result for a general bounded weak solution $u \in L^\infty(\mathbb{R}^n)$, we approximate $u$ by smooth functions as in the proof of Theorem~\ref{reg1}, and pass to the limit using standard stability arguments. This yields
		$$\|u\|_{C^{2s+\alpha}(B_{\frac{1}{2}})} \le C \Big(\|u\|_{L^\infty(\mathbb{R}^n)} + \|f\|_{L^\infty(B_1)}\Big),$$
	as desired.
	
	Finally, in the critical case $s = \frac{1 - \alpha}{2}$, the same line of reasoning applies, except that we invoke Proposition \ref{prestepborder} in place of Proposition \ref{presteps>}. The argument then leads to a slightly weaker estimate due to the borderline nature of the regularity, yielding the desired conclusion with an $\varepsilon$-loss in the exponent. This completes the proof.
\end{proof}

\section{Regularity up to the boundary}\label{Sec5}

In this section we will consider $a(x) \equiv 1$ and prove the regularity up to the boundary of weak solutions to
	$$\left\{
	\begin{array}{rcccl}
		\mathcal{E} u & =& f & \text{in} & \Omega; \\
		u & = & 0 & \text{in} & \mathbb{R}^n \setminus \Omega.
	\end{array}
	\right.$$
To get the H\"{o}lder regularity, the main idea is to prove the following $W^{2,p}$-regularity theorem:

\begin{theorem}\label{W2p-reg}
	Let $\Omega$ be a $C^{1,1}$ domain in $\mathbb{R}^n$ and $s \in (0,1)$. Suppose that $L$ is any operator of the form \eqref{defL} and \eqref{condL}, and $a\equiv 1$. If $f \in L^p(\Omega)$, where $p$ satisfies
		\begin{equation} \label{rangep}
		\left\{
		\begin{array}{rc}
			1<p<\infty, & \mbox{if} \ s\in (0,\frac{1}{2}]; \\
			n<p<\dfrac{n}{2s-1}, & \mbox{if}\ s\in (\frac{1}{2},1),
		\end{array}
		\right.
		\end{equation}
	then, the problem $\mathcal{E} u = f$ in $\Omega$ has a unique solution $u \in W^{2,p}(\Omega)\cap W^{1,p}_0(\Omega)$. Furthermore,
		$$\|u\|_{2,p} \le C(p,s,\Lambda_1,n,\Omega)\Big(\|u\|_p + \|f\|_p\Big).$$
\end{theorem}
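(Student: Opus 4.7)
The plan is to reduce to the Calder\'{o}n--Zygmund $W^{2,p}$-theory for the Dirichlet Laplacian and close the argument by the method of continuity. Since $a\equiv 1$, the equation rewrites as $-\Delta u = f - Lu$, and the classical Agmon--Douglis--Nirenberg estimate on the $C^{1,1}$-domain $\Omega$ gives, for any $v\in W^{2,p}(\Omega)\cap W^{1,p}_0(\Omega)$,
$$\|v\|_{W^{2,p}(\Omega)}\le C\bigl(\|\Delta v\|_{L^p(\Omega)}+\|v\|_{L^p(\Omega)}\bigr),$$
so everything reduces to an $L^p$-bound on $Lu$ that can be absorbed into the left-hand side.

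The crux is therefore to establish a subcritical bound of the form
$$\|Lu\|_{L^p(\Omega)}\le \varepsilon\,\|u\|_{W^{2,p}(\Omega)}+C_\varepsilon\,\|u\|_{L^p(\Omega)}\qquad\forall\,\varepsilon>0.$$
The argument splits according to the range of $s$. For $s\in(0,\tfrac{1}{2}]$, the operator $L$ is of order $2s\le 1$, the zero-extension of $u\in W^{1,p}_0(\Omega)$ lies in $W^{1,p}(\mathbb{R}^n)$, and the two-sided symbol bound \eqref{FSbounds} combined with Mikhlin-multiplier/Riesz-potential arguments on the kernel \eqref{defL} yields $\|Lu\|_{L^p(\mathbb{R}^n)}\lesssim \|u\|_{W^{2s+\eta,p}(\mathbb{R}^n)}$ for a small $\eta>0$; standard interpolation between $W^{2,p}$ and $L^p$ then produces the desired absorption. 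For $s\in(\tfrac{1}{2},1)$ with $p\in(n,n/(2s-1))$, the Sobolev embedding $W^{2,p}(\Omega)\hookrightarrow C^{1,1-n/p}(\overline{\Omega})$ combined with $u|_{\partial\Omega}=0$ gives a globally Lipschitz zero-extension; I would split the kernel integral defining $Lu(x)$ into an interior part (bounded by the $C^{1,\beta}$-seminorm via the cancellation $|\delta(u,x,\theta r)|\lesssim |r|^{1+\beta}$) and a boundary part (bounded by $\|u\|_{L^\infty}$ from the tail of $|r|^{-1-2s}$), obtaining a pointwise bound of the form $|Lu(x)|\lesssim d(x,\partial\Omega)^{1-2s}$ up to lower-order terms; the range $p<n/(2s-1)$ is then precisely the integrability threshold for $d(\cdot,\partial\Omega)^{(1-2s)p}$ on $\Omega$.

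Once this a priori estimate is in hand uniformly in $t\in[0,1]$, the method of continuity applied to the family $\mathcal{E}_t=-\Delta+tL$ transports $W^{2,p}(\Omega)\cap W^{1,p}_0(\Omega)$-solvability from the classical case $t=0$ (Gilbarg--Trudinger, Chap.~9) to the case $t=1$. Uniqueness is immediate from the maximum principle, Theorem~\ref{maxprin}, applied to the difference of two solutions.

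The principal difficulty is the regime $s>1/2$: the zero-extension of a $W^{2,p}_0$-function is only globally Lipschitz, so $Lu$ genuinely develops an integrable singularity along $\partial\Omega$, and extracting the sharp integrability threshold tied to the Sobolev exponent $1-n/p$ requires a careful boundary-localized coordinate computation. A secondary technical point is to keep every constant in the interpolation and absorption steps independent of the continuity parameter $t$, but this is routine once the structure of the estimate is fixed.
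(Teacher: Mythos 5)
Your overall skeleton (reduce to an $\varepsilon$-absorption estimate $\|Lu\|_p\le \varepsilon\|u\|_{2,p}+C_\varepsilon\|u\|_p$, then solve) matches the paper's Step One, but there are two concrete problems with the way you try to produce the absorption estimate, and a structural difference in the existence argument.

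\textbf{On $s\le\tfrac12$: the Mikhlin route fails for general stable operators.} You propose to bound $\|Lu\|_{L^p(\mathbb{R}^n)}\lesssim\|u\|_{W^{2s+\eta,p}(\mathbb{R}^n)}$ via ``Mikhlin-multiplier/Riesz-potential arguments'' using the two-sided symbol bound \eqref{FSbounds}. But \eqref{condL} only requires $\mu$ to be a nonnegative finite measure on $S^{n-1}$; in particular $\mu$ may be a finite sum of Dirac masses (the third example in the introduction). In that case the symbol $A_L(\xi)=c\int_{S^{n-1}}|\xi\cdot\theta|^{2s}\,d\mu(\theta)$ is positively homogeneous but not even $C^1$ away from the origin when $2s<1$, so the Mikhlin--H\"ormander theorem does not apply to $A_L(\xi)\,(1+|\xi|^2)^{-(2s+\eta)/2}$. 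Two-sided bounds on a multiplier are nowhere near sufficient for $L^p$-boundedness. This is precisely why the paper works entirely on the physical side: in Lemma~\ref{L<12Lp} the second-difference kernel is split into $\{|r|>1\}$ and $\{|r|\le1\}$, and estimated by H\"older and Fubini together with the Stein extension operator and a Hardy-type bound on $u/d$ for the boundary directions. No Fourier-side regularity of $A_L$ is ever used.

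\textbf{On $s>\tfrac12$: a pointwise bound $|Lu(x)|\lesssim d(x)^{1-2s}$ gives the wrong threshold.} You claim the range $p<n/(2s-1)$ is ``precisely the integrability threshold for $d(\cdot,\partial\Omega)^{(1-2s)p}$.'' This is not correct: $d(\cdot,\partial\Omega)$ vanishes on a codimension-\emph{one} set, so $\int_\Omega d(x)^{-\sigma}\,dx<\infty$ iff $\sigma<1$, not $\sigma<n$. Thus the integrability of $d^{(1-2s)p}$ only requires $(2s-1)p<1$, i.e.\ $p<\tfrac{1}{2s-1}$, which for $n\ge 2$ is a strictly smaller admissible range than the stated $p<\tfrac{n}{2s-1}$. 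In other words, your strategy would only prove the theorem with the worse exponent bound. The paper's Lemma~\ref{L>12Lp} never proves a pointwise $d^{1-2s}$ bound; instead it discards the $\{|r|\le\varepsilon\}$ contribution on $\Omega_\varepsilon$ entirely (it vanishes there because one cannot exit $\Omega$ with a step of size $\le\varepsilon$ if one is $\varepsilon$-deep inside), then estimates directly in $L^p$ over the remaining boundary layer of width $\varepsilon$ using an auxiliary $\gamma\in(2s-1,n/p)$ so that an $\varepsilon$-power survives to give $o(\varepsilon)$. You would have to do something comparably delicate; the naive pointwise argument caps out below the claimed exponent.

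\textbf{On existence: method of continuity vs.\ fixed point $+$ bootstrap.} Once the absorption estimate is in hand, your proposal to run the method of continuity along $\mathcal{E}_t=-\Delta+tL$ is a legitimate and arguably cleaner alternative to the paper's scheme, which is: (i) solve $Lu-\Delta u+\lambda u=g$ by a Banach contraction for $\lambda$ large (Lemmas~\ref{existence<1/2}, \ref{existence>1/2}); (ii) establish the quantitative $L^\infty$ maximum-principle bound $\|u\|_\infty\le C_\lambda\|g\|_\infty$ with $C_\lambda<2/\lambda$ via a barrier (Lemmas~\ref{vcinfty}--\ref{=1linftyestimate}); (iii) remove $\lambda u$ by an iteration $u_{k+1}$ solving $Lu_{k+1}-\Delta u_{k+1}+\lambda u_{k+1}=f+\lambda u_k$, showing the iteration is contractive in $L^\infty$ and bounded in $W^{2,p}$, then pass to the limit. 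Your route would bypass the barrier construction and the iteration entirely; you would only need the maximum principle for uniqueness, as you say. This is a genuine simplification of Steps Two and Three, \emph{provided} Step One is repaired along the lines indicated above.
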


The proof of the previous theorem is inspired by \cite{SVWZ}, and follows the next steps:
	\begin{itemize}
		\item Step one: Obtain $L^p$-estimates on the operator $L$;
		
		\item Step two: Apply the fixed point theorem to deduce that the problem
			$$-\Delta u + \lambda u = f - Lu,$$
			has a unique solution $u \in W^{2,p}(\Omega) \cap W_0^{1,p}(\Omega)$ for $\lambda >0$ large;
			
		\item Step three: Employ the maximum principle (see Theorem \ref{maxprin}) and the bootstrap method to conclude.
	\end{itemize}
\medskip	
\begin{tabular}{|l|}
	\hline
	Step one\\
	\hline
\end{tabular}

From this point onward, we assume that $\Omega$ is a $C^{1,1}$ domain in $\mathbb{R}^n$. Let  $E: W^{2,p}(\Omega) \to W^{2,p}(\mathbb{R}^n)$ denote the extension operator provided by \cite[Theorem 7.25]{GT}, which satisfies $Eu=u$ in $\Omega$ and $\|Eu\|_{2,p}\le C(\Omega)\|u\|_{2,p}$.

For clarity, we divide the argument into two lemmas according to the range of $s$, namely $s \in (0,\frac{1}{2}]$ and $s \in (\frac{1}{2},1)$.

\begin{lemma}\label{L<12Lp}
	Let $\Omega$ be a $C^{1,1}$ domain in $\mathbb{R}^n$ and $s \in (0,\frac{1}{2}]$. If $L$ is any operator of the form \eqref{defL} and \eqref{condL}, then for any $p>1$ and $u \in W^{2,p}(\Omega) \cap W_0^{1,p}(\Omega)$, the operator $L$ satisfies
		$$\|Lu\|_p \le C \Big(o(\varepsilon) \|u\|_{2,p} + \tau(\varepsilon)\|u\|_p\Big), \quad \mbox{for any} \ \varepsilon>0,$$
	where the constant $C>0$ depends only on $p,s,\Lambda,n$, and $\Omega$, and $o(\varepsilon) \to 0$ and $\tau(\varepsilon)$ is unbounded, both as $\varepsilon\to 0$.
\end{lemma}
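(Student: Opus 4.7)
The plan is a dyadic-type split of the radial variable in $Lu$ across the threshold $|r|=\varepsilon$. The far field $|r|>\varepsilon$ will be handled by Minkowski's integral inequality and the trivial bound $\|\delta^2 u(\cdot,y)\|_p\le 4\|u\|_p$, producing a factor $\varepsilon^{-2s}$ that blows up as $\varepsilon\to 0^+$ and plays the role of $\tau(\varepsilon)$. The near field $|r|\le\varepsilon$ will be handled by upgrading to the second-order $L^p$ estimate $\|\delta^2 w(\cdot,y)\|_{L^p(\mathbb{R}^n)}\le|y|^2\|D^2 w\|_{L^p(\mathbb{R}^n)}$, valid on $W^{2,p}(\mathbb{R}^n)$, which after integration against $|r|^{-1-2s}$ produces a factor $\varepsilon^{2-2s}$; this is the vanishing factor $o(\varepsilon)$, and the restriction $s\le\tfrac12$ is precisely what guarantees $2-2s\ge 1>0$ so that the radial integral in the near field converges.

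Concretely, write $\delta^2 w(x,y):=w(x+y)+w(x-y)-2w(x)$, and let $\tilde u$ denote the zero extension of $u$ to $\mathbb{R}^n$, which lies in $W^{1,p}(\mathbb{R}^n)$ but in general not in $W^{2,p}(\mathbb{R}^n)$. For the far field one estimates, using Minkowski, \eqref{condL} and $\int_{|r|>\varepsilon}|r|^{-1-2s}\,dr=\varepsilon^{-2s}/s$,
\[
\left\|\int_{S^{n-1}}\int_{|r|>\varepsilon}\frac{\delta^2\tilde u(\cdot,\theta r)}{|r|^{1+2s}}\,dr\,d\mu\right\|_{L^p(\Omega)}\le\Lambda_1\int_{|r|>\varepsilon}\frac{4\|u\|_p}{|r|^{1+2s}}\,dr\le\frac{C\Lambda_1}{s}\varepsilon^{-2s}\|u\|_p.
\]
For the near field, I would apply the extension $E:W^{2,p}(\Omega)\to W^{2,p}(\mathbb{R}^n)$ from \cite[Theorem 7.25]{GT} to obtain $v=Eu$ with $\|v\|_{W^{2,p}(\mathbb{R}^n)}\le C(\Omega)\|u\|_{2,p}$, decompose $\tilde u=v-v\mathbf{1}_{\Omega^c}$, and apply to $v$ the second-order $L^p$ estimate (proved via the identity $w(x+y)+w(x-y)-2w(x)=\int_0^1\!\int_{-t}^t y^\top D^2 w(x+\sigma y)y\,d\sigma\,dt$ together with Minkowski) to obtain
\[
\left\|\int_{S^{n-1}}\int_{|r|\le\varepsilon}\frac{\delta^2 v(\cdot,\theta r)}{|r|^{1+2s}}\,dr\,d\mu\right\|_{L^p(\Omega)}\le C\Lambda_1\int_0^\varepsilon r^{1-2s}\,dr\cdot\|D^2 v\|_p\le C\varepsilon^{2-2s}\|u\|_{2,p}.
\]

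The main obstacle is the boundary correction $\delta^2(v\mathbf{1}_{\Omega^c})(x,\theta r)$ that appears in the near field because $\tilde u\neq v$ outside $\Omega$: the cutoff $\mathbf{1}_{\Omega^c}$ destroys the quadratic cancellation, so one cannot simply reuse $\|D^2 v\|_p$. I would deal with it as follows. For $x\in\Omega$ one has $\delta^2(v\mathbf{1}_{\Omega^c})(x,\theta r)=v(x+\theta r)\mathbf{1}_{\Omega^c}(x+\theta r)+v(x-\theta r)\mathbf{1}_{\Omega^c}(x-\theta r)$, and for $|r|\le\varepsilon$ this is nonzero only when $\mathrm{dist}(x,\partial\Omega)\le\varepsilon$. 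Combining the $C^{1,1}$-regularity of $\partial\Omega$ (which yields a tubular neighborhood of measure $\le C(\Omega)\varepsilon$), the homogeneous boundary condition $u|_{\partial\Omega}=0$ (so $v$ is small in that layer, quantified by a Hardy/Poincaré-type inequality), and the $W^{2,p}$-boundedness of $E$, this correction is absorbable into the scheme $o(\varepsilon)\|u\|_{2,p}+\tau(\varepsilon)\|u\|_p$ without degrading either exponent. Adding the near- and far-field contributions then yields $\|Lu\|_p\le C(\varepsilon^{2-2s}\|u\|_{2,p}+\varepsilon^{-2s}\|u\|_p)$, which is the stated estimate with $o(\varepsilon)=\varepsilon^{2-2s}\to 0$ and $\tau(\varepsilon)=\varepsilon^{-2s}\to\infty$ as $\varepsilon\to 0^+$.
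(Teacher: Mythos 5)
Your high-level decomposition is the right skeleton---near field $|r|\le\varepsilon$ via a second-order bound on a $W^{2,p}$ extension $v=Eu$, far field via the trivial $L^p$ bound---and both of those individual estimates are correct. The decisive issue, however, is the boundary correction $\delta^2(v\mathbf{1}_{\Omega^c})$, and declaring it ``absorbable'' without a proof leaves a genuine gap: that term is where essentially all of the lemma's technical content lives. Concretely, when $\mathbf{1}_{\Omega^c}(x+\theta r)=1$ the radial integration in the near field is confined to $d(x)\le|r|\le\varepsilon$ (with $d(x):=\operatorname{dist}(x,\partial\Omega)$), so $\int_{d(x)}^\varepsilon|r|^{-1-2s}\,dr\sim d(x)^{-2s}$ is singular near $\partial\Omega$. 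Beating that singularity by $|v(x+\theta r)|\lesssim\|\nabla v\|_\infty|r|$ requires $p>n$, but Lemma~\ref{L<12Lp} must cover all $p>1$. The paper handles the general $p$ through the sphere decomposition into $S_1^\pm, S_2^\pm$, the elementary observation that $x\pm\theta r\notin\Omega$ forces $|r|>d(x)$ (so $|r|^{-1-2s}\le |r|^{-2s}/d(x)$), the Hardy inequality $\|u/d\|_p\lesssim\|\nabla u\|_p$ on $W^{1,p}_0(\Omega)$, and, for the borderline $s=\tfrac12$, a further case split $p>n$, $p=n$, $p<n$ together with an auxiliary exponent $\gamma$. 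None of this is carried out in your sketch, and these steps---not the $\|D^2v\|_p$ bound on the extension---are the crux.

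Two smaller points. Your explanation of the role of $s\le\tfrac12$ is inaccurate: $\int_0^\varepsilon r^{1-2s}\,dr$ converges for every $s<1$, so $s\le\tfrac12$ has nothing to do with your near-field estimate on $v$. The restriction really governs the boundary term, via the convergence (for $s<\tfrac12$) or borderline divergence (for $s=\tfrac12$) of the \emph{first-order} radial integral $\int_0^1 r^{-2s}\,dr$, which is what the Hardy argument actually produces. Moreover, for $s<\tfrac12$ the paper takes a simpler route than yours: it fixes the cutoff at $|r|=1$, works with first-order differences and the zero extension of $u$, proves $\|Lu\|_p\le C\|u\|_{1,p}$ outright, and only afterwards obtains the $\varepsilon$-dependence from the Sobolev interpolation $\|u\|_{1,p}\le\varepsilon\|u\|_{2,p}+C_\varepsilon\|u\|_p$. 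The variable cutoff $|r|=\varepsilon$ that you deploy uniformly is needed in the paper only for $s=\tfrac12$, which is exactly the case where the boundary correction is hardest and your sketch is weakest.
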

\begin{proof}
	Let us start with the case $s \in (0,\frac{1}{2})$. Consider $u \in W^{2,p}(\Omega)\cap W_0^{1,p}(\Omega)$, and note that
		\begin{eqnarray}\label{AB}
			Lu(x) &=& \int_{S^{n-1}} \int_{\{|r|> 1\}} \dfrac{u(x+\theta r)+u(x-\theta r)-2u(x)}{|r|^{1+2s}}\ dr \ d\mu \nonumber \\
			&& + \int_{S^{n-1}} \int_{\{|r|\le 1\}} \dfrac{u(x+\theta r)+u(x-\theta r)-2u(x)}{|r|^{1+2s}}\ dr \ d\mu \nonumber \\
			&\eqcolon& I +II.
		\end{eqnarray}
	We may estimate, by using H\"{o}lder inequality with $1/p+1/q=1$
		\begin{eqnarray*}
			\|I\|_p &\le& \left(\int_\Omega \left[\int_{S^{n-1}} \int_{\{|r|> 1\}} \dfrac{|u(x+\theta r)| + |u(x-\theta r)| + 2|u(x)|}{|r|^{1+2s}}dr \ d\mu\right]^pdx\right)^{1/p} \\
			&=& \left(\int_{\mathbb{R}^n} \left[\int_{S^{n-1}} \int_{\{|r|> 1\}} \dfrac{|Eu(x+\theta r)| + |E(x-\theta r)| + 2|Eu(x)|}{|r|^{\frac{1+2s}{p}} \cdot|r|^{\frac{1+2s}{q}}}dr \ d\mu\right]^pdx\right)^{1/p} \\
			&\le& C(p) \left(\int_{\mathbb{R}^n}\int_{S^{n-1}} \int_{\{|r|> 1\}}\dfrac{|Eu(x+\theta r)|^p + |E(x-\theta r)|^p + 2^p|Eu(x)|^p}{|r|^{1+2s}}\ dr \ d\mu \ dx\right)^{1/p}  \\
			&& \hspace{8cm} \times \left(\int_{\{|r|> 1\}} \dfrac{1}{|r|^{1+2s}} dr\right)^{1/q}.
		\end{eqnarray*}
	Hence, by Fubini theorem and properties of the Extension operator, we get
		\begin{eqnarray}\label{estI}
			\|I\|_p &\le& C(p,\Lambda_1) \|E u\|_p \left(\int_{\{|r|> 1\}} \dfrac{1}{|r|^{1+2s}} dr\right) \nonumber\\
			&\le& C(p,\Lambda_1,s) \|E u\|_p \nonumber\\
			&\le& C(p,\Lambda_1,s,\Omega) \|u\|_{1,p}.
		\end{eqnarray}
	To estimate $\|II\|_p$, since $u = 0$ in $\mathbb{R}^n \setminus \Omega$, we may rewrite $II$ as
		\begin{eqnarray*}
			II &=& \int_{S^{n-1}} \int_{\{|r|\le 1\}} \dfrac{u(x+\theta r)+u(x-\theta r)-2u(x)}{|r|^{1+2s}}\ dr \ d\mu \\
			&=& \int_{S^+_1\cap S_1^-} \int_{\{|r|\le 1\}} \dfrac{u(x+\theta r) + u(x-\theta r)-2u(x)}{|r|^{1+2s}}\ dr \ d\mu \\
			&& + \int_{S^+_1\cap S_2^-} \int_{\{|r|\le 1\}} \dfrac{u(x+\theta r)-2u(x)}{|r|^{1+2s}} dr  d\mu + \int_{S^+_2\cap S_1^-} \int_{\{|r|\le 1\}} \dfrac{u(x-\theta r)-2u(x)}{|r|^{1+2s}}dr  d\mu \\
			&&+ 2\int_{S_2^+\cap S^-_2} \int_{\{|r|\le 1\}} \dfrac{-u(x)}{|r|^{1+2s}}\ dr \ d\mu \\
			&\eqcolon& A+B+C+D,
		\end{eqnarray*}
	where
		$$S_1^\pm \coloneqq \big\{ \theta \in S^{n-1} \:\: : \:\: x \pm \theta r \in \Omega\big\}\quad \mbox{and} \quad S_2^\pm \coloneqq \big\{ \theta \in S^{n-1} \:\: : \:\: x \pm \theta r \notin \Omega\big\}.$$
	As before, considering $1/p+1/q=1$, H\"{o}lder inequality, Fubini theorem and properties of the Extension operator yield
		\begin{eqnarray}
			&&\left(\int_\Omega \left[\int_{S^+_1\cap S^-_1} \int_{\{|r|\le 1\}} \dfrac{u(x\pm\theta r)-u(x)}{|r|^{1+2s}}\ dr \ d\mu\right]^p dx\right)^{1/p}\nonumber \\
			&&\le \left(\int_{\mathbb{R}^n} \left[\int_{S^{n-1}} \int_{\{|r|\le 1\}} \int_{0}^{1}|\nabla Eu(x\pm t\theta r)|\dfrac{1}{|r|^{2s}}\ dt \ dr \ d\mu\right]^p dx\right)^{1/p}\nonumber\\
			&& \le \left(\int_{\mathbb{R}^n} \int_{S^{n-1}} \int_{\{|r|\le 1\}} |\nabla Eu(x\pm t\theta r)|^p \dfrac{1}{|r|^{2s}} \ dr\ d\mu \ dx\right)^{1/p} \left(\int_{S^{n-1}} \int_{\{|r|\le 1\}} \dfrac{1}{|r|^{2s}} \ dr \ d\mu \right)^{1/q}\nonumber \\
			&& \le \|Eu\|_{1,p} \left(\int_{S^{n-1}} \int_{\{|r|\le 1\}} \dfrac{1}{|r|^{2s}} \ dr \ d\mu \right)\nonumber \\
			&& \le C(p,\Lambda_1,s,\Omega) \|u\|_{1,p}, \nonumber
		\end{eqnarray}
	which implies
		\begin{equation}\label{A}
			\|A\|_p \le C(p,\Lambda_1,s,\Omega) \|u\|_{1,p}.
		\end{equation}
	Similarly, we have
		$$\left(\int_\Omega \left[\int_{S_1^+ \cap S^-_2} \int_{\{|r|\le 1\}} \dfrac{u(x+\theta r)-u(x)}{|r|^{1+2s}}\ dr \ d\mu\right]^p dx\right)^{1/p} \le C(p,\Lambda_1,s,\Omega) \|u\|_{1,p},$$
	and
		$$\left(\int_\Omega \left[\int_{S_2^+ \cap S^-_1} \int_{\{|r|\le 1\}} \dfrac{u(x-\theta r)-u(x)}{|r|^{1+2s}}\ dr \ d\mu\right]^p dx\right)^{1/p} \le C(p,\Lambda_1,s,\Omega) \|u\|_{1,p},$$
	Thus, in order to complete the estimates for $B$ and $C$, and to obtain the estimate for $D$, it remains to analyze the integrals
		$$\int_{S_1^+\cap S^-_2} \int_{\{|r|\le 1\}} \dfrac{-u(x)}{|r|^{1+2s}} dr  d\mu, \; \; \int_{S_2^+\cap S^-_1} \int_{\{|r|\le 1\}} \dfrac{-u(x)}{|r|^{1+2s}} dr  d\mu, \; \;\mbox{and} \; \; \int_{S_2^+\cap S^-_2} \int_{\{|r|\le 1\}} \dfrac{-u(x)}{|r|^{1+2s}} dr  d\mu.$$
	Since all three integrals can be handled in the same way---using the fact that $|r|>d(x)$ whenever $x+\theta r \notin \Omega$ or $x-\theta r \notin \Omega$, with $\theta \in S^{n-1}$, where $d(x) \coloneqq \mbox{dist}(x,\partial\Omega)$---it suffices to establish the estimate for the first integral.
	
	Indeed, for $u \in W_0^{1,p}(\Omega)$
	\begin{eqnarray}
		\left(\int_{\Omega} \left[\int_{S_1^+\cap S^-_2} \int_{\{|r|\le 1\}} \dfrac{|u(x)|}{|r|^{1+2s}}\ dr \ d\mu\right]^p dx \right)^{1/p} &\le& \left(\int_{\Omega} \left[\int_{S_1^+\cap S^-_2} \int_{\{|r|\le 1\}} \dfrac{|u(x)|}{|r|^{2s}d(x)}\ dr \ d\mu\right]^p dx\right)^{1/p} \nonumber \\
		&\le& \left(\int_{\Omega} \left| \dfrac{u(x)}{d(x)}\right|^p dx\right) \left(\int_{S^{n-1}} \int_{\{|r|\le 1\}} \dfrac{1}{r^{2s}} \ dr \ d\mu\right)\nonumber \\
		&\le& C(p,\Lambda_1,s) \left(\int_{\Omega} \int_{0}^{1} |\nabla u(x+t(x-x_0))|^p \ dt \ dx\right)\nonumber \\
		&\le& C(p,\Lambda_1,s) \|u\|_{1,p}.\nonumber
	\end{eqnarray}
	where $x_0 \in \partial \Omega$ such that $d(x) = \mbox{dist}(x,x_0)$. Hence, by the previous estimates and \eqref{A} we conclude that
		\begin{equation}\label{estII}
			\|II\|_p \le C(p,\Lambda_1,s,\Omega) \|u\|_{1,p}.
		\end{equation}	
	Therefore, by combining \eqref{AB}, \eqref{estI}, \eqref{estII}, Sobolev interpolation and Young inequality we get for any $\varepsilon>0$
		$$\|L u(x)\|_p \le C \Big(\varepsilon \|u\|_{2,p} + \tau(\varepsilon) \|u\|_p\Big), \quad \mbox{for all} \ s\in \left(0,\frac{1}{2}\right).$$
		
	Now, we need to prove for the limiting case $s=\frac{1}{2}$. We observe that, when dividing the operator $L$ as in \eqref{AB}, the estimate of part $I$ follows exactly as before since $\int_{\{|r|> 1\}}|r|^{-2} dr$ remains finite. The same does not occur for the estimate of $II$, since $\int_{\{|r|\le1\}} |r|^{-2} dr$ diverges. Thus, we only need to improve the estimate of $II$.
	
	Suppose first that $p>n$. Then, the Sobolev embedding yields $u \in C^1(\overline{\Omega})$. Next, given $\varepsilon>0$, by similar arguments in \eqref{A}, we have by H\"{o}lder inequality, Fubini theorem and properties of the Extension operator that
		\begin{eqnarray*}
			&&\left(\int_\Omega \left[ \int_{S^+_1\cap S_1^-} \int_{\{|r|\le \varepsilon\}} \dfrac{u(x+\theta r) + u(x-\theta r)-2u(x)}{|r|^{2}} dr  d\mu\right]^p dx\right)^{1/p} \\
			&&\le \left(\int_\Omega \left[ \int_{S^+_1\cap S_1^-} \int_{\{|r|\le \varepsilon\}} \dfrac{\int_{0}^{1}\int_{-t}^{t}D_{ij}u(x+t'\theta r)\theta_i\theta_j r^2 dt' dt}{|r|^{2}} dr  d\mu\right]^p dx\right)^{1/p} \\
			&&\le \left(\int_\Omega \left[ \int_{S^+_1\cap S_1^-} \int_{\{|r|\le \varepsilon\}} \dfrac{\int_{0}^{1}\int_{-t}^{t}|D^2 u(x+t'\theta r)dt' dt}{|\theta|^{-2}} dr  d\mu\right]^p dx\right)^{1/p} \\
			&&\le \|Eu\|_{2,p} \left(\int_{S^{n-1}}\int_{\{|r|\le \varepsilon\}} |\theta|^2 \ dr \ d\mu\right) \\
			&&\le C(p,n,\Lambda_1,\Omega) o(\varepsilon)\|u\|_{2,p},
		\end{eqnarray*}
	while
		\begin{eqnarray}\label{I>ve}
			&&\left(\int_\Omega \left[ \int_{S^+_1\cap S^-_1} \int_{\{\varepsilon <|r|\le 1\}} \dfrac{u(x\pm\theta r) -u(x)}{|r|^{2}} dr  d\mu\right]^p dx\right)^{1/p} \nonumber \\
			&& \le \left(\int_{\mathbb{R}^n} \left[ \int_{S^{n-1}} \int_{\{\varepsilon<|r|\le 1\}} \int_{0}^{1}\dfrac{ |\nabla Eu(x\pm t\theta r)|}{|r|}dt dr  d\mu\right]^p dx\right)^{1/p}\nonumber \\
			&& \le \|Eu\|_{1,p}\left(\int_{S^{n-1}}\int_{\{\varepsilon<|r|\le 1\}} \dfrac{1}{|r|} \ dr \ d\mu\right)\nonumber\\
			&&\le C(p,n,\Lambda_1,\Omega) \tau(\varepsilon)\|u\|_{1,p},
		\end{eqnarray}
	where $o(\varepsilon)$ goes to $0$ and $\tau(\varepsilon)$ is arbitrary and unbounded, both as $\varepsilon \to 0$. Gathering the above estimates, we obtain that
		\begin{equation}
			\left(\int_\Omega \left[ \int_{S^+_1\cap S_1^-} \int_{\{|r|\le 1\}} \dfrac{u(x+\theta r) + u(x-\theta r)-2u(x)}{|r|^{2}} dr  d\mu\right]^p dx\right)^{1/p} \le C \Big(o(\varepsilon)\|u\|_{2,p} + \tau(\varepsilon)\|u\|_{1,p}\Big).
		\end{equation}
	The estimates of the integrals $B,C$ and $D$ follow the same line and here we will only prove for $B$. We once again define $d(x)\coloneqq \mbox{dist}\{x,\partial\Omega\}$ for any $x \in \Omega$. Since $u \in C^1(\overline{\Omega})$, with $u|_{\partial \Omega}=0$, and given that for any $p>n$, there exists $\gamma>0$ such that $p \le \frac{1}{\gamma}$, it follows that for any $\varepsilon \in (0,1)$ we have
		$$\int_{S_1^+\cap S_2^-} \int_{\{|r|\le \varepsilon\}} \dfrac{u(x+\theta r) -u(x)}{|r|^2} \ dr \ d\mu \le \int_{S_1^+\cap S_2^-} \int_{\{|r|\le \varepsilon\}} \dfrac{\|\nabla u(x)\|_\infty}{|r|^{1-\gamma}d(x)^{\gamma}}\ dr \ d\mu,$$
	as well as,
		$$\int_{S_1^+\cap S_2^-} \int_{\{|r|\le \varepsilon\}} \dfrac{-u(x)}{|r|^2} \ dr \ d\mu \le \int_{S_1^+\cap S_2^-} \int_{\{|r|\le \varepsilon\}} \dfrac{\|\nabla u(x)\|_\infty}{|r|^{1-\gamma}d(x)^{\gamma}}\ dr \ d\mu.$$
	Thus, by combining these estimates with the Sobolev inequality we obtain
		\begin{eqnarray*}
			&&\left(\int_\Omega\left[\int_{S^+_1\cap S_2^-} \int_{\{|r|\le \varepsilon\}} \dfrac{u(x+\theta r)-2u(x)}{|r|^{2}} dr  d\mu\right]^p dx \right)^{1/p} \\
			&& \le \left(\int_\Omega\left[\int_{S_1^+\cap S_2^-} \int_{\{|r|\le \varepsilon\}} \dfrac{\|\nabla u(x)\|_\infty}{|r|^{1-\gamma}d(x)^{\gamma}}\ dr \ d\mu\right]^p dx \right)^{1/p}\\
			&& \le C(n,p,\Lambda_1)o(\varepsilon)\|u\|_{2,p},
		\end{eqnarray*}
	which combined with \eqref{I>ve} produces
		$$\|B\|_p \le C\Big(o(\varepsilon)\|u\|_{2,p} + \tau(\varepsilon)\|u\|_{1,p}\Big).$$
	By same reasoning we may prove the estimates of $C$ and $D$. As a consequence of all these estimates, we obtain the desired inequality.
	
	The case $p=n$ allows us to claim that $u \in C^{0,1}(\overline{\Omega})$ which combined with the Sobolev inequality implies that $\|\nabla u(x)\|_\infty \le C(n,p)\|u\|_{2,n}$. Thus, the inequality follows from the case $p>n$.
	
	Finally, let us consider the case $p<n$. By same reasoning as before in the case $p>n$, we only need to verify the estimate of $B$ restrict to $\{|r|\le \varepsilon\}$. First note that
		\begin{eqnarray*}
			&&\left(\int_\Omega\left[\int_{S^+_1\cap S_2^-} \int_{\{|r|\le \varepsilon\}} \dfrac{u(x+\theta r)-u(x)}{|r|^{2}} dr  d\mu\right]^p dx \right)^{1/p}\nonumber \\
			&& \le \left(\int_\Omega\left[\int_{S_1^+\cap S_2^-} \int_{\{|r|\le \varepsilon\}} \dfrac{\int_0^1 |\nabla u(x+t\theta r)|dt}{|r|^{1-\gamma}d(x)^\gamma}\ dr \ d\mu\right]^p dx \right)^{1/p}\nonumber\\
			&& \le \left(\int_{\mathbb{R}^n}\left[\int_{S^{n-1}} \int_{\{|r|\le \varepsilon\}} \dfrac{\int_0^1 |\nabla Eu(x+t\theta r)|dt}{|r|^{1-\gamma}[d(x)^\gamma\chi_\Omega + (d(x)+1)\chi_{\mathbb{R}^n\setminus\Omega}]}\ dr \ d\mu\right]^p dx \right)^{1/p}\nonumber\\
			&&\le \left\|\dfrac{|\nabla Eu|}{[d(x)^\gamma\chi_\Omega + (d(x)+1)\chi_{\mathbb{R}^n\setminus\Omega}]}\right\|_p\left(\int_{S^{n-1}}\int_{\{|r|\le \varepsilon\}} \dfrac{1}{|r|^{1-\gamma}}\ dr \ d\mu\right)
		\end{eqnarray*}	
	Next, considering $q$ such that $p<n<q<\frac{1}{\gamma}$, the H\"{o}lder and Sobolev inequalities ensure that
		\begin{eqnarray}\label{hardine}
			&&\left(\int_\Omega\left[\int_{S^+_1\cap S_2^-} \int_{\{|r|\le \varepsilon\}} \dfrac{u(x+\theta r)-u(x)}{|r|^{2}} dr  d\mu\right]^p dx \right)^{1/p} \\
			&&\le C(p,\Lambda_1)\varepsilon^\gamma \left(\int_{\mathbb{R}^n} |\nabla Eu|^{\frac{pq}{q-p}}\ dx\right)^{\frac{q-p}{pq}}\left(\int_{\mathbb{R}^n} \dfrac{1}{[d(x)^\gamma\chi_\Omega + (d(x)+1)\chi_{\mathbb{R}^n\setminus\Omega}]^q} \ dx\right)^{1/q}\nonumber\\
			&&\le C(n,p,\Lambda_1,\Omega) \varepsilon^\gamma \left(\int_{\mathbb{R}^n} |\nabla Eu|^{\frac{pq}{q-p}}\ dx\right)^{\frac{q-p}{pq}}\\
			&&\le C(n,p,\Lambda_1,\Omega) \varepsilon^\gamma \|E u\|_{2,p} \nonumber \\
			&& \le C(n,p,\Lambda_1)o(\varepsilon)\|u\|_{2,p}, \nonumber
		\end{eqnarray}	
	where the inequality \eqref{hardine} is possible since $q \in (n,n/\alpha)$. Also, we get
		\begin{eqnarray*}
			\left(\int_\Omega \left[\int_{S^+_1\cap S_2^-} \int_{\{|r|\le \varepsilon\}} \dfrac{-u(x)}{|r|^2} \ dr \ d\mu\right]^p dx\right)^{\frac{1}{p}} &\le& \left(\int_\Omega \left[\int_{S^+_1\cap S_2^-} \int_{\{|r|\le \varepsilon\}} \dfrac{|u(x)|}{|r|^{1-\gamma}d(x)^{1+\gamma}} \ dr \ d\mu\right]^p dx\right)^{\frac{1}{p}} \\
			&\le& C(p,\Lambda_1,n,\Omega)\varepsilon^\gamma \left(\int_\Omega \left|\dfrac{u(x)}{d(x)}\right|^{\frac{pq}{q-p}} dx\right)^{\frac{q-p}{pq}} \\
			&\le& C(p,\Lambda_1,n,\Omega)\varepsilon^\gamma \left(\int_{\mathbb{R}^n} |\nabla Eu|^{\frac{pq}{q-p}} dx\right)^{\frac{q-p}{pq}} \\
			&\le& C(p,\Lambda_1,n,\Omega) o(\varepsilon) \|u\|_{2,p}.
		\end{eqnarray*}
	By combining the above two estimates, we obtain
		$$\|B|_{\{|r|<\varepsilon\}}\|_p \le C(p,\Lambda_1,n,\Omega) o(\varepsilon) \|u\|_{2,p}.$$
	Therefore, the result follows for all the cases.
\end{proof}

Now, the next lemma extend the previous result for the case $s \in (\frac{1}{2},1)$.

\begin{lemma}\label{L>12Lp}
	Let $\Omega$ be a $C^{1,1}$ domain in $\mathbb{R}^n$ and $s \in (\frac{1}{2},1)$. If $L$ is any operator of the form \eqref{defL} and \eqref{condL}, then for any $n<p<\frac{n}{2s-1}$ and $u \in W^{2,p}(\Omega) \cap W_0^{1,p}(\Omega)$, the operator $L$ satisfies
		$$\|Lu\|_p \le C \Big(o(\varepsilon) \|u\|_{2,p} + \tau(\varepsilon)\|u\|_p\Big), \quad \mbox{for any} \ \varepsilon>0,$$
	where the constant $C>0$ depends only on $n,p$, and $\Omega$, and $o(\varepsilon) \to 0$ and $\tau(\varepsilon)$ is unbounded, both as $\varepsilon\to 0$.
\end{lemma}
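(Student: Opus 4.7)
The plan is to follow the same decomposition $Lu = I + II$ as in the proof of Lemma \ref{L<12Lp}, where $I$ is the integral over $\{|r|>1\}$ and $II$ that over $\{|r|\le 1\}$. The long-range part $I$ is treated verbatim: since $\int_{|r|>1}|r|^{-(1+2s)}\, dr$ remains finite for every $s\in(1/2,1)$, an application of H\"older's inequality together with Fubini and the extension operator $E\colon W^{2,p}(\Omega)\to W^{2,p}(\mathbb{R}^n)$ yields $\|I\|_p\le C\|u\|_p$.

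The substantive modification is for $II$. Because now $2s>1$, the kernel $|r|^{-(1+2s)}$ is too singular for a first-order Taylor expansion alone to compensate, so we must use a second-order expansion throughout. I would further split the sphere as before into $S_1^\pm, S_2^\pm$, producing pieces $A,B,C,D$. For the interior piece $A$ (both endpoints in $\Omega$) and radii $|r|\le\varepsilon$, write
\[
u(x+\theta r)+u(x-\theta r)-2u(x) = r^2\int_{0}^{1}\!\int_{-t}^{t} D_{ij}(Eu)(x+t'\theta r)\,\theta_i\theta_j\,dt'\,dt,
\]
so that the effective kernel becomes $|r|^{1-2s}$; since $1-2s>-1$, it is integrable at the origin, and H\"older together with Fubini yields a contribution of size $o(\varepsilon)\|Eu\|_{2,p}\le C(\Omega) o(\varepsilon)\|u\|_{2,p}$. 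On the complementary annulus $\{\varepsilon<|r|\le1\}$ I would retain the first-order expansion of Lemma \ref{L<12Lp} (cf.\ estimate \eqref{I>ve}), yielding a bound of order $\tau(\varepsilon)\|Eu\|_{1,p}$.

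The main obstacle is the boundary pieces $B,C,D$, in which at least one of $x\pm\theta r$ lies outside $\Omega$ and only the zero extension of $u$ is available there. Following the Hardy-type argument used at the end of Lemma \ref{L<12Lp}, I exploit the fact that $x\pm\theta r\notin\Omega$ forces $|r|\ge d(x)$, so one can split a power of the kernel against the distance function,
\[
|r|^{-(1+2s)} \le |r|^{-(1+2s-\gamma)}\,d(x)^{-\gamma},
\]
for a well-chosen $\gamma\in(2s-1,1)$. This choice renders $|r|^{-(1+2s-\gamma)}$ integrable in $r$ at the origin, while the factor $d(x)^{-\gamma}$ is absorbed by a Hardy inequality after picking an auxiliary exponent $q$ with $n<q<n/\gamma$ (whose existence requires precisely the restriction $p<n/(2s-1)$). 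Writing $u=Eu$ on $\Omega$, bounding $|u(x)|$ and $|u(x\pm\theta r)-u(x)|$ by their gradient-integral representations, and applying H\"older in the form $L^{pq/(q-p)}\cdot L^q$ together with the Hardy-type estimate $\|u/d\|_{pq/(q-p)}\le C\|\nabla Eu\|_{pq/(q-p)}$ reduces matters to the Sobolev embedding $W^{2,p}(\mathbb{R}^n)\hookrightarrow W^{1,pq/(q-p)}(\mathbb{R}^n)$, which holds precisely in the admissible range; the hypothesis $p>n$ gives $\nabla Eu$ enough pointwise regularity to control the boundary-layer correction.

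Putting the interior and boundary pieces together gives $\|II\|_p\le C\bigl(o(\varepsilon)\|u\|_{2,p}+\tau(\varepsilon)\|u\|_{1,p}\bigr)$. Combining with the estimate on $\|I\|_p$ and absorbing the intermediate norm $\|u\|_{1,p}$ by standard Sobolev interpolation and Young's inequality (with a fresh $\varepsilon$), the desired inequality follows.
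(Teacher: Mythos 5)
Your decomposition into $I$, $II$, and then $A,B,C,D$ is the same as the paper's, and your treatment of $I$ and the interior piece $A$ (second-order Taylor on $|r|\le\varepsilon$, first-order on $\varepsilon<|r|\le1$) matches. The place where the paper actually does something new for $s>\tfrac12$ is the boundary piece $B$ restricted to $\{|r|\le\varepsilon\}$, and there your argument has a gap.

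Your plan is to extend the Hardy-type argument from Lemma \ref{L<12Lp}: use the gradient representation to produce one power of $|r|$, split $|r|^{-2s}\le|r|^{-(2s-\gamma)}d(x)^{-\gamma}$ via $|r|\ge d(x)$, pick $\gamma>2s-1$ so the radial integral is $o(\varepsilon)$, then H\"older with an auxiliary $q$ so that $\|d^{-\gamma}\|_q<\infty$. The problem is that $\|d^{-\gamma}\|_{L^q(\Omega)}<\infty$ forces $\gamma q<1$ (the boundary $\partial\Omega$ has codimension one, so $\int_\Omega d(x)^{-\gamma q}\,dx\approx\int_0^1 t^{-\gamma q}\,dt$), i.e.\ $q<1/\gamma$, not the $q<n/\gamma$ you state. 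For H\"older one also needs $q>p$, so the method requires $p<1/\gamma<1/(2s-1)$, which is strictly smaller than the claimed range $p<n/(2s-1)$ when $n\ge 2$. Equivalently, $q>n$ and $q<1/\gamma$ force $\gamma<1/n$, which is incompatible with $\gamma>2s-1$ unless $n(2s-1)<1$; most of the admissible $(s,n)$ range is excluded. So the Hardy route, as written, does not close.

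The paper avoids this with a geometric observation that replaces the Hardy step entirely: it fixes $\gamma$ with $n<p<n/\gamma<n/(2s-1)$ and introduces $\Omega_\varepsilon=\{x\in\Omega:d(x)>\varepsilon\}$. If $x\in\Omega_\varepsilon$ and $x-\theta r\notin\Omega$ then $|r|\ge d(x)>\varepsilon$, so $S_1^+\cap S_2^-\cap\{|r|\le\varepsilon\}=\varnothing$ and the integral in $B|_{\{|r|\le\varepsilon\}}$ vanishes identically on $\Omega_\varepsilon$. On the thin layer $\Omega\setminus\Omega_\varepsilon$, since $p>n$ one has $\|\nabla u\|_\infty\le C\|u\|_{2,p}$, the kernel is bounded by $\|\nabla u\|_\infty|r|^{-(2s-\gamma)}d(x)^{-\gamma}$, the radial integral gives $\varepsilon^{\gamma-2s+1}$, and the $L^p$ norm of $d(x)^{-\gamma}$ over a layer of width $\varepsilon$ gives $\varepsilon^{n/p-\gamma}$. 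The product is $\varepsilon^{n/p-2s+1}$, and it is exactly here — the exponent $n/p-2s+1>0$ — that the hypothesis $p<n/(2s-1)$ enters. Your proposal is missing the vanishing-on-$\Omega_\varepsilon$ step and assigns $p<n/(2s-1)$ the wrong role (making it responsible for a Sobolev embedding rather than the thin-layer power count), and without those corrections the estimate for $B|_{\{|r|\le\varepsilon\}}$ does not follow from what you have written.
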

\begin{proof}
	Adopting the same notation as in the proof of Lemma \ref{L<12Lp}, it is sufficient to verify the estimate for $B$ restrict to the range $\{|r|\le \varepsilon\}$.
	
	To this end, we introduce $\Omega_\varepsilon \coloneqq \{x \in \Omega \; : \; d(x)>\varepsilon\}$ which excludes a thin boundary layer of width $\varepsilon$. Moreover, we fix a parameter $\gamma>0$ satisfying
		$$n<p<\dfrac{n}{\gamma} < \dfrac{n}{2s-1},$$
	a choice that will be crucial for balancing the integrability conditions in the subsequent estimates. Note that for any $x \in \Omega_\varepsilon$ the set $S_1^+\cap S_2^- \cap \{|r|\le \varepsilon\}$ is empty. Consequently,
		$$\left(\int_{\Omega_\varepsilon} \left[\int_{S^+_1\cap S_2^-}\int_{\{|r|\le \varepsilon\}} \dfrac{u(x+\theta r)-u(x)}{|r|^{1+2s}}\ dr \ d\mu\right]^p dx\right)^{1/p} =0,$$
	and likewise,
		$$\left(\int_{\Omega_\varepsilon} \left[\int_{S^+_1\cap S_2^-}\int_{\{|r|\le \varepsilon\}} \dfrac{-u(x)}{|r|^{1+2s}}\ dr \ d\mu\right]^p dx\right)^{1/p} =0.$$
	Hence, we obtain
		\begin{eqnarray*}
			&&\left(\int_{\Omega} \left[\int_{S^+_1\cap S_2^-}\int_{\{|r|\le \varepsilon\}} \dfrac{u(x+\theta r)-u(x)}{|r|^{1+2s}}\ dr\ d\mu\right]^p dx\right)^{1/p} \\
			&&\le \left(\int_{\Omega\setminus\Omega_\varepsilon} \left[\int_{S^+_1\cap S_2^-}\int_{\{|r|\le \varepsilon\}} \dfrac{\|\nabla u\|_\infty}{|r|^{2s-\gamma}d(x)^\gamma}\ dr\  d\mu\right]^p dx\right)^{1/p} \\
			&& \le C(p,s,\gamma,\Lambda_1) \|\nabla u\|_\infty \varepsilon^{\gamma-2s+1}\left(\int_{\Omega\setminus\Omega_\varepsilon} d(x)^{-\gamma p} \ dx\right)^{1/p} \\
			&& \le C(p,s,\gamma,\Lambda_1) \|\nabla u\|_\infty \varepsilon^{\gamma-2s+1}\left(\int_{y\in\partial\Omega}\int_{0<|x-y|\le \varepsilon} |x-y|^{-\gamma p} \ dx\ dy\right)^{1/p} \\
			&&\le C(p,s,\gamma,\Lambda_1) \|\nabla u\|_\infty \varepsilon^{\gamma-2s+1} \cdot\varepsilon^{-\gamma \frac{n}{p}}\\
			&&\le C(p,s,\gamma,\Lambda_1,\Omega)  \varepsilon^{\frac{n}{p} -2s+1}\|u\|_{2,p}.
		\end{eqnarray*}
	Similarly,
		\begin{eqnarray*}
			&&\left(\int_{\Omega} \left[\int_{S^+_1\cap S_2^-}\int_{\{|r|\le \varepsilon\}} \dfrac{-u(x)}{|r|^{1+2s}}\ dr\ d\mu\right]^p dx\right)^{1/p} \\
			&&\le \left(\int_{\Omega\setminus\Omega_\varepsilon} \left[\int_{S^+_1\cap S_2^-}\int_{\{|r|\le \varepsilon\}} \dfrac{\|\nabla u\|_\infty}{|r|^{2s-\gamma}d(x)^\gamma}\ dr\  d\mu\right]^p dx\right)^{1/p} \\
			&&\le C(p,s,\gamma,\Lambda_1,\Omega)  \varepsilon^{\frac{n}{p} -2s+1}\|u\|_{2,p}.
		\end{eqnarray*}
	These estimates ensure that
		$$\|B|_{\{|r|<\varepsilon\}}\|_p \le C(p,s,\gamma,\Lambda_1,\Omega) o(\varepsilon) \|u\|_{2,p}.$$
	Therefore, following the same reasoning as in the proof of Lemma \ref{L<12Lp} for the terms $I$, $A$, $B_{\{\varepsilon<|r|\le 1\}}$, $C$ and $D$, we conclude the desired result.
\end{proof}

\medskip\noindent	
\begin{tabular}{|l|}
	\hline
	Step two\\
	\hline
\end{tabular}

The main objective of this step is to establish, via the fixed point theorem, the existence and uniqueness of solutions to the linear perturbation
	$$-\Delta u + \lambda u = f -Lu.$$
We begin by recalling that if $p>1$ is given, there exists $\lambda_0$ such that the problem
	$$-\Delta u + \lambda u = f,$$
admits a unique solution $u \in W^{2,p}(\Omega)\cap W_0^{1,p}(\Omega)$ for every $\lambda\ge \lambda_0$ and $f\in L^p(\Omega)$. Moreover, the following estimates hold:
	\begin{equation}\label{estnorm2p}
		\|u\|_{2,p} \le C\|f\|_p,
	\end{equation}
and
	\begin{equation}\label{estnormp}
		(\lambda-\lambda_0)\|u\|_p \le C\|f\|_p,
	\end{equation}
where the constant $C>0$ depends only on the data of the problem, but is independent of $u$ and $\lambda$.

As before in the Step one, we divide the proof into two lemmas according to the range of $s$.

\begin{lemma}\label{existence<1/2}
	Let $\Omega\subset\mathbb{R}^n$ be a $C^{1,1}$-domain, and let $s \in (0,\frac{1}{2}]$, $f\in L^p(\Omega)$, with $p>1$. Assume that $L$ is any operator of the form \eqref{defL} and \eqref{condL}. Then, there exists $\widetilde{\lambda}>0$ sufficiently large and independent of $f$, such that the problem
		$$Lu -\Delta u + \lambda u = f, \quad \mbox{in} \ \Omega,$$
	admits a unique solution $u \in W^{2,p}(\Omega)\cap W_0^{1,p}(\Omega)$ for every $\lambda\ge \widetilde{\lambda}$.
\end{lemma}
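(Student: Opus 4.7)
The plan is to prove existence and uniqueness by applying Banach's contraction mapping theorem. Define the map
$$T\colon W^{2,p}(\Omega)\cap W_0^{1,p}(\Omega)\longrightarrow W^{2,p}(\Omega)\cap W_0^{1,p}(\Omega), \qquad T(v)=u,$$
where $u$ is the unique $W^{2,p}\cap W_0^{1,p}$-solution to the Laplace-type problem
$$-\Delta u+\lambda u \;=\; f-Lv \quad \text{in } \Omega, \qquad u=0 \text{ on } \partial\Omega.$$
This is well defined for every $\lambda\ge\lambda_0$: Lemma \ref{L<12Lp} guarantees $Lv\in L^p(\Omega)$ whenever $v\in W^{2,p}(\Omega)\cap W_0^{1,p}(\Omega)$, and the classical $L^p$-theory for $-\Delta+\lambda$ produces a unique $u$ satisfying the two bounds \eqref{estnorm2p} and \eqref{estnormp}. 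A fixed point of $T$ is precisely a solution of $Lu-\Delta u+\lambda u=f$, so everything reduces to showing $T$ is a contraction on a suitable complete norm.

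Given $v_1,v_2$ and setting $u_i=T(v_i)$, the difference $u_1-u_2$ solves $-\Delta(u_1-u_2)+\lambda(u_1-u_2)=-L(v_1-v_2)$. Applying \eqref{estnorm2p} and \eqref{estnormp} to $u_1-u_2$ and then Lemma \ref{L<12Lp} to $L(v_1-v_2)$, one obtains for every $\varepsilon>0$
\begin{align*}
\|u_1-u_2\|_{2,p}&\le C\bigl(o(\varepsilon)\|v_1-v_2\|_{2,p}+\tau(\varepsilon)\|v_1-v_2\|_p\bigr),\\
\|u_1-u_2\|_{p}&\le \frac{C}{\lambda-\lambda_0}\bigl(o(\varepsilon)\|v_1-v_2\|_{2,p}+\tau(\varepsilon)\|v_1-v_2\|_p\bigr).
\end{align*}

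The main obstacle is the factor $\tau(\varepsilon)$, which is unbounded as $\varepsilon\to 0$ and hence cannot be made small by shrinking $\varepsilon$ alone; a naive iteration in the plain $W^{2,p}$-norm therefore fails to contract. To sidestep this I introduce the equivalent norm $\|v\|_\ast\coloneqq\|v\|_{2,p}+M\|v\|_p$ and use the $\lambda$-decay provided by \eqref{estnormp} to absorb the troublesome $\tau(\varepsilon)\|v_1-v_2\|_p$ contribution. Concretely, first fix $\varepsilon>0$ so small that $Co(\varepsilon)\le\tfrac14$; then set $M=4C\tau(\varepsilon)$; finally choose $\widetilde{\lambda}=\lambda_0+2C\tau(\varepsilon)$, so that for every $\lambda\ge\widetilde{\lambda}$ both $M/(\lambda-\lambda_0)\le 2$ and $C\tau(\varepsilon)/(\lambda-\lambda_0)\le\tfrac12$ hold. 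Adding $M$ times the second estimate to the first, these choices yield
$$\|T(v_1)-T(v_2)\|_\ast \;\le\; \tfrac{3}{4}\,\|v_1-v_2\|_\ast,$$
so $T$ is a strict contraction on the Banach space $(W^{2,p}(\Omega)\cap W_0^{1,p}(\Omega),\|\cdot\|_\ast)$. Banach's fixed point theorem then supplies a unique fixed point $u\in W^{2,p}(\Omega)\cap W_0^{1,p}(\Omega)$, which is the desired unique solution of $Lu-\Delta u+\lambda u=f$.
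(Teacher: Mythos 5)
Your proof is correct and follows essentially the same route as the paper: the same fixed-point operator $T_\lambda$, the same use of \eqref{estnorm2p}, \eqref{estnormp}, and Lemma \ref{L<12Lp}, and a weighted $W^{2,p}$-norm to absorb the unbounded $\tau(\varepsilon)$ term by taking $\lambda$ large. The only cosmetic differences are that you use a fixed weight $M=4C\tau(\varepsilon)$ instead of the paper's $\lambda$-dependent weight $(\lambda-\lambda_0)$ in the equivalent norm, and you bypass the paper's auxiliary parameter $\delta$ (which the paper introduces only because it applies an intermediate $\|\cdot\|_{1,p}$ form of Lemma \ref{L<12Lp} and then interpolates).
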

\begin{proof}
	For any $w \in W^{2,p}(\Omega)\cap W_0^{1,p}(\Omega)$, Lemma \ref{L<12Lp} guarantees that $f - Lw \in L^p(\Omega)$. Therefore, the problem
	\begin{equation}\label{solw}
		-\Delta u + \lambda u = f - Lw, \quad \text{in } \Omega,
	\end{equation}
	admits a unique solution $u \in W^{2,p}(\Omega)\cap W_0^{1,p}(\Omega)$. This allows us to define the operator
	$$
	T_\lambda : W^{2,p}(\Omega)\cap W_0^{1,p}(\Omega) \longrightarrow W^{2,p}(\Omega)\cap W_0^{1,p}(\Omega),
	$$
	by setting $T_\lambda w = u$, where $u$ is the unique solution of \eqref{solw}.
	
	The strategy is then to show that $T_\lambda$ is a contraction. Once this is established, the Banach fixed point theorem yields the existence and uniqueness of the desired solution.
	
	Indeed, let $w_1,w_2 \in W^{2,p}(\Omega)\cap W_0^{1,p}(\Omega)$ and $T_\lambda w_1 =u_1$ and $T_\lambda w_2 =u_2$. Then,
		$$-\Delta\big(u_1-u_2\big) + \lambda(u_1-u_2) = -L\big(w_1-w_2\big), \quad \mbox{in}\ \Omega.$$
	Let $\delta \in (0,1)$ be chosen later. By \eqref{estnorm2p} and Lemma \ref{L<12Lp}, we obtain
		\begin{eqnarray}\label{u1u2A}
			\|u_1-u_2\|_{2,p} &\le& C \|L(w_1-w_2)\|_p \nonumber\\
			&\le& C\left(\big(o(\varepsilon) + \tau(\varepsilon)\delta\big)\|w_1-w_2\|_{2,p} + \dfrac{\tau(\varepsilon)}{\delta}\|w_1-w_2\|_p\right).
		\end{eqnarray}
	By the same reasoning, but using \eqref{estnormp} instead of \eqref{estnorm2p}, we deduce
		\begin{equation}\label{u1u2B}
			(\lambda-\lambda_0)\|u_1-u_2\|_p \le C\left(\big(o(\varepsilon) + \tau(\varepsilon)\delta\big)\|w_1-w_2\|_{2,p} + \dfrac{\tau(\varepsilon)}{\delta}\|w_1-w_2\|_p\right).
		\end{equation}
	Since the constant $C$ is independent of $\lambda$, we may introduce the norm (equivalent to the standard one) in $W^{2,p}(\Omega)\cap W_0^{1,p}(\Omega)$ defined by
		$$\|\cdot\|_{\widetilde{2,p}}\coloneqq \|\cdot\|_{2,p} + (\lambda-\lambda_0)\|\cdot\|_p.$$
	Let $\gamma \in (0,1)$ be fixed. We first choose $\varepsilon > 0$ sufficiently small so that
		$$C\big(o(\varepsilon) + \tau(\varepsilon)\delta\big) \le \gamma.$$
	Then we take $\widetilde{\lambda} > 0$ large enough such that
		$$C\dfrac{\tau(\varepsilon)}{\delta} \le (\widetilde{\lambda} -\lambda_0)\gamma.$$
	It follows from \eqref{u1u2A} and \eqref{u1u2B} that for all $\lambda \geq \widetilde{\lambda}$,
		\begin{eqnarray*}
			\|T_\lambda w_1 - T_\lambda w_2\|_{\widetilde{2,p}} &=& \|u_1-u_2\|_{2,p} + (\lambda -\lambda_0)\|u_1-u_2\|_p \\
			&\le& C\left(\big(o(\varepsilon) + \tau(\varepsilon)\delta\big)\|w_1-w_2\|_{2,p} + \dfrac{\tau(\varepsilon)}{\delta}\|w_1-w_2\|_p\right)\\
			&\le& \gamma \Big(\|w_1-w_2\|_{2,p} + (\lambda-\lambda_0)\|w_1-w_2\|_p\Big) \\
			&=&\gamma \|w_1-w_2\|_{\widetilde{2,p}}.
		\end{eqnarray*}
	Therefore, $T_\lambda$ is a contraction, and the result follows by the Banach fixed point theorem.
\end{proof}

For the case $s \in (\frac{1}{2},1)$ we have the following:
\begin{lemma}\label{existence>1/2}
	Let $\Omega\subset\mathbb{R}^n$ be a $C^{1,1}$ domain, and let $s \in (\frac{1}{2},1)$, $f\in L^p(\Omega)$, with $n<p<\frac{n}{2s-1}$. Assume that $L$ is any operator of the form \eqref{defL} and \eqref{condL}. Then, there exists $\widehat{\lambda}>0$ sufficiently large and independent of $f$, such that the problem
	$$Lu -\Delta u + \lambda u = f, \quad \mbox{in} \ \Omega,$$
	admits a unique solution $u \in W^{2,p}(\Omega)\cap W_0^{1,p}(\Omega)$ for every $\lambda\ge \widehat{\lambda}$.
\end{lemma}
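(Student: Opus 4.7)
The plan is essentially a verbatim repetition of the strategy used in Lemma \ref{existence<1/2}: the role played by Lemma \ref{L<12Lp} there is now taken over by Lemma \ref{L>12Lp}, which is tailored precisely to the range $n<p<\frac{n}{2s-1}$ assumed in the statement. The only point to watch is that the whole fixed point argument must remain consistent with this restricted range of exponents, and this is automatic since every place where an $L^p$ estimate on $Lu$ is needed invokes Lemma \ref{L>12Lp} with the admissible $p$.

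More concretely, I would fix $\lambda\ge\lambda_0$, where $\lambda_0$ is the threshold appearing in \eqref{estnorm2p}--\eqref{estnormp} for the standard Dirichlet theory of $-\Delta+\lambda$ in $L^p(\Omega)$, and define
$$T_\lambda:W^{2,p}(\Omega)\cap W^{1,p}_0(\Omega)\longrightarrow W^{2,p}(\Omega)\cap W^{1,p}_0(\Omega),\qquad T_\lambda w\coloneqq u,$$
where $u$ is the unique solution of $-\Delta u+\lambda u=f-Lw$ in $\Omega$ with zero Dirichlet boundary data. Lemma \ref{L>12Lp} guarantees that $Lw\in L^p(\Omega)$, so the right-hand side lies in $L^p(\Omega)$ and $T_\lambda$ is well defined, with $u$ satisfying the estimates \eqref{estnorm2p} and \eqref{estnormp}.

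For $w_1,w_2$ with $u_i=T_\lambda w_i$, the difference satisfies
$$-\Delta(u_1-u_2)+\lambda(u_1-u_2)=-L(w_1-w_2)\quad\text{in }\Omega.$$
Combining \eqref{estnorm2p}, \eqref{estnormp} with Lemma \ref{L>12Lp}, and splitting the $L^p$ norm on the right via a Young-type parameter $\delta>0$ exactly as in \eqref{u1u2A}--\eqref{u1u2B}, one obtains
$$\|u_1-u_2\|_{2,p}\le C\Big(\big(o(\varepsilon)+\tau(\varepsilon)\delta\big)\|w_1-w_2\|_{2,p}+\tfrac{\tau(\varepsilon)}{\delta}\|w_1-w_2\|_p\Big),$$
$$(\lambda-\lambda_0)\|u_1-u_2\|_p\le C\Big(\big(o(\varepsilon)+\tau(\varepsilon)\delta\big)\|w_1-w_2\|_{2,p}+\tfrac{\tau(\varepsilon)}{\delta}\|w_1-w_2\|_p\Big),$$
with $C$ independent of $\lambda$.

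Finally, I would endow $W^{2,p}(\Omega)\cap W^{1,p}_0(\Omega)$ with the equivalent norm $\|\cdot\|_{\widetilde{2,p}}\coloneqq\|\cdot\|_{2,p}+(\lambda-\lambda_0)\|\cdot\|_{p}$ introduced in the proof of Lemma \ref{existence<1/2}, fix $\gamma\in(0,1)$, and choose first $\varepsilon,\delta>0$ so small that $C(o(\varepsilon)+\tau(\varepsilon)\delta)\le\gamma$, and then $\widehat{\lambda}$ so large that $C\tau(\varepsilon)/\delta\le(\widehat{\lambda}-\lambda_0)\gamma$. Summing the two displays above yields $\|T_\lambda w_1-T_\lambda w_2\|_{\widetilde{2,p}}\le\gamma\|w_1-w_2\|_{\widetilde{2,p}}$ for all $\lambda\ge\widehat{\lambda}$, so $T_\lambda$ is a strict contraction, and Banach's fixed point theorem provides the unique solution $u$ of $Lu-\Delta u+\lambda u=f$ in $W^{2,p}(\Omega)\cap W^{1,p}_0(\Omega)$. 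The potential obstacle—whether the constraint $n<p<n/(2s-1)$ causes any breakdown in the interpolation or in the fixed point scheme—does not actually materialize, because Lemma \ref{L>12Lp} was proved precisely to supply the analogue of Lemma \ref{L<12Lp} in this $p$-range; consequently the entire argument transfers mutatis mutandis.
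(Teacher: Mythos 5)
Your proposal reproduces exactly the paper's argument: the paper states that the proof follows \emph{ipsis litteris} the one of Lemma \ref{existence<1/2}, with Lemma \ref{L>12Lp} used in place of Lemma \ref{L<12Lp}, and that is precisely the fixed-point scheme you carry out. Your write-up simply spells out the details the paper leaves implicit, so it is correct and takes the same approach.
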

\begin{proof}
	The proof follows {\it ipsis litteris} the argument used in Lemma \ref{existence<1/2}, with the only difference that we apply Lemma \ref{L>12Lp} in place of Lemma \ref{L<12Lp}.
\end{proof}

\medskip\noindent	
\begin{tabular}{|l|}
	\hline
	Step three\\
	\hline
\end{tabular}

In this final step, we turn to the proof of Theorem \ref{W2p-reg}. As a preliminary step, we first establish a few auxiliary results concerning the Maximum Principle.

\begin{lemma}\label{linftyestimate}
	Let $\Omega \subset \mathbb{R}^n$ be a $C^{1,1}$-domain, $s \in (0,1)$, and $L$ be any operator of the form \eqref{defL} and \eqref{condL}. Assume that $g \in L^\infty(\Omega)$ and $\lambda \ge \max \{\widetilde{\lambda},\widehat{\lambda}\}$, where $\widetilde{\lambda}$ and $\widehat{\lambda}$ are given in Lemmas \ref{existence<1/2} and \ref{existence>1/2}, respectively. Then, the problem
		$$Lu - \Delta u + \lambda u = g, \quad \mbox{in}\ \Omega,$$
	has a unique solution $u\in W^{2,p}(\Omega)\cap W_0^{1,p}(\Omega)$, with $p$ satisfying \eqref{rangep}, and
		$$\|u\|_\infty \le C_\lambda \|g\|_\infty,$$
	where $C_\lambda <\dfrac{2}{\lambda}$.
\end{lemma}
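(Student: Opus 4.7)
The argument splits into two stages: first invoke the fixed-point existence from Step two, then derive the sup-norm bound through a truncation argument that mirrors the proof of Theorem \ref{maxprin}.

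For existence and uniqueness, the plan is to observe that since $\Omega$ is bounded, $g \in L^\infty(\Omega)$ lies in every $L^p(\Omega)$, and in particular in the range \eqref{rangep}. Accordingly, Lemma \ref{existence<1/2} for $s \in (0,\tfrac{1}{2}]$, or Lemma \ref{existence>1/2} for $s \in (\tfrac{1}{2},1)$, delivers the unique $u \in W^{2,p}(\Omega) \cap W_0^{1,p}(\Omega)$ solving $Lu - \Delta u + \lambda u = g$ as soon as $\lambda \ge \max\{\widetilde{\lambda}, \widehat{\lambda}\}$.

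For the $L^\infty$ estimate, I would compare $u$ with the constant $M \coloneqq \|g\|_\infty/\lambda$. Since $L$ annihilates constants, $u - M$ formally satisfies $L(u-M) - \Delta(u-M) + \lambda(u - M) = g - \|g\|_\infty \le 0$ in $\Omega$, with $u - M \le 0$ in $\mathbb{R}^n \setminus \Omega$. I would then test the weak equation for $u$ against the nonnegative cutoff $\psi \coloneqq (u - M)_+$, which is compactly supported in $\overline{\Omega}$ and lies in $H^1_0(\Omega)$. Writing $u = M + \psi - (u-M)_-$, on $\{\psi > 0\}$ one has $\nabla u = \nabla\psi$ and $u\psi = \psi^2 + M\psi$. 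Writing the nonlocal term in its symmetric form and using that $\psi$ and $(u-M)_-$ have disjoint supports (exactly as in the proof of Theorem \ref{maxprin}), I expect the bound $\int_{\mathbb{R}^n} u \, L\psi\, dx \ge 0$. Gathering these contributions and using $\lambda M = \|g\|_\infty$ yields
$$0 \le \int_{\mathbb{R}^n} u \, L\psi\, dx + \int_\Omega |\nabla \psi|^2\, dx + \lambda \int_\Omega \psi^2\, dx = \int_\Omega (g - \|g\|_\infty)\psi\, dx \le 0,$$
forcing $\psi \equiv 0$, i.e.\ $u \le M$ a.e.\ in $\Omega$. Applying the same reasoning to $-u$ gives $u \ge -M$, and the bound $\|u\|_\infty \le M = \tfrac{1}{\lambda}\|g\|_\infty < \tfrac{2}{\lambda}\|g\|_\infty$ follows at once.

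The main technical point I expect to need care with is the admissibility of $\psi$ as a test function in the weak formulation, especially when $p < 2$ in the range $s \in (0,\tfrac{1}{2}]$. I would handle this by truncating to $\psi_k \coloneqq \min\{\psi, k\} \in H^1_0(\Omega)\cap L^\infty$, carrying out the above computation with $\psi_k$, and passing to the limit $k \to \infty$ via monotone convergence; all the nonlocal bookkeeping reduces to standard bilinear-form bounds for $L$ on $H^1_0$, and the disjoint-support sign argument that drives the estimate is unaffected.
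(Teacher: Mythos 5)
Your proposal is correct, and it takes a genuinely different and noticeably simpler route than the paper.

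The paper's proof proceeds in two auxiliary stages: Lemma \ref{vcinfty} builds an exponential barrier $w$ satisfying $Lw - \Delta w \le 1$, $w > 0$ in $\overline\Omega$, and Lemma \ref{=1linftyestimate} applies the convex transformation $\varphi(t) = \frac{1}{\lambda}(1+e^{-\lambda t})$ to $w$ and the comparison principle to bound the solution $v_\lambda$ of the normalized problem $Lv-\Delta v+\lambda v = 1$ by $\|v_\lambda\|_\infty < \tfrac{2}{\lambda}$; the lemma at hand then follows by comparing $u/\|g\|_\infty$ against $v_\lambda$. You instead compare $u$ directly with the constant barrier $M = \|g\|_\infty/\lambda$, exploiting that both $L$ and $\Delta$ annihilate constants so that the zero-order term $\lambda M = \|g\|_\infty$ alone absorbs the right-hand side, and then run the $(u-M)_+$ energy test (the disjoint-support trick from the proof of Theorem \ref{maxprin}). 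This sidesteps the entire barrier/convexity construction, and it even yields the sharper constant $C_\lambda = 1/\lambda < 2/\lambda$. Two small refinements: first, the central claim $\int_{\mathbb{R}^n} u\,L\psi\,dx \ge 0$ deserves to be written out rather than asserted — namely $\int u\,L\psi = \int(u-M)\,L\psi = \int\psi\,L\psi - \int(u-M)_-\,L\psi$, where $\int\psi\,L\psi \ge 0$ is the nonnegative quadratic form and $\int(u-M)_-\,L\psi \le 0$ by the disjoint supports of $\psi=(u-M)_+$ and $(u-M)_-$. Second, the truncation step for $p<2$ is not actually needed: since $g\in L^\infty(\Omega)$ and $\Omega$ is bounded, $g$ lies in every $L^p$, so you may invoke Lemmas \ref{existence<1/2}--\ref{existence>1/2} with $p\ge\max\{2,n\}$; uniqueness forces the solutions for different exponents to coincide, giving $u\in W^{2,p}\hookrightarrow H^1\cap C^0(\overline\Omega)$ directly, whence $\psi=(u-M)_+$ has compact support in $\Omega$ and is an admissible test function in $H^1_0(\Omega)$.
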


We observe that, since $\Omega$ is a bounded domain, existence and uniqueness in the previous lemma follow directly from Lemma \ref{existence<1/2} or Lemma \ref{existence>1/2}, depending on the range of $s$. Hence, it remains to establish the $L^\infty$-norm estimate. The proof of this estimate will be based on the following two lemmas.

\begin{lemma}\label{vcinfty}
	Let $L$ be any operator of the form \eqref{defL} and \eqref{condL}, with $s \in (0,1)$. There exists $w\in C^\infty(\overline{\Omega})\cap W^{1,\infty}(\mathbb{R}^n)$ such that
		$$\left\{
		\begin{array}{rccl}
			Lw - \Delta w&\le& 1, & \mbox{in} \ \Omega; \\
			w&>& 0, & \mbox{in}\ \overline{\Omega};\\
			w&\ge& 0, & \mbox{in} \ \mathbb{R}^n. 	
		\end{array}
		\right.$$
\end{lemma}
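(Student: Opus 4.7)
The plan is to construct an explicit barrier of the form $w = c\phi$, where $c > 0$ is any positive constant and $\phi$ is a smooth bump function that equals $1$ on a neighborhood of $\overline{\Omega}$. More precisely, I would pick a bounded open set $\Omega'$ with $\overline{\Omega} \subset \Omega'$ together with a function $\phi \in C_0^\infty(\mathbb{R}^n)$ satisfying $0 \le \phi \le 1$ on $\mathbb{R}^n$ and $\phi \equiv 1$ on $\overline{\Omega'}$. Setting $w \coloneqq c\phi$, the properties $w \in C^\infty(\overline{\Omega})\cap W^{1,\infty}(\mathbb{R}^n)$, $w > 0$ on $\overline{\Omega}$, and $w \ge 0$ on $\mathbb{R}^n$ are immediate.

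To verify the differential inequality $Lw - \Delta w \le 1$ in $\Omega$, I would argue separately for the local and nonlocal pieces. For the local part, since $w$ is identically equal to $c$ on the open neighborhood $\Omega'$ of $\overline{\Omega}$, the classical Laplacian vanishes: $\Delta w \equiv 0$ throughout $\Omega$. For the nonlocal part, fix $x \in \Omega$ and note that by construction $w$ attains its global maximum value $c$ at $x$. Hence for every direction $\theta \in S^{n-1}$ and every $r \in \mathbb{R}$,
$$w(x+\theta r) + w(x-\theta r) - 2w(x) = \bigl(w(x+\theta r) - c\bigr) + \bigl(w(x-\theta r) - c\bigr) \le 0.$$
The smoothness and compact support of $w$, combined with the ellipticity bounds \eqref{condL} on $\mu$, ensure that the defining integral for $Lw(x)$ converges absolutely (the integrand is dominated by $C\min\{|r|^2,1\}/|r|^{1+2s}$). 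Integrating the non-positive numerator against the positive kernel $|r|^{-1-2s}\,dr\,d\mu(\theta)$ then yields $Lw(x) \le 0$. Combining the two estimates gives $Lw(x) - \Delta w(x) \le 0 \le 1$ in $\Omega$, as required.

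The construction is entirely explicit, so I do not anticipate any real obstacle. The only mild subtlety worth flagging is the need to pick an intermediate neighborhood $\Omega'$ strictly between $\overline{\Omega}$ and the support of $\phi$, so that $w$ is constant on an \emph{open} set containing $\overline{\Omega}$; this is precisely what forces $\Delta w$ to vanish pointwise throughout $\Omega$ and what makes the maximum-value comparison for the nonlocal piece work at every interior point without any boundary analysis.
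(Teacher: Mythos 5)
Your proof is correct and takes a genuinely different, more elementary route than the paper's. The paper fits $\overline{\Omega}$ inside an annulus $B_{3R/4}\setminus B_{R/4}$ and uses a radially symmetric concave barrier $w(x)=1-e^{\beta(|x|^2-R^2)}$ truncated outside $B_R$, then splits the nonlocal integral over $\{|r|\le R/4\}$ (where concavity on $B_R$ forces the numerator $\le 0$) and $\{|r|>R/4\}$ (which it bounds by $C e^{\beta(|x|^2-R^2)}$), and finally chooses $\beta$ large so that the whole expression $Lw-\Delta w$ is at most $1$. You instead take $w=c\phi$ for a bump function $\phi$ that is $\equiv 1$ on an open neighborhood $\Omega'\supset\overline\Omega$, which makes $\Delta w\equiv 0$ in $\Omega$ and, since $w$ is at its global maximum on $\Omega$, makes the second-difference numerator pointwise nonpositive so that $Lw\le 0$ in $\Omega$ with no case analysis and no parameter to tune; you even obtain the stronger bound $Lw-\Delta w\le 0$. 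An additional benefit worth noting: your barrier satisfies $\nabla w\equiv 0$ on $\Omega$, which makes the term $\lambda|\nabla w|^2$ that appears in the chain-rule expansion of $\Delta\varphi(w)$ in the proof of Lemma \ref{=1linftyestimate} vanish identically on $\Omega$; with the paper's barrier this term is present and must be absorbed, so your construction simplifies (and insulates from any sign subtleties in) the downstream comparison argument as well.
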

\begin{proof}
	Since $\Omega$ is a bounded domain in $\mathbb{R}^n$, there exist a point $x_0 \in \mathbb{R}^n$ and a constant $R>0$ such that $\overline{\Omega} \subset B_{\frac{3}{4}R}(x_0) \setminus B_{\frac{1}{4}R}(x_0)$. Without loss of generality, we may assume $x_0=0$. For convenience, set $B_R = B_R(0)$. Now, define the auxiliary function
		$$w(x) \coloneqq\left\{
		\begin{array}{rl}
			1-e^{\beta(|x|^2-R^2)}, &\mbox{if}\ |x|\le R; \\
			0, &\mbox{if} \ |x|>R,
		\end{array}
		\right.$$
	where the parameter $\beta > 0$ will be fixed later. We can see, that $w \in C^\infty(\overline{\Omega})\cap W^{1,\infty}(\mathbb{R}^n)$, as well as, $w> 0$ in $\Omega$ and $w\ge0$ in $\mathbb{R}^n$. Further, by direct computations we know that
		$$-\Delta w(x) = e^{\beta(|x|^2-R^2)}\big(2n\beta + 4\beta^2|x|^2\big), \quad \mbox{for any} \ x \in \Omega.$$
	Now, we turn to estimating the fractional part, namely the quantity $Lw$. First, observe that if $x \in \Omega$ and $|r|\le \tfrac{R}{4}$, then for every $\theta \in S^{n-1}$ we have $x \pm \theta r \in B_R$. Then, since $w$ is concave on $\overline{B_R}$, for any $x \in \Omega$, $\theta \in S^{n-1}$, and $|r|\le \frac{R}{4}$ we have $w(x+\theta r)+w(x-\theta r) \le 2w(x)$. This implies that the contribution to $Lw$ from the region $\{|r|\le \tfrac{R}{4}\}$ is non-positive. Consequently, for any $x \in \Omega$ we obtain
		\begin{eqnarray*}
			Lw(x) &=& \int_{S^{n-1}}\int_{-\infty}^\infty \dfrac{w(x+\theta r) + w(x-\theta r)-2w(x)}{|r|^{1+2s}}\ dr\ d\mu \\
			&\le& \int_{S^{n-1}}\int_{\{|r|>\frac{R}{4}\}} \dfrac{w(x+\theta r) + w(x-\theta r)-2w(x)}{|r|^{1+2s}}\ dr\ d\mu \\
			&=& \int_{S^{n-1}}\int_{\{|r|>\frac{R}{4}\}} \dfrac{w(x+\theta r) -w(x)}{|r|^{1+2s}}\ dr\ d\mu + \int_{S^{n-1}}\int_{\{|r|>\frac{R}{4}\}} \dfrac{w(x-\theta r) -w(x)}{|r|^{1+2s}}\ dr\ d\mu \\
			&\eqcolon& A + B.
		\end{eqnarray*}
		
	Next, we rewrite the first term $A$ by splitting the integration domain as follows
		\begin{eqnarray*}
			A &=& \int_{S^{n-1}}\int_{\{|r|>\frac{R}{4}\}\cap\{|x+\theta r|\le |x|\}} \dfrac{w(x+\theta r) -w(x)}{|r|^{1+2s}}\ dr\ d\mu \\
			&& + \int_{S^{n-1}}\int_{\{|r|>\frac{R}{4}\}\cap\{|x+\theta r|> |x|\}} \dfrac{w(x+\theta r) -w(x)}{|r|^{1+2s}}\ dr\ d\mu.
		\end{eqnarray*}
	For the first integral, it suffices to observe that, by the very definition of $w$, we have
		\begin{eqnarray*}
			\int_{S^{n-1}}\int_{\{|r|>\frac{R}{4}\}\cap\{|x+\theta r|\le |x|\}} \dfrac{w(x+\theta r) -w(x)}{|r|^{1+2s}}\ dr\ d\mu &\le& \int_{S^{n-1}}\int_{\{|r|>\frac{R}{4}\}} \dfrac{e^{\beta(|x|^2-R^2)}}{|r|^{1+2s}}\ dr\ d\mu \\
			&\le& c(\Lambda_1,s,R) e^{\beta(|x|^2-R^2)}.
		\end{eqnarray*}
	For the second integral, we distinguish two different situations depending on the position of the modulus of $x+\theta r$.
		\begin{itemize}
			\item {\bf Case 1:} $|x|\le |x+\theta r|\le R$.
			
				In this situation, both $x$ and $x+\theta r$ lie inside $B_R$, which implies, by the monotonicity of $e^{(\cdot)}$, that
					$$w(x+\theta r) - w(x) = e^{\beta(|x|^2 -R^2)} - e^{\beta(|x+\theta r|^2 -R^2)}  \le 0.$$
					
			\item {\bf Case 2:} $|x|\le R < |x+\theta r|$.
			
				Here, by the definition of $w$ we compute
					$$w(x+\theta r)-w(x) = e^{\beta(|x|^2-R^2)} -1  \le e^{\beta(|x|^2-R^2)}.$$
		\end{itemize}		
	Collecting the two cases, we deduce that
		\begin{eqnarray*}
			\int_{S^{n-1}}\int_{\{|r|>\frac{R}{4}\}\cap\{|x+\theta r|> |x|\}} \dfrac{w(x+\theta r) -w(x)}{|r|^{1+2s}}\ dr\ d\mu &\le& \int_{S^{n-1}} \int_{\{|r|>\frac{R}{4}\}} \dfrac{ e^{\beta(|x|^2-R^2)}}{|r|^{1+2s}}\ dr \ d\mu \\
			&\le& c(\Lambda_1,s,R)  e^{\beta(|x|^2-R^2)}.
		\end{eqnarray*}
	Therefore, combining with the estimate for the first integral, we obtain
		$$A \le c(\Lambda_1,s,R) e^{\beta(|x|^2-R^2)}.$$
	By a completely analogous reasoning, one can show that $B \le c(\Lambda_1,s,R) e^{\beta(|x|^2-R^2)}$. Consequently, for any $x \in \Omega$ we have
		$$L w(x) \le c(\Lambda_1,s,R) e^{\beta(|x|^2-R^2)}, \quad \mbox{for any} \ x \in \Omega.$$
		
	Finally, since in our region of interest $\Omega$ we have $\tfrac{1}{4}R \le |x| \le \tfrac{3}{4}R$, it follows that
		\begin{eqnarray*}
			Lw(x) - \Delta w &\le& c(\Lambda_1,s,R) e^{\beta(|x|^2-R^2)} + e^{\beta(|x|^2-R^2)}\Big(2n\beta + 4\beta^2|x|^2\Big) \\
			&=& e^{\beta(|x|^2-R^2)}\Big(2n\beta + 4\beta^2|x|^2 + c(\Lambda_1,s,R) \Big) \\
			&\le&e^{-\frac{7\beta R^2}{16}}\left(2n\beta +\dfrac{9\beta^2R^2}{4} + c(\Lambda_1,s,R)\right).
		\end{eqnarray*}
	Thus, by choosing $\beta$ sufficiently large, we ensure that the right-hand side is less than or equal to $1$, which concludes the proof.
\end{proof}

\begin{lemma}\label{=1linftyestimate}
	Let $L$ be any operator of the form \eqref{defL} and \eqref{condL}, with $s \in (0,1)$. Then, the problem
		$$Lu - \Delta u + \lambda u =1, \quad \mbox{in} \ \Omega,$$
	has a unique solution $v_\lambda \in W^{2,p}(\Omega)\cap L^\infty(\Omega)$, with $p$ satisfying \eqref{rangep}, and $\|v_\lambda\|_\infty \le \frac{2}{\lambda}$.
\end{lemma}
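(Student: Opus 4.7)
The plan is to combine the existence theorems just proved with a Stampacchia-type truncation, applied directly to the equation, to obtain the $L^\infty$ bound without constructing a separate barrier. Since $f\equiv 1\in L^p(\Omega)$ for every $p>1$ on the bounded domain $\Omega$, Lemma \ref{existence<1/2} (if $s\in(0,\tfrac12]$) or Lemma \ref{existence>1/2} (if $s\in(\tfrac12,1)$) produces a unique $v_\lambda\in W^{2,p}(\Omega)\cap W_0^{1,p}(\Omega)$ for each $p$ in the range \eqref{rangep}. By uniqueness and the nestedness of Sobolev spaces on the bounded set $\Omega$, we may fix $p>n$ (available in both regimes), so $v_\lambda\in C^1(\overline{\Omega})$; in particular, its extension by zero lies in $H^1_0(\Omega)\subset H^1(\mathbb{R}^n)$.

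For the lower bound $v_\lambda\ge 0$, I would test the weak form of $\mathcal{E}v_\lambda+\lambda v_\lambda=1$ with $\eta=(v_\lambda)_-\in H^1_0(\Omega)$, reproducing the computation in the proof of Theorem \ref{maxprin}: the nonlocal term $\int v_\lambda L(v_\lambda)_-\le 0$ follows from the symmetrization identity combined with the disjointness of the supports of $(v_\lambda)_\pm$, while $\int \nabla v_\lambda\cdot\nabla(v_\lambda)_-=-\|\nabla(v_\lambda)_-\|_{L^2}^2\le 0$ and $\lambda\int v_\lambda(v_\lambda)_-=-\lambda\|(v_\lambda)_-\|_{L^2}^2\le 0$. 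Since the right-hand side $\int_\Omega(v_\lambda)_-\,dx$ is non-negative, the identity forces $(v_\lambda)_-\equiv 0$.

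For the upper bound $v_\lambda\le 2/\lambda$, set $M=2/\lambda$ and test with $\eta=(v_\lambda-M)_+\in H^1_0(\Omega)$ (admissible since $v_\lambda=0<M$ on $\partial\Omega$). Writing $v_\lambda=(v_\lambda-M)_+ +\min(v_\lambda,M)$ and expanding the symmetrized bilinear form, the diagonal piece is the Dirichlet energy of $(v_\lambda-M)_+$ and is $\ge 0$, while the cross piece is non-negative by a four-case analysis on whether $v_\lambda(x)$ and $v_\lambda(y)$ lie above or below $M$; hence $\int v_\lambda L(v_\lambda-M)_+\ge 0$. Combined with $\int|\nabla(v_\lambda-M)_+|^2\ge 0$ and $\lambda\int v_\lambda(v_\lambda-M)_+\ge \lambda M\int(v_\lambda-M)_+=2\int(v_\lambda-M)_+$, the weak equation yields
\[
2\int_\Omega(v_\lambda-M)_+\,dx\;\le\;\int_\Omega(v_\lambda-M)_+\,dx,
\]
which forces $(v_\lambda-M)_+\equiv 0$, i.e., $v_\lambda\le 2/\lambda$ almost everywhere.

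The main obstacle is verifying the sign of the cross term $\int \min(v_\lambda,M)\,L(v_\lambda-M)_+$ when the pair $(x,y)$ straddles $\partial\Omega$. This is resolved by the exterior condition: since $v_\lambda\equiv 0<M$ outside $\Omega$, both $(v_\lambda-M)_+$ and $\min(v_\lambda,M)$ vanish on $\mathbb{R}^n\setminus\Omega$, so the four-case analysis in the symmetrized form remains sign-coherent uniformly across $\partial\Omega$, exactly in the spirit of the proof of Theorem \ref{maxprin}.
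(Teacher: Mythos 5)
Your proof is correct, but it takes a genuinely different route from the paper's. The paper proceeds via a barrier argument: it invokes Lemma~\ref{vcinfty} to produce a supersolution $w$ with $Lw-\Delta w\le 1$, then applies the convex transform $\varphi(t)=\frac{1}{\lambda}(1+e^{-\lambda t})$ to show that $\varphi(w)$ is a supersolution of the full equation (using the convexity inequality $L\varphi(w)\ge -e^{-\lambda w}Lw$), and finally compares $v_\lambda$ to $\varphi(w)$ via the maximum principle (Theorem~\ref{maxprin}), using $\max\varphi(w)=\tfrac{2}{\lambda}$. You instead run a Stampacchia truncation directly on the equation, testing with $(v_\lambda)_-$ and $(v_\lambda-M)_+$ and exploiting the sign structure of the symmetrized bilinear form together with the zero-order term $\lambda v_\lambda$. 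Both reach the same conclusion. What your approach buys: it is self-contained (no need for Lemma~\ref{vcinfty}), and it proves the lower bound $v_\lambda\ge 0$ explicitly, which the paper's write-up leaves implicit even though it is needed to pass from $v_\lambda\le\tfrac{2}{\lambda}$ to $\|v_\lambda\|_\infty\le\tfrac{2}{\lambda}$. What the paper's barrier buys: Lemma~\ref{vcinfty} is used elsewhere in the paper's machinery, so the author gets the $L^\infty$ bound ``for free'' once that lemma is available, and the exponential-transform trick illustrates a robust comparison principle that does not rely on the specific form of the right-hand side. One detail worth noting in your upper bound: the estimate $\lambda\int v_\lambda(v_\lambda-M)_+\ge \lambda M\int(v_\lambda-M)_+$ holds because $v_\lambda>M$ wherever $(v_\lambda-M)_+>0$, and the factor $\lambda M=2$ is precisely what makes the final inequality $2\int(v_\lambda-M)_+\le\int(v_\lambda-M)_+$ collapse; the argument would not work with a sharper target $M<1/\lambda$, consistent with the fact that the trivial pointwise bound for the equation $Lu-\Delta u+\lambda u=1$ with $u\ge 0$ is only $u\le 1/\lambda$ plus a nonlocal correction.
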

\begin{proof}
	The existence and uniqueness of the solution, say $v_\lambda$, is guaranteed by Lemmas \ref{existence<1/2} and \ref{existence>1/2}, depending on the range of $s$. Furthermore, the Sobolev embedding theorem implies that $v_\lambda \in C^0(\overline{\Omega}) \cap L^\infty(\Omega)$. Also, by Lemma \ref{vcinfty}, it follows that there exists $w \in C^\infty(\overline{\Omega})\cap W^{1,\infty}(\mathbb{R}^n)$ with $w>0$ in $\Omega$,$w \ge 0$ in $\mathbb{R}^n$ and
		$$Lw-\Delta w \le 1 \quad \mbox{in} \ \Omega.$$
	Set $\varphi(t)=\frac{1}{\lambda}\big(1+e^{-\lambda t}\big)$, and observe that $\varphi$ is a convex function with maximum value $\max_{t\ge0}\varphi(t)=\frac{2}{\lambda}$. Hence, for any $x,y \in \mathbb{R}^n$ we have the convexity inequality
		$$\varphi(w(x)) - \varphi(w(y)) \ge \varphi'(w(y))\big(w(x)-w(y)\big).$$
	Let us compute $L\varphi(w(x))$ for $x \in \Omega$. Using the above inequality, we obtain
		\begin{eqnarray*}
			L\varphi(w(x)) &\ge& \int_{S^{n-1}}\int_{-\infty}^\infty \dfrac{\varphi'(w(x))\big(w(x+\theta r)-w(x)\big) + \varphi'(w(x))\big(w(x-\theta r)-w(x)\big)}{|r|^{1+2s}}\ dr \ d\mu \\
			&=& \varphi'(w(x))  \int_{S^{n-1}}\int_{-\infty}^\infty \dfrac{w(x+\theta r)+ w(x-\theta r)-2w(x)}{|r|^{1+2s}}\ dr \ d\mu \\
			&=& -e^{-\lambda w(x)} L w(x).
		\end{eqnarray*}
	As a consequence, since $Lw - \Delta w \le 1$, it follows that
		\begin{eqnarray}\label{ldlam}
			L\varphi (w(x)) - \Delta \varphi(w(x)) + \lambda \varphi(w(x)) &\ge& -e^{\lambda w(x)} \left(Lw(x) - \Delta w -1 - \lambda \sum_{i=1}^n \left(\dfrac{\partial w}{\partial x_i}\right)^2 \right) +1 \nonumber \\
			&\ge& 1.
		\end{eqnarray}
	On the other hand, recall that $v_\lambda$ satisfies $L v_\lambda - \Delta v_\lambda +\lambda v_\lambda =1$. Hence, comparing this with \eqref{ldlam}, we obtain that
		$$L\big(\varphi(w) - v_\lambda\big) - \Delta \big(\varphi(w) - v_\lambda\big) + \lambda \big(\varphi(w) - v_\lambda\big) \ge 0, \quad \mbox{in}\ \Omega.$$
	By the maximum principle (Theorem \ref{maxprin}), we get
		$$v_\lambda(x) \le \varphi(w(x)), \quad \mbox{a.e. in}\ \Omega.$$
	Finally, since $\max_{x\in\Omega} \varphi(w(x)) = \frac{2}{\lambda}$, we conclude that $\|v_\lambda\|_\infty \le \frac{2}{\lambda}$, which is the desired estimate.
\end{proof}

Finally, we are ready to prove Lemma \ref{linftyestimate}, as follows.
	
\begin{proof}[Proof of Lemma \ref{linftyestimate}]
	Observe that
		$$L\left(\dfrac{u}{\|g\|_\infty}\right) - \Delta \left(\dfrac{u}{\|g\|_\infty}\right) + \lambda \left(\dfrac{u}{\|g\|_\infty}\right) \le 1, \quad \mbox{in} \ \Omega.$$
	On the other hand, combining this inequality with Lemma \ref{=1linftyestimate}, we obtain
		$$L \left(v_\lambda - \dfrac{u}{\|g\|_\infty}\right) - \Delta \left(v_\lambda - \dfrac{u}{\|g\|_\infty}\right) + \lambda \left(v_\lambda - \dfrac{u}{\|g\|_\infty}\right) \ge 0, \quad \mbox{in} \ \Omega.$$
	By applying the Theorem \ref{maxprin} we get $\|u\|_\infty \le v_\lambda \|g\|_\infty$. Finally, the desired bound follows directly from the estimate of $\|v_\lambda\|_\infty$ provided in Lemma \ref{=1linftyestimate}.
\end{proof}	

Finally, we are able to prove the Theorem \ref{W2p-reg}, as follows.

\begin{proof}[Proof of Theorem \ref{W2p-reg}]
	Let us start by fixing $\lambda > \max\left\{\widetilde{\lambda},\widehat{\lambda}\right\}$, where the constants $\widetilde{\lambda}$ and $\widehat{\lambda}$ are given in Lemmas \ref{existence<1/2} and \ref{existence>1/2}, respectively.
	
	We define a sequence of functions $\{u_k\}$ as follows. Let $u_0$ be the unique solution of
		$$Lu_0 -\Delta u_0 + \lambda u_0 = f, \quad \mbox{in} \ \Omega,$$
	with $u_0 \in W^{2,p}(\Omega) \cap W_0^{1,p}(\Omega)$, provided by Lemmas \ref{existence<1/2} and \ref{existence>1/2}, according to the range of $s$. Since $\lambda u_0$ belongs to $L^p(\Omega)$, we may consider, as before, $u_1$ as the unique solution of
		$$Lu_1 -\Delta u_1 + \lambda u_1 = f + \lambda u_0, \quad \mbox{in} \ \Omega,$$
	with $u_1 \in W^{2,p}(\Omega) \cap W_0^{1,p}(\Omega)$. Proceeding inductively, for any $k \in \mathbb{N}$ we set $u_k$ as the unique solution of
		\begin{equation}\label{eqtopasslimit}
			Lu_{k} -\Delta u_{k} + \lambda u_{k} = f + \lambda u_{k-1}, \quad \mbox{in} \ \Omega,
		\end{equation}
	with $u_{k} \in W^{2,p}(\Omega) \cap W_0^{1,p}(\Omega)$, where we define.
	
	Next, by mimicking the proof of Theorem 4.1 in \cite{SVWZ}, we may ensure that
		$$\|u_{k+1}-u_k\|_\infty \le K \|u_{k}-u_{k-1}\|_\infty,$$
	for some constant $K \in (0,1)$. Consequently, the sequence $\{u_k\}$ is Cauchy in $L^\infty(\Omega)$ and, in particular, bounded.
	
	By Lemmas \ref{L<12Lp} and \ref{L>12Lp}, we know that $Lu_{k+1} \in L^p(\Omega)$ for any $s \in (0,1)$. Combining these lemmas with \eqref{estnorm2p} and \eqref{estnormp}, we obtain
		\begin{eqnarray*}
			\|u_{k+1}\|_{2,p} + (\lambda -\lambda_0)\|u_{k+1}\|_p &\le& 2\Big(\|f\|_p + \lambda \|u_k\|_p + \|Lu_{k+1}\|_p\Big) \\
			&\le&2C\Big(o(\varepsilon) \|u_{k+1}\|_{2,p} + \tau(\varepsilon)\|u_{k+1}\|_{1,p} + \|f\|_p + \lambda \|u_k\|_p\Big),
		\end{eqnarray*}
	where the constant $C>0$ is independent of both $\lambda$ and $u_k$. Now, fix any $\vartheta \in (0,1)$ independent of $k$. By choosing $\varepsilon>0$ sufficiently small, we may ensure that
		$$\|u_{k+1}\|_{2,p} + (\lambda -\lambda_0)\|u_{k+1}\|_p \le \vartheta\Big( \|u_{k+1}\|_{2,p} + (\lambda-\lambda_0) \|u_{k+1}\|_{p}\Big) + C\|f\|_p + C\lambda \|u_k\|_\infty,$$
	and then, it follows that the sequence $\{u_k\}$ is bounded in $W^{2,p}(\Omega) \cap W_0^{1,p}(\Omega)$. Thus, there exists a subsequence (not relabeled) converging weakly to some $u \in W^{2,p}(\Omega) \cap W_0^{1,p}(\Omega)$.
	
	Moreover, Lemmas \ref{L<12Lp} and \ref{L>12Lp} imply that $L$ is a linear bounded operator from $W^{2,p}(\Omega) \cap W_0^{1,p}(\Omega)$ into $L^p(\Omega)$. Therefore, for every $g \in L^{\frac{p}{p-1}}(\Omega)$,
		$$\int_\Omega gL u_k\ dx \to \int_\Omega gL u\ dx.$$
	In addition, standard weak convergence arguments ensure that
		$$\int_\Omega g(-\Delta u_k)\ dx \to \int_\Omega g(-\Delta u)\ dx,$$
	for all $g \in L^{\frac{p}{p-1}}(\Omega)$, as well. Passing to the limit of $k\to \infty$ in \eqref{eqtopasslimit}, the above results conclude that
		$$Lu -\Delta u = f \quad \mbox{in} \ \Omega.$$
	Finally, applying the Agmon-Douglis-Nirenberg estimate together with Lemmas \ref{L<12Lp} and \ref{L>12Lp}, and choosing $o(\varepsilon) \le \tfrac{1}{2C}$, we deduce
		$$\|u\|_{2,p} \le \mathrm{C}\Big(\|u\|_p + \|f\|_p\Big),$$
	as desired.
\end{proof}

Now we complete the proof of Theorem \ref{boundary_reg}.

\begin{proof}[Proof of Theorem \ref{boundary_reg}]
	Since $f \in L^\infty(\Omega)$, Theorem \ref{W2p-reg} yields that the problem
		$$Lu - \Delta u =f,$$
	has a unique solution $u \in W^{2,p}(\Omega)\cap W_0^{1,p}(\Omega)$ with $p$ satisfying \eqref{rangep}. Finally, by combining this with the Sobolev inequality, we obtain that $u \in C^{1,\gamma}(\overline{\Omega})$, with $\gamma>0$ satisfying
		$$\left\{
		\begin{array}{rc}
			\gamma \in (0,1), & \mbox{if} \ s\in (0,\frac{1}{2}]; \\
			\gamma \in (0,2-2s), & \mbox{if}\ s\in (\frac{1}{2},1),
		\end{array}
		\right.$$
	and the result follows.
\end{proof}

%
%

\end{document}